\newtheorem{teo}{Theorem}
\newtheorem{defi}[teo]{Definition}
\newtheorem{obs}[teo]{Remark}
\newtheorem{pro}[teo]{Proposition}
\newtheorem{cor}[teo]{Corollary}
\newcommand{\supp}{\mathrm{supp \,}}
\newcommand{\spa}{\mathrm{span \,}}
\newcommand{\ran}{\mathrm{ran \,}}
\newcommand{\mb}{\mathbb}
\newcommand{\mf}{\mathfrak}
\newcommand{\mc}{\mathcal}
\title{A Banach Space with a countable  infinite number of complex structures}
\author{\sc
W.  Cuellar Carrera }\thanks{This article is part of the  author's Ph.D  thesis   supervised by    Prof. V. Ferenczi  with support of FAPESP 2010/17512-6 \\ This paper was written during  a visit of the author at the ICMAT-Madrid supervised by Prof. J. Lopez-Abad and financed by FAPESP 2012/00631-8}
\address{Instituto de Matem\'atica e Estat\'istica Universidade de S\~ao Paulo\\ 	R. do Mat\~ao, 1010 - Butant\~a, S\~ao Paulo - Brazil}
\email{cuellar@ime.usp.br}
\begin{document}
\maketitle

\begin{abstract}
We give examples  of real Banach spaces with exactly infinite countably many complex structures and with $\omega_1$ many complex structures.
\end{abstract}
\section{Introduction} 
A real Banach   space  $X$ is said to admit a complex structure when there exists a linear operator  $I$ on $X$ such that $I^2=-Id$.  This turns $X$ into  a  $\mathbb C$-linear space by declaring a new law for the scalar multiplication:
\[  (\lambda + i\mu).x= \lambda x + \mu I(x)  \; \; \; (\lambda, \mu \in \mathbb R).
\]
Equipped with the equivalent norm
\[ \|x\|= \sup _{0\leq \theta\leq 2\pi} \| \cos \theta x+ \sin \theta Ix \|,
\]
we obtain a complex Banach space which will be denoted by $X^I$. The space $X^I$  is the complex structure of $X$ associated to the operator $I$, which is often referred itself as a complex structure for $X$. 

When the space $X$ is already a complex Banach space,  the operator $Ix=ix$  is a complex structure on $X_{\mathbb R}$ (i.e., $X$ seen as a real space) which generates $X$. Recall that for a complex Banach space $X$ its complex conjugate $\overline{X}$ is defined to be the space $X$ equipped with the new scalar multiplication $\lambda.x= \overline{\lambda}x$.

Two complex structures $I$ and $J$ on a real Banach space $X$ are equivalent if there exists a real automorphism $T$ on $X$  such that $TI=JT$. This is equivalent to saying  that the spaces $X^I$ and $X^J$ are $\mathbb C$-linearly isomorphic.  To see this, simply observe that  the relation $TI=JT$ actually means that  the operator $T$ is $\mathbb C$-linear as defined from $X^I$ to $X^J$.

We note that a complex structure $I$ on a real Banach space $X$ is an automorphism whose inverse is $-I$,  which is itself another complex structure on $X$. In fact, the   complex space $X^{-I}$ is   the complex conjugate space of $X^{I}$. Clearly  the spaces $X^{I}$ and $X^{-I}$ are always $\mathbb R$-linearly isometric. On the other hand, J. Bourgain \cite{B} and N. Kalton \cite{K} constructed examples of  complex  Banach spaces not  isomorphic to their corresponding complex conjugates, hence these spaces admit at least two different complex structures. Bourgain example is an $\ell_2$ sum of finite dimensional spaces whose distance to their conjugates tends to infinity. Kalton example is a twisted sum of two Hilbert spaces i.e., $X$  has a closed subspace $E$ such that $E$ and $X/E$ are Hilbertian, while $X$ itself is not isomorphic to a Hilbert space.

\

Complex structures do not always exist on Banach spaces. The first example in the literature  was the  James space,  proved by J. Dieudonn\'e \cite{D}. Other  examples of spaces without complex structures are the uniformly convex space constructed by S. Szarek  \cite{S}  and  the hereditary indecomposable space of  W. T. Gowers and B. Maurey  \cite{GM}. Gowers  \cite{G1,G2} also constructed a space  with unconditional basis but without complex structures. In general these spaces have few operators. For example, every operator on the Gowers-Maurey space  is a strictly singular perturbation of a multiple of the identity and this forbids complex structures:   suppose that $T$ is an operator on this space such that $T^2=-Id$ and write $T=\lambda Id+ S$ with $S$ a strictly singular operator. It follows that  $(\lambda^2+1)Id$ is  strictly singular  and of course this is impossible.

More examples of Banach spaces without complex structures were constructed by P. Koszmider, M. Mart\'in and J. Mer\'i \cite{KMM1, KMM2}. In fact, they introduced the notion of \emph{extremely non-complex Banach space}: A real Banach  space X is extremely non-complex if every bounded linear operator $T: X \rightarrow X$ satisfies the norm equality $ \|Id+ T^2\|= 1+ \|T\|^2$. Among their examples of extremely non complex spaces are  $C(K)$ spaces with few operators (e.g. when every bounded linear operator $T$ on $C(K)$ is of the form $T=gId+S$ where $g\in C(K)$ and $S$ is a weakly compact operator  on $C(K)$), a $C(K)$ space containing a complemented isomorphic copy of $\ell_{\infty}$ (thus having a richer space of operators than the first one mentioned) and an extremely non complex space  not isomorphic to any $C(K)$ space.  

\

Going back to the problem of uniqueness of complex structures,  Kalton proved that spaces whose complexification is a primary space have at most one complex structure \cite{FG}. In particular, the classical spaces $c_0$, $\ell_p$ $(1\leq p \leq \infty)$,  $L_p [0,1]$  $(1\leq p \leq \infty)$,  and $C[0,1]$ have a unique  complex structure.  

\

We have mentioned before examples of Banach spaces with at least two different complex structures. In fact, V. Ferenczi \cite{F} constructed  a space $X(\mb C)$ such that the complex structure $X(\mb C)^J$ associated to some operator $J$ and its conjugate are the only complex structures on $X(\mb C)$ up to isomorphism. Furthermore, every $\mb R$-linear operator $T$ on $X(\mb C)$ is of the form $T= \lambda Id + \mu J + S$, where $\lambda, \mu$ are reals and $S$ is strictly singular. Ferenczi also proved that the space $X(\mb C)^n$ has exactly $n+1$ complex structures for every positive integer $n$.   Going to the extreme,  R. Anisca \cite{A}  gave  examples of subspaces  of $L_p$ ($1\leq p <2$) which admit continuum many non-isomorphic complex structures. 

\

The question remains about finding examples of Banach spaces with exactly infinite countably many different complex structures. A first natural approach to solve this problem is  to construct an infinite sum of copies of $X(\mb C)$, and in  order to control the number of complex structures to take a regular sum,  for instance, $\ell_1(X(\mb C))$. It follows that every $\mb R$-linear bounded operator $T$ on $\ell_1(X(\mb C))$ is of the form $T= \lambda(T)+ S$, where $\lambda(T)$ is the scalar part of $T$, i.e., an infinite matrix of operators on $X(\mb C)$ of the form $\lambda_{i,j}Id+ \mu_{i,j}J$, and $S$ is an infinite matrix of strictly singular operators on $X(\mb C)$. It is easy to prove that if  $T$ is a complex structure then $\lambda(T)$ is also a complex structure. Recall  from \cite{F}  that two complex structures whose difference is strictly singular must be equivalent. Unfortunately, the operator $S$  in the representation of $T$ is not necessarily strictly singular, and this makes very difficult to understand the complex structures on $\ell_1(X(\mb C))$. 

\

It is necessary to consider  a more ``rigid" sum of copies of spaces like  $X(\mb C)$. We found this interesting property in the space $\mathfrak{X}_{\omega_1}$ constructed by S. Argyros, J. Lopez-Abad  and S. Todorcevic     \cite{ALT}.  Based on that construction we present a separable reflexive Banach space $\mathfrak{X}_{\omega^2}(\mathbb{C})$ with  exactly infinite countably many different  complex structures which  admits  an infinite dimensional Schauder decomposition $\mathfrak{X}_{\omega^2}(\mathbb{C})= \bigoplus_k \mathfrak{X}_{k}$ for which every $\mathbb R$-linear operator $T$ on $\mathfrak{X}_{\omega^2}(\mathbb{C})$ can be written as $T= D_T+ S$, where $S$ is strictly singular, $D_T |_{\mathfrak{X}_{k}}= \lambda_k Id_{\mathfrak{X}_{k}}$  $(\lambda_k \in \mathbb C)$ and  $(\lambda_k)_k$ is a convergent sequence. 

\

This construction also shows the existence of continuum many examples of  Banach spaces with the property of having exactly $\omega$ complex structures and the existence of a Banach space with exactly $\omega_{1}$ complex structures.

\section{ Construction of the space  $\mathfrak{X}_{\omega_1}(\mathbb{C})$ }

We construct a complex Banach space $\mathfrak{X}_{\omega_1}(\mathbb{C})$  with a bimonotone transfinite Schauder basis $(e_{\alpha})_{\alpha< \omega_{1}}$, such that every complex structure $I$ on $\mathfrak{X}_{\omega_1}(\mathbb{C})$ is of the form $I=D+S$, where $D$ is a suitable diagonal operator and $S$ is strictly singular. 

By a bimonotone transfinite Schauder basis we mean that $\mathfrak{X}_{\omega_1}(\mathbb{C}) = \overline{ \spa }(e_{\alpha})_{\alpha< \omega_{1}}$ and such that for every interval $I$ of $\omega_{1}$  the naturally defined map on the linear $\spa$ of $(e_{\alpha})_{\alpha<\omega_{1}}$
\[ \sum _{\alpha< \omega_{1}} \lambda_{\alpha} e_{\alpha}  \mapsto  \sum _{\alpha\in I} \lambda_{\alpha} e_{\alpha}
\]
extends to a bounded projection $P_{I}:  \mathfrak{X}_{\omega_1}(\mathbb{C}) \rightarrow \mathfrak X_{I}=  \overline{ \spa }_{\mb C}(e_{\alpha})_{\alpha\in I}$ with norm equal to 1.

Basically  $ \mathfrak{X}_{\omega_1}(\mathbb{C})$ corresponds to the complex version of the space $\mathfrak{X}_{\omega_1}$ constructed in \cite{ALT} modifying the construction in a way that its $\mb R$-linear operators have similar structural properties to the operators in the original space $\mathfrak{X}_{\omega_1}$ (i.e. the operators are strictly singular perturbation of a complex diagonal operator).

First we introduce the notation that will be used through all this paper. 

\subsection{ Basic notation}
Recall that $\omega$ and $\omega_1$ denotes the least infinite cardinal number and the least uncountable  cardinal number, respectively. Given ordinals $\gamma, \xi$  we write $\gamma+ \xi,\gamma \cdot \xi,\gamma ^{ \xi}$ for the usual arithmetic operations (see \cite{J}). For an ordinal $\gamma$ we denote  by $\Lambda(\gamma)$ the set of limit ordinals $<\gamma$.
 Denote by $c_{00} (\omega_1, \mathbb C)$ the vector space of  all functions $x: \omega_1 \rightarrow \mathbb C$ such that the set $\supp x= \{ \alpha< \omega_1 : x(\alpha)\neq 0\}$ is finite and by $(e_{\alpha})_{\alpha<\omega_1}$ its  canonical Hamel basis. For a vector $x\in c_{00} (\omega_1, \mathbb C)$   $\ran x$ will denote the minimal interval containing $\supp x$. Given two subsets $E_1$, $E_2$ of $\omega_1$ we say that $E_1<E_2$ if $\max E_1< \min E_2$. Then for $x,y\in c_{00} (\omega_1, \mathbb C)$   $x<y$   means that $\supp x < \supp y $.  For a vector $x\in c_{00} (\omega_1, \mathbb C)$  and a subset $E$ of $\omega_1$ we denote by $Ex$ (or  $P_Ex$) the restriction of $x$ on $E$  or simply the function $x\chi_E$. Finally in some cases we shall denote elements of $c_{00} (\omega_1, \mathbb C)$ as $f,g, h  \dots $ and its canonical  Hamel basis as $(e^{*}_{\alpha})_{\alpha<\omega_1}$ meaning that we refer to these elements as being functionals in the norming set. 

\subsection{ Definition of the norming set}

The space $\mf X_{\omega_1}(\mb C)$ shall be defined as the completion  of $c_{00} (\omega_1, \mathbb C)$ equipped with a norm given by a norming set $\mathcal{K}_{\omega_1}(\mathbb{C}) \subseteq c_{00} (\omega_1, \mathbb C)$. This means that the norm for every $x\in c_{00} (\omega_1, \mathbb C)$ is defined as  $\sup\{ |\phi(x)|= |\sum_{\alpha < \omega_1} \phi(\alpha) x(\alpha)|: \phi \in \mathcal{K}_{\omega_1}(\mathbb{C}) \}$. The norm of this space can also be defined inductively.  

\

We start by fixing two fast increasing sequences  $(m_j)$ and $(n_j)$  that are going to be used in the rest of this work. The sequences are defined recursively as follows:
\begin{itemize}

\item[1.] $m_{1}=2$ e $m_{j+1}=m^{4}_{j}$;
\item[2.] $n_{1}=4$ e $n_{j+1}=(4n_{j})^{s_j}$, where $s_j=\log_{2}m^{3}_{j+1}$.

\end{itemize}

\

Let  $\mathcal{K}_{\omega_1}(\mathbb{C})$  be  the minimal subset  of $c_{00}(\omega_1,\mathbb{C})$ such that 

\begin{itemize}
\item[1.] It contains every $e^{*}_{\alpha}, \,  \alpha < \omega_1$. It satisfies that  for every  $\phi \in \mathcal{K}_{\omega_1}(\mathbb{C})$ and for every complex number $\theta=\lambda+ i\mu$ with $\lambda$ and $\mu$ rationals and $|\theta|\leq 1$, $\theta \phi \in \mathcal{K}_{\omega_1}(\mathbb{C})$.  It is closed under restriction to intervals of $\omega_1$.

\item[2.] For every $\{ \phi_i,\, : \,   i=1,..., n_{2j}\}\subseteq \mathcal{K}_{\omega_1}(\mathbb{C})$ such that $ \phi_1<\cdots<    \phi_{n_{2j}}$, the combination
\begin{eqnarray*}
\phi=\displaystyle \frac{1}{m_{2j}}\sum^{n_{2j}}_{i=1}\phi_i\in \mathcal{K}_{\omega_1}(\mathbb{C}).
\end{eqnarray*}
In this case we say that $\phi$ is the result of an $(m^{-1}_{2j}, n_{2j})$-operation.

\item[3.] For every special sequence $(\phi_1, \ldots, \phi_{n_{2j+1}})$ (see the Definition \ref{tp1}), the combination
\begin{eqnarray*}
\phi=\displaystyle \frac{1}{m_{2j+1}}\sum^{n_{2j+1}}_{i=1}\phi_i\in \mathcal{K}_{\omega_1}(\mathbb{C}).
\end{eqnarray*}
In this case we say that $\phi$ is a special functional and that  $\phi$ is the result of an $(m^{-1}_{2j+1}, n_{2j+1})$-operation. 

\item[4.]  It is rationally convex.
\end{itemize}
Define a norm on $c_{00} (\omega_1, \mathbb C)$ by setting
\begin{eqnarray*}
 \|x\|  = \sup \left \{ \left | \sum_{\alpha < \omega_1} \phi(\alpha) x(\alpha)  \right | \, : \,  \phi \in \mathcal{K}_{\omega_1}(\mathbb{C})  \right\}.
\end{eqnarray*}
The space $\mathfrak{X}_{\omega_1}(\mathbb{C})$ is defined as the completion of   $(c_{00}(\omega_1,\mathbb{C}), \|.\|)$.

 \
 
This definition of the norming set $\mathcal{K}_{\omega_1}(\mathbb{C})$ is similar to the one in \cite{ALT}.  We add the property of being closed  under products with rational complex numbers of the unit ball. This, together with property 2 above, guarantees the existence of some type of sequences (like $\ell_{1}^{n}$-averages and $R.I.S$ see Appendix) in the same way they are constructed for $\mathfrak{X}_{\omega_1}$. It follows that the norm is also defined by 

\begin{eqnarray*}
 \|x\|  = \sup \left \{ \phi(x)=  \sum_{\alpha < \omega_1} \phi(\alpha) x(\alpha)  : \phi \in \mathcal{K}_{\omega_1}(\mathbb{C}),   \, \phi(x)\in \mathbb R \right\}.
\end{eqnarray*}
We also have the following  implicit formula for the norm:
$$
 \|x\| = \max  \left \{   \|x\|_{\infty},  \, \sup \sup_j \frac{1}{m_{2j}}\sum_{i=1}^{n_{2j}} \|E_ix\|,  \, E_1< E_2< \cdots <E_{n_{2j}}   \right \} \vee $$
 
\[  \sup \left \{  \frac{1}{m_{2j+1}}  \left | \sum_{i=1}^{n_{2j+1}}  \phi_i(Ex) \right | \, : \, (\phi_i)_{i=1}^{n_{2j+1}}  \,  \text{ is  $n_{2j+1}$- special,  $E$  interval} \right \}.
\]
It follows from the definition of the norming set that the canonical Hamel basis $(e_{\alpha})_{\alpha<\omega_{1}}$  is a transfinite bimonotone Schauder basis of $\mathfrak{X}_{\omega_1}(\mathbb{C})$. In fact, by Property  1 for every interval $I$ of $\omega_{1}$ the projection $P_{I}$ has norm 1:
\[ \| P_{I}x \|= \sup_{ f\in  \mathcal{K}_{\omega_1}(\mathbb{C})} | fP_{I}x|= \sup_{ f\in  \mathcal{K}_{\omega_1}(\mathbb{C})} | P_{I}fx|\leq \|x\|  
\]
 Moreover, we have that the basis $(e_{\alpha})_{\alpha< \omega_1}$ is boundedly complete and shrinking, the proof is the obvious modification to the one for $ \mathfrak{X}_{\omega_1}$ (see  \cite[Proposition 4.13] {ALT}). In consequence   $\mathfrak{X}_{\omega_1}(\mathbb{C})$ is reflexive. 

\begin{pro} \label{bal}$\overline{  \mathcal{K}_{\omega_1}(\mathbb{C}) }^{\omega^*} = \mathit B_{ \mathfrak{X}^*_{\omega_1}(\mathbb{C}) }$.
\end{pro}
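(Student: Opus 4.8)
The plan is to exploit the fact that, by construction, $\mc K_{\omega_1}(\mb C)$ is a $1$-norming subset of the dual and to combine this with the two ``geometric'' closure properties of the norming set (closure under multiplication by rational scalars of the unit disk in Property~1, and rational convexity in Property~4) so that a Hahn--Banach separation argument closes the gap between $\mc K_{\omega_1}(\mb C)$ and the whole dual ball. Write $X=\mf X_{\omega_1}(\mb C)$ and $\mc K = \mc K_{\omega_1}(\mb C)$. The inclusion $\overline{\mc K}^{\omega^*}\subseteq B_{X^*}$ is the easy half: for each fixed $\phi\in\mc K$ and every $x\in c_{00}(\omega_1,\mb C)$ one has $|\phi(x)|\leq \sup_{\psi\in\mc K}|\psi(x)| = \|x\|$, so $\phi$ extends to a functional of dual norm $\leq 1$, i.e. $\mc K\subseteq B_{X^*}$. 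Since $B_{X^*}$ is weak-$*$ compact by Banach--Alaoglu, it is weak-$*$ closed, whence $\overline{\mc K}^{\omega^*}\subseteq B_{X^*}$.

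For the reverse inclusion I would first record that $\overline{\mc K}^{\omega^*}$ is convex and balanced. Rational convexity (Property~4) together with passing to the weak-$*$ closure upgrades to genuine convexity: given $\phi_1,\phi_2\in\overline{\mc K}^{\omega^*}$ and $t\in[0,1]$, approximate $\phi_1,\phi_2$ by nets in $\mc K$ and $t$ by rationals, using that scalar multiplication and addition are separately weak-$*$ continuous. Likewise, closure of $\mc K$ under multiplication by every rational $\theta$ with $|\theta|\leq 1$ (Property~1), again combined with weak-$*$ closure, shows $\theta\cdot\overline{\mc K}^{\omega^*}\subseteq \overline{\mc K}^{\omega^*}$ for all complex $|\theta|\leq1$; that is, $\overline{\mc K}^{\omega^*}$ is balanced.

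Now suppose, for contradiction, that there is $f\in B_{X^*}\setminus\overline{\mc K}^{\omega^*}$. The set $\overline{\mc K}^{\omega^*}$ is a weak-$*$ closed, convex, balanced subset of the locally convex space $(X^*,\omega^*)$, whose topological dual is exactly $X$ (evaluations $\phi\mapsto\phi(x)$, $x\in X$). By the complex Hahn--Banach separation theorem for convex balanced sets, there exists $x\in X$ with
\[
\sup_{\phi\in\overline{\mc K}^{\omega^*}}|\phi(x)| \leq 1 < |f(x)|.
\]
Since the map $\phi\mapsto|\phi(x)|$ is weak-$*$ continuous, its supremum over $\mc K$ equals its supremum over $\overline{\mc K}^{\omega^*}$, and by the definition of the norm this common value is $\|x\|$. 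Hence $\|x\|\leq 1$, while $|f(x)|\leq\|f\|\,\|x\|\leq 1$ because $f\in B_{X^*}$, contradicting $|f(x)|>1$. Therefore $B_{X^*}\subseteq\overline{\mc K}^{\omega^*}$, and the two inclusions give the equality.

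The step I expect to be the crux is the separation in the third paragraph: it must deliver the \emph{absolute-value} (i.e. balanced) form of the inequality simultaneously on $f$ and on $\overline{\mc K}^{\omega^*}$, rather than a mere real separation $\mathrm{Re}\,f(x)>\sup\mathrm{Re}\,\phi(x)$, since only the balanced form connects back to $\|x\|=\sup_{\phi}|\phi(x)|$. This is precisely what the convex balanced structure established from Properties~1 and~4 provides, so the conceptual weight of the proof sits in verifying those closure properties survive taking the weak-$*$ closure; the separation itself is then a standard invocation of the complex Hahn--Banach theorem, and the identification $(X^*,\omega^*)^*=X$ is automatic (and in any case transparent here since $X$ is reflexive).
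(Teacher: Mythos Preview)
Your proof is correct and follows essentially the same route as the paper's: establish that the weak-$*$ closure of $\mathcal{K}_{\omega_1}(\mathbb{C})$ is convex and balanced from Properties~1 and~4, then derive a contradiction via Hahn--Banach separation against the norming identity $\|x\|=\sup_{\phi\in\mathcal{K}}|\phi(x)|$. If anything, your write-up is more careful than the paper's sketch (you use nets rather than sequences for the weak-$*$ approximations and make explicit why the balanced form of separation is needed), but the underlying argument is the same.
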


\begin{proof} Recall that the set $\mathcal{K}_{\omega_1}(\mathbb{C})$ is by definition rational convex. We notice that $\overline{  \mathcal{K}_{\omega_1}(\mathbb{C}) }^{\omega^*}$ is actually a convex set.  Indeed let $f,g \in \overline{  \mathcal{K}_{\omega_1}(\mathbb{C}) }^{\omega^*}$ and $t\in (0,1)$. Suppose  that $f_{n} \overset{\omega^*}{\rightarrow} f$,  $g_{n} \overset{\omega^*}{\rightarrow} g$ and $t_{n} \rightarrow t$, where $f_{n}, g_{n} \in \mathcal{K}_{\omega_1}(\mathbb{C})$  and $t_{n} \in \mb Q\cap (0,1)$ for every $n\in \mb N$. Then $tf + (1-t)g \in \overline{  \mathcal{K}_{\omega_1}(\mathbb{C}) }^{\omega^*} $ because
\[ t_{n}f_{n} + (1-t_{n})g_{n} \overset{\omega^*}{\rightarrow}  tf + (1-t)g.
\]
In the same manner we can prove that $ \mathfrak{X}^*_{\omega_1}(\mathbb{C})$ is balanced i.e., $\lambda  \mathfrak{X}^*_{\omega_1}(\mathbb{C}) \subseteq  \mathfrak{X}^*_{\omega_1}(\mathbb{C})$ for every $|\lambda|\leq 1$.
To prove the Proposition suppose that there exists $f\in  \mathit B_{ \mathfrak{X}^*_{\omega_1}(\mathbb{C}) } \setminus \overline{  \mathcal{K}_{\omega_1}(\mathbb{C}) }^{\omega^*}$. It follows by a standard separation argument  that there exists $x\in \mathfrak{X}_{\omega_1}(\mathbb{C})$  such that
\[ |f(x)| > \sup\{ |g(x)|  : \,  g\in  \mathcal{K}_{\omega_1}(\mathbb{C})  \}
\]
which is  absurd.
\end{proof}
	
\section{Complex structures on  $\mathfrak{X}_{\omega_1}(\mathbb{C})$}

Let $I\subseteq \omega_{1}$ be an interval of ordinals, we denote by $\mathfrak{X}_{I}(\mathbb{C})$  the closed subspace of $\mathfrak{X}_{\omega_{1}}(\mathbb{C})$  generated by $ \{ e_{\alpha }\}_{\alpha \in  I}$. For every ordinal $\gamma<\omega_1$ we write   $\mathfrak{X}_{\gamma}(\mathbb C)= \mathfrak{X}_{ [0,\gamma) }(\mathbb C)$. Notice that $\mathfrak{X}_{I}(\mathbb{C})$ is a 1-complemented subspace  of $\mathfrak{X}_{\omega_{1}}(\mathbb{C})$: the restriction to coordinates in $I$ is a projection of norm 1 onto $\mathfrak{X}_{I}(\mathbb{C})$. We denote this projection by $P_I$ and by $P^{I}=(Id-P_{I})$ the corresponding  projection onto the complement  space  $(Id- P_I)\mathfrak{X}_{\omega_{1}}(\mathbb{C})$, which we denote  by $\mathfrak{X}^{I}(\mathbb{C})$.  

A  transfinite sequence $(y_{\alpha})_{\alpha<\gamma}$ is called a block  sequence when  $y_{\alpha}<y_{\beta}$ for all  $\alpha< \beta <\gamma$.  Given a block sequence $(y_{\alpha})_{\alpha<\gamma}$ a  \emph{block subsequence} of $(y_{\alpha})_{\alpha<\gamma}$ is a block sequence $(x_{\beta})_{\beta<\xi}$ in the span of $(y_{\alpha})_{\alpha<\gamma}$. A \emph{real block subsequence} of $(y_{\alpha})_{\alpha<\gamma}$ is a block subsequence in the \emph{real} span of $(y_{\alpha})_{\alpha<\gamma}$.  A sequence $(x_n)_{n\in \mb N}$ is a block sequence of $\mathfrak{X}_{\omega_{1}}(\mathbb{C})$ when it is a block subsequence of $(e_{\alpha})_{\alpha<\omega_1}$.

\begin{teo}\label{bt} Let $T:\mathfrak{X}_{\omega_1}(\mathbb{C})\to\mathfrak{X}_{\omega_1}(\mathbb{C})$ be a complex structure on $\mathfrak{X}_{\omega_1}(\mathbb{C})$, that is, $T$ is a bounded $\mathbb{R}$-linear operator such that $T^2=-Id$. Then there exists a bounded diagonal operator $D_T :\mathfrak{X}_{\omega_1}(\mathbb{C})\to\mathfrak{X}_{\omega_1}(\mathbb{C})$, which is another complex structure, such that $T-D_T$ is strictly singular. Moreover $D_{T}= \sum_{j=1}^{k}\epsilon_{j}iP_{I_{j}}$ for some signs $(\epsilon_{j})_{j=1}^{k}$  and  ordinal intervals $I_{1}< I_{2}< \ldots < I_{k}$ whose extremes are limit ordinals  and such that  $\omega_{1}=\cup_{j=1}^{k} I_{j}$.
\end{teo}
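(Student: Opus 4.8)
The plan is to reduce the equation $T^2=-Id$ to a statement about a single diagonal operator and then read off its discrete form from the geometry of the basis. The decisive first step is to invoke the operator-structure analysis for $\mathfrak{X}_{\omega_1}(\mathbb{C})$ (the adaptation to this space of the R.I.S.\ machinery of \cite{ALT}): every bounded $\mathbb{R}$-linear operator is a strictly singular perturbation of a bounded \emph{$\mathbb{C}$-linear} diagonal operator. Applying this to $T$ I would write $T=D+S$ with $S$ strictly singular and $De_\alpha=\lambda_\alpha e_\alpha$, $\lambda_\alpha\in\mathbb{C}$. I record the one extra property of the construction that I shall use: a bounded diagonal operator has its diagonal $(\lambda_\alpha)_{\alpha<\omega_1}$ of bounded variation along $\omega_1$, so it has only countably many jumps and a well-defined constant value on an uncountable final interval.

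Next I would exploit $T^2=-Id$ directly. Expanding, $-Id=T^2=D^2+(DS+SD+S^2)$, so the diagonal operator $E:=D^2+Id$, whose entries are $\lambda_\alpha^2+1$, equals $-(DS+SD+S^2)$ and is therefore strictly singular. Here I use the key geometric fact that the restriction of a strictly singular operator to any $\mathfrak{X}_J(\mathbb{C})$ with $J$ an infinite interval is again strictly singular, whereas $c\,Id_{\mathfrak{X}_J(\mathbb{C})}$ is an isomorphism for $c\neq0$. Since $E$ is also of bounded variation, it is constant on an uncountable final interval, and strict singularity forces that constant to be $0$; thus $\lambda_\alpha^2=-1$, i.e.\ $\lambda_\alpha\in\{i,-i\}$, for every $\alpha$ past some $\alpha_0<\omega_1$. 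On the countable initial segment $[0,\alpha_0)$ the entries of $E$ then tend to $0$, and the corresponding diagonal operator is compact by the diagonal norm estimates of the construction (its truncations to initial intervals converge to it in operator norm); replacing the values of $D$ on $[0,\alpha_0)$ by appropriate square roots of $-1$ and absorbing the strictly singular difference into $S$, I may assume $\lambda_\alpha\in\{i,-i\}$ for \emph{all} $\alpha<\omega_1$.

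With $\lambda_\alpha\in\{i,-i\}$ everywhere, finiteness is immediate: the two admissible values are at distance $2$, so bounded variation of $(\lambda_\alpha)$ allows only finitely many jumps. Hence there are finitely many intervals $I_1<\cdots<I_k$ partitioning $\omega_1$ on each of which $\lambda_\alpha$ is the constant $\epsilon_j i$ with $\epsilon_j\in\{\pm1\}$, that is $D=\sum_{j=1}^{k}\epsilon_j i\,P_{I_j}$. Because the $I_j$ are disjoint with $\bigcup_j I_j=\omega_1$, one checks $D^2=\sum_j(\epsilon_j i)^2P_{I_j}=-\sum_j P_{I_j}=-Id$, so $D$ is $\mathbb{C}$-linear and is genuinely a complex structure, while $T-D=S$ is strictly singular. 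Finally, to make each endpoint of the $I_j$ a limit ordinal I would push every successor-type boundary down to its limit part; this alters $\lambda_\alpha$ on only finitely many coordinates, so it changes $D$ by a finite-rank, hence strictly singular, operator that is absorbed into $S$, and it leaves $D^2=-Id$ intact.

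The hard part will be the input cited in the first step, namely that the $\mathbb{C}$-linear diagonal is the only non-strictly-singular part of an $\mathbb{R}$-linear operator. Writing $T=T_1+T_2$ as its $\mathbb{C}$-linear and $\mathbb{C}$-antilinear components, two facts must be extracted, and they are exactly what the norming set was tailored to provide: that the antilinear part $T_2$ is strictly singular (this is what forbids $D$ from acting as an arbitrary trace-zero, determinant-one real $2\times2$ block on each $\mathbb{C}e_\alpha$ and pins $\lambda_\alpha$ to a true complex scalar, and it is the precise sense in which $\mathfrak{X}_{\omega_1}(\mathbb{C})$ is far from its conjugate), and that bounded diagonal operators are of bounded variation. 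Both are established, as in \cite{ALT}, by testing $T$ against R.I.S.\ sequences and exact pairs and estimating $\|Tx-\lambda x\|$ on such vectors; this is where the combinatorics of the $(m_{2j}^{-1},n_{2j})$- and special operations, together with the closure of $\mathcal{K}_{\omega_1}(\mathbb{C})$ under rational complex scalars of the unit ball, do the real work.
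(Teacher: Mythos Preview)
Your approach is correct, but it takes a heavier route than the paper's and carries an unnecessary step.

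You invoke as a black box the full structure theorem ``every bounded $\mathbb{R}$-linear operator on $\mathfrak{X}_{\omega_1}(\mathbb{C})$ is a bounded $\mathbb{C}$-diagonal plus strictly singular,'' and then square. The paper deliberately avoids this. It only defines the diagonal values $\xi_T(\alpha)$ (one value per $\omega$-block $I_\alpha=[\alpha,\alpha+\omega)$, not per coordinate) via R.I.S.\ limits, and then for the specific case $T^2=-Id$ carries out a local computation (Proposition~\ref{dcs}): writing $P_{I_\alpha}T\iota_{\mathfrak{X}_{I_\alpha}}=\xi_T(\alpha)Id+S_2$ and squaring \emph{inside $\mathfrak{X}_{I_\alpha}$}, one gets $(\xi_T(\alpha)^2+1)Id_{\mathfrak{X}_{I_\alpha}}$ strictly singular, hence $\xi_T(\alpha)\in\{i,-i\}$ for \emph{every} $\alpha$. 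Combined with the transfinite convergence of $(\xi_T(\alpha))_\alpha$ this forces finitely many sign changes, so $D_T=\sum_j\epsilon_j iP_{I_j}$ is manifestly bounded; only then is $T-D_T$ shown to be strictly singular (Proposition~\ref{opd}). The gain is that the paper never needs the general boundedness of $D_T$, whose proof in \cite{ALT} goes through the finite interval representability of a James-like space; the paper explicitly flags this (``For the case of complex structures we have a simpler proof'').

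Two smaller points. First, because the paper's diagonal is constant on each infinite-dimensional block $\mathfrak{X}_{I_\alpha}$, the implication ``$E$ strictly singular $\Rightarrow$ diagonal entry $=0$'' is immediate block by block; your detour through an initial segment $[0,\alpha_0)$, compactness, and replacing $\lambda_\alpha$ by nearby roots of $-1$ is correct but entirely avoidable. Second, ``bounded variation'' is not quite the property proved; what one has is that every strictly increasing $\omega$-sequence of indices gives a convergent subsequence, which is what actually yields finitely many jumps once the values are in $\{i,-i\}$. Your identification of the genuinely hard inputs (Step~III: $d(Tx_n,\mathbb{C}x_n)\to0$, which is what kills the $\mathbb{C}$-antilinear part; and Step~IV plus mixing: convergence of the diagonal) is exactly right.
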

The strategy for the proof  of Theorem \ref{bt} is the same than the one in  \cite[Theorem 5.32]{ALT} for the real case. However here we want to understand bounded $\mb R$- linear operators in a complex space. This forces us to justify that the ideas from \cite{ALT} still work in our context. 
 The result is obtained using the  following theorems that we explain with more details in the Appendix.\\ \\
{\bf Step I.} There exists  a family  $\mathfrak F$ of semi normalized block subsequences of $(e_{\alpha})_{\alpha< \omega_{1}}$, called $R.I.S$ (\emph{Rapidly Increasing Sequences}), such that every normalized block sequence  $(x_n)_{n\in \mathbb N}$ of  $\mathfrak{X}_{\omega_1}(\mb C)$ has a real block subsequence in $\mathfrak F$.\\ \\
Recall that a Banach space $X$ is hereditarily indecomposable (or H.I) if no (closed) subspace of $X$ can be written as the direct sum of  infinite-dimensional subspaces. Equivalently, for any two subspaces $Y$, $Z$ of $X$ and $\epsilon>0$, there exist $y\in Y$, $z\in Z$ such that $\| y\|= \|z\|=1$ and $\|y-z\|<\epsilon$.\\ \\
{\bf Step II.} For every normalized block sequence $(x_n)_{n\in \mb N}$ of $\mf X_{\omega_{1}}(\mb C)$, the subspace $\overline{\spa}_{\mb R} (x_{n})_{n\in \mb N} $ of  $\mathfrak{X}_{\omega_1}(\mb C)$ is a real H.I space.\\ \\
{\bf Step III.} Let   $(x_{n})_{n\in \mathbb N}$ be a $R.I.S$ and $T:\overline {\spa}_{\mb C}(x_{n})_{n\in \mathbb N} \to \mathfrak{X}_{\omega_1}(\mathbb{C})$ be a bounded $\mathbb R$-linear operator. Then $\displaystyle\lim_{n \rightarrow \infty}d(Tx_n, \mathbb{C}x_n)=0$.  \\   \\ 
The proof of Step I, II and III are given in the Appendix.\\    \\
{\bf Step IV.} Let $(x_n)_{n\in \mathbb N}$ be a  $R.I.S$  and $T: \overline {\spa}_{\mb C}(x_{n})_{n\in \mathbb N} \to \mathfrak{X}_{\omega_1}(\mathbb{C})$ be  a bounded  $\mathbb R$-linear operator.  Then the sequence  $\lambda_{T}:\mathbb{N}\longrightarrow \mathbb{C}$ defined by $d(Tx_n, \mathbb{C}x_n)=\|Tx_n-\lambda_{T}(n)x_n\|$ is convergent.  

\begin{proof} [Proof of Step IV]
First we note that the sequence  $(\lambda_T(n))_n$ is  bounded. Then consider $(\alpha_n)_n$  and $(\beta_n)_n$  two strictly increasing sequences of positive integers  and suppose that $\lambda_{T}(\alpha_n)\longrightarrow \lambda_1$ and $\lambda_{T}(\beta_n)\longrightarrow \lambda_2$, when $n\longrightarrow \infty$. Going to a subsequence we can assume that  $x_{\alpha_n}<x_{\beta_n} < x_{\alpha _{n+1}}$ for every $n \in \mathbb{N}$.

\
Fix $\epsilon>0$. Using the result of the Step III,  we have that $\displaystyle\lim_{n \rightarrow \infty}\|Tx_{\alpha_n}-\lambda_1x_{\alpha_n}\|=0$.  By passing to a subsequence if necessary, assume 
$$\|Tx_{\alpha_{n}}-\lambda_{1}x_{\alpha_{n}}\|\leq \frac{\epsilon}{2^{n}6}.$$
for every $n\in \mathbb N$.  Hence, for every  $w= \sum_{n} a_nx_{\alpha_{n}}\in \spa_{\mathbb{R}}(x_{\alpha_{n}})_n$ with $\|w\|\leq 1$ we have

\begin{eqnarray*}
\|Tw-\lambda_1w\|&\leq& \displaystyle \sum_{n} |a_n|\|Tx_{\alpha_n}-\lambda_1x_{\alpha_n}\|\\
&\leq&\epsilon/3.
\end{eqnarray*}
because $(e_{\alpha})_{\alpha< \omega_1}$ is a bimonotone transfinite basis.
In the same way, we  can assume that   for every  $w \in \spa_{ \mathbb{R} }(x_{\beta_{m}})_m$ with $\|w\|\leq 1$, $\|Tw-\lambda_2w\|\leq\epsilon/3$.
By  Step II we have that  $\overline{ \spa}_{\mathbb R }(x_{\alpha_{n}})_n\cup (x_{\beta_{n}})_n$ is  real-H.I. Then there exist unit vectors $w_1\in \spa_{ \mathbb R }(x_{\alpha_{n}})_n$  and  $w_2 \in \spa_{\mathbb R}(x_{\beta_{m}})_m$, such that $\|w_1-w_2\|\leq \frac{\epsilon}{3}\|T\|$. Therefore,
\begin{eqnarray*}
\|\lambda_1w_1-\lambda_2w_2\| &\leq &\|Tw_1-\lambda_1w_1\|+\|Tw_1-Tw_2\|+\|Tw_2-\lambda_2w_2\| \leq \epsilon.
\end{eqnarray*}
By other side
\begin{eqnarray*}
\|\lambda_1w_1-\lambda_2w_2\|\geq \|(\lambda_1-\lambda_2)w_1\|-\|\lambda_2(w_1-w_2)\|= |\lambda_1-\lambda_2|-|\lambda_2|\epsilon.
\end{eqnarray*}
In consequence, $ |\lambda_1-\lambda_2|\leq (1+|\lambda_2|)\epsilon$. Since $\epsilon$ was arbitrary, it follows that $\lambda_1=\lambda_2$.
\end{proof}

\

Let  $T: \mathfrak{X}_{\omega_1}(\mathbb {C} ) \to \mathfrak{X}_{\omega_1}(\mathbb{C}) $ be  a bounded  $\mathbb R$-linear operator. There is a canonical way to associate a bounded diagonal operator $D_T$ (with respect to the basis $(e_{\gamma})_{\gamma<\omega_1}$) such that $T-D_T$ is strictly singular: Fix  $\alpha\in \Lambda(\omega_{1})$ a limit ordinal, and $(x_{n})_{n\in \mathbb N}$, $(y_{n})_{n\in \mathbb N}$  two $R.I.S$  such that $\sup_{n} \max \supp x_{n} = \sup_{n} \max \supp$ $y_{n}=\alpha+\omega$.  By a property of $\mf F$ we can mix the sequences $(x_{n})_{n}$ , $(y_{n})_{n}$  in order to form a new $R.I.S$ $(z_{n})_{n\in \mathbb N}$ such that $z_{2k} \in \{ x_{n}\}_{n\in \mathbb N}$  and $z_{2k-1} \in \{ y_{n}\}_{n\in \mathbb N}$ for all $k\in \mathbb N$ (See  Remark \ref{mr}).  Then it follows from  Step IV  that the sequences defined by the formulas  $d(Tx_n, \mathbb{C}x_n)=\|Tx_n-\lambda_{T}(n)x_n\|$ and $d(Ty_n, \mathbb{C}y_n)=\|Ty_n-\mu(n)y_n\|$ are convergent, and  by the mixing argument,  they must have the same limit. Hence for  each  $\alpha\in \Lambda(\omega_{1})$  there exists a unique complex number $\xi_{T}(\alpha)$ such that 
\[  \lim_{n\to \infty} \|Tw_{n} -\xi_{T}(\alpha)w_{n}\|=0
\]
for every $(w_{n})_{n\in \mb N}$  $R.I.S$ in $\mf X_{I_{\alpha}}$,  where  we write $I_{\alpha}$ to denote the ordinal interval $[\alpha, \alpha+\omega)$. We proceed to define a diagonal linear operator $D_{T} $  on the (linear) decomposition of $\spa (e_{\alpha})_{\alpha<\omega_{1}}$ 
\[ \spa (e_{\alpha})_{\alpha<\omega_{1}}= \bigoplus_{\alpha\in \Lambda(\omega_{1})} \spa (x_{\beta})_{\beta \in I_{\alpha}}
\]
by setting $D_{T}(e_{\beta})=\xi_{T}(\alpha)e_{\beta}$ when $\beta \in  I_{\alpha}$.
\

Observe in addition that this sequence $(\xi_{T}(\alpha))_{\alpha\in \Lambda(\omega_{1})}$ is convergent. That is, for every strictly increasing sequence $(\alpha_n)_{n\in \mathbb N}$ in $\Lambda(\omega_{1})$, the corresponding subsequence $(\xi_{T}(\alpha_n))_{n\in \mathbb N}$ is convergent. 
In fact,  for every $n\in \mathbb N$,  fix $(y_{n}^{k})_{k\in \mathbb N}$ a $R.I.S$ in $ \mathfrak{X}_{I_{\alpha_{n}}}$.  Then we can take $(y_{n}^{k_{n}})_{n\in \mb N}$  a $R.I.S$ such that $\| Ty_{n}^{k_{n}} - \xi_{T} (\alpha_{n}+ \omega)y_{n}^{k_{n}}\|< 1/n$. It follows  by  Step IV there exists $\lambda \in \mb C$ such that $\lim_{n}\| Ty_{n}^{k_{n}} - \lambda y_{n}^{k_{n}}\|=0$. This implies that $\lim_{n} \xi_{T}(\alpha_{n}+\omega)=\lambda$.

\

In general this operator $D_{T}$ defines a bounded operator on $\mf X_{\omega_{1}}(\mb C)$. The proof  is the same that in \cite[Proposition 5.31]{ALT} and uses that certain James like space of a mixed Tsirelson space is finitely interval representable in every normalized transfinite block sequence of $\mathfrak{X}_{\omega_1}(\mathbb{C})$. 
For  the case of complex structures we have a simpler proof (see Proposition \ref{dcs}).

\begin{pro}\label{opd} Let   $A$ be a subset of ordinals contained in $\omega_1$  and $X= \overline{\spa}_{\mb C} (e_{\alpha})_{\alpha\in A}$. Let $T:X \to \mf X_{\omega_{1}}(\mb C)$ be a bounded $\mb R$-linear operator. Then $T$  is strictly singular if and only if for every  $(y_{n})_{n\in \mb N}$  $R.I.S$ on $ X $, $\lim_{n} Ty_{n}=0$. 
\end{pro}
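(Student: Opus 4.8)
The plan is to prove both directions of the equivalence in Proposition~\ref{opd}. The forward implication is the routine one: if $T$ is strictly singular, then since every R.I.S.\ is in particular a normalized (up to scaling) block sequence, the restriction of $T$ to its closed span cannot be an isomorphism on any infinite-dimensional subspace, so in particular $Ty_n$ must tend to $0$. More carefully, I would argue by contradiction: if $\|Ty_n\|\not\to 0$, pass to a subsequence along which $\|Ty_n\|\geq\delta>0$, and use Step~III together with Step~IV to extract the limiting scalar; if that scalar is nonzero one readily produces an infinite-dimensional subspace on which $T$ is bounded below, contradicting strict singularity, while if it is zero one must still reconcile $d(Ty_n,\mb C y_n)\to 0$ with $\|Ty_n\|\geq\delta$, which forces the $\mb C y_n$-component to survive and again yields a subspace on which $T$ is an isomorphism.

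For the converse, which is the substantive direction, I would assume $Ty_n\to 0$ for every R.I.S.\ $(y_n)_n$ in $X$ and show $T$ is strictly singular. Suppose not: then there is a normalized block sequence $(x_n)_{n\in\mb N}$ in $X$ on which $T$ is bounded below, say $\|Tx_n\|\geq\delta>0$ after passing to a subsequence. The key step is to invoke Step~I: every normalized block sequence of $\mf X_{\omega_1}(\mb C)$ has a \emph{real} block subsequence lying in $\mathfrak F$, i.e.\ an R.I.S. The subtlety is that Step~I produces an R.I.S.\ that is a block subsequence, i.e.\ consists of \emph{blocks} of the original $x_n$, so I must check that $T$ remains bounded below on this R.I.S. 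This is not automatic, since passing to blocks (convex-type combinations of the $x_n$) could in principle shrink $\|T(\cdot)\|$.

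The hard part will be precisely this reconciliation: extracting from the hypothesis "$T$ bounded below on $(x_n)$" an R.I.S.\ on which $T$ does not tend to $0$, thereby contradicting the assumption. I expect the cleanest route is to combine Step~III and Step~IV applied to the R.I.S.\ $(y_n)_n$ obtained from Step~I: these give a scalar $\lambda\in\mb C$ with $\|Ty_n-\lambda y_n\|\to 0$. Under the standing hypothesis $Ty_n\to 0$, this forces $\lambda y_n\to 0$, hence $\lambda=0$ since the $y_n$ are seminormalized. The work is then to show that $T$ being bounded below on \emph{some} block sequence of $X$ propagates to its being bounded below (in norm, not merely modulo $\mb C y_n$) on a derived R.I.S., so that $\lambda=0$ and $Ty_n\to 0$ become contradictory. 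One way to guarantee this is to choose the R.I.S.\ inside the span of a subsequence of $(x_n)$ itself and exploit bimonotonicity of the basis so that norms of the blocking combinations are controlled below by a positive multiple of $\delta$; here Step~II (real-H.I.\ property) is the tool that lets me compare the R.I.S.\ directions with the original $x_n$ and rule out cancellation. Once $T$ is seen to be bounded below along an R.I.S., the contradiction with $Ty_n\to 0$ is immediate, and strict singularity of $T$ follows.
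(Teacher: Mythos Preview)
Your forward direction is essentially the paper's argument: from $\lim_n Ty_n\neq 0$ one combines Step~III and Step~IV to obtain $\lambda\in\mb C$ with $\|Ty_n-\lambda y_n\|\to 0$, observes $\lambda\neq 0$ (else $Ty_n\to 0$), and then a small-perturbation argument on a tail (using bimonotonicity to control coefficients) shows $T$ is an isomorphism on $\overline{\spa}(y_n)_n$, contradicting strict singularity. Your discussion of the ``$\lambda=0$'' case is slightly roundabout but correct.

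The converse, however, is where you create an artificial difficulty and do not resolve it. You write that ``$T$ not strictly singular'' yields a normalized block sequence $(x_n)$ on which $T$ is bounded below, and then worry that passing to real block combinations (to produce an R.I.S.) might destroy the lower bound. This concern stems from under-using the hypothesis. Failure of strict singularity gives an infinite-dimensional closed subspace $Y\subseteq X$ on which $T|_Y$ is an \emph{isomorphism}; after a standard perturbation one may take $Y=\overline{\spa}(x_n)_{n\in\mb N}$ for a block sequence $(x_n)$. Now Step~I produces a normalized R.I.S.\ $(y_n)$ in $\spa_{\mb R}(x_n)\subseteq Y$, and since $T|_Y$ is bounded below on \emph{all} of $Y$ (not merely on the individual $x_n$), one immediately gets $\inf_n\|Ty_n\|>0$, contradicting the hypothesis $Ty_n\to 0$. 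No appeal to Step~II, Step~III, Step~IV, or the $\lambda$-argument is needed for this direction; the paper's proof of the converse uses only Step~I.

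In short: the gap is that you interpret the consequence of ``not strictly singular'' as a pointwise lower bound $\|Tx_n\|\geq\delta$ rather than a uniform lower bound on the whole span, and then look for elaborate machinery (bimonotonicity, the real-H.I.\ property) to recover what you already had. Once you keep the isomorphism on the span, the converse is a two-line argument.
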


\begin{proof} The proposition is trivial when the set $A$ is finite, then we assume that $A$ is infinite. Suppose that $T$ is strictly singular. Let $(y_{n})_{n\in \mb N}$ be a   $R.I.S$ on $X$ such that $ \lim_{n} Ty_{n}\neq 0$,  then by Step IV  there is  $\lambda\neq 0$ with $ \lim_{n}\|Ty_{n} -\lambda y_{n}\|=0$. Take $0<\epsilon<|\lambda|$. By passing to a subsequence if necessary, we assume that $\|( T- \lambda Id)| _{\overline{\spa} (y_{n})_{n} } \|<\epsilon$. This implies that $T|_{\overline{\spa} (y_{n})_{n} }$ is an isomorphism which is a contradiction. 

Conversely, suppose that for every $(y_{n})_{n}$ $R.I.S$ on $X$, $\lim_{n} Ty_{n}=0$. Assume  that $T$ is not strictly singular. Then there is a block sequence subspace $Y= \overline{\spa} (y_{n})_{n\in \mathbb N}$  of $ X$ such that $T$ restricted to $Y$ is an isomorphism.  By  Step I we can  assume that the sequence $(y_{n})_{n}$ is already a $R.I.S$ on $X$.  Then $\inf_{n} \|Ty_{n}\|>0$. And we obtain a contradiction. 
\end{proof}

Given $Y \subseteq  \mf X_{\omega_{1}}(\mb C)$ we denote by $\iota_Y$ the canonical inclusion of $Y$ into $ \mf X_{\omega_{1}}(\mb C)$.
\begin{cor}\label{dia}Let  $\alpha \in \Lambda(\omega_{1})$ and $T: \mf X_{I_{\alpha}}(\mb C) \to \mf X_{\omega_{1}}(\mb C)$ be a bounded $\mb R$-linear operator. Then there exists (unique) $\xi_{T}(\alpha) \in \mb C$ such that $T - \xi_{T}(\alpha)\iota_{\mf X_{I_{\alpha}} (\mb C)}$ is strictly singular. 
\end{cor}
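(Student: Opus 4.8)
The plan is to combine the scalar $\xi_T(\alpha)$ produced by the mixing argument recorded just before Proposition \ref{opd} with the characterization of strict singularity supplied by Proposition \ref{opd} itself. The point to notice first is that the domain $\mathfrak{X}_{I_\alpha}(\mathbb{C})$ is precisely the closed $\mathbb C$-span of the countable, infinite block sequence $(e_\beta)_{\beta\in I_\alpha}$, so it matches the hypotheses of Proposition \ref{opd} with $A=I_\alpha$, and every R.I.S. in $\mathfrak{X}_{I_\alpha}(\mathbb{C})$ lies inside the domain of $T$.

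First I would establish the existence of $\xi_T(\alpha)$. Since every R.I.S. $(w_n)$ in $\mathfrak{X}_{I_\alpha}(\mathbb{C})$ lies in the domain of $T$, Step IV applies and the sequence given by $d(Tw_n,\mathbb C w_n)=\|Tw_n-\lambda_T(n)w_n\|$ converges. Exactly as in the discussion preceding Proposition \ref{opd}, interleaving any two such R.I.S. into a single R.I.S. and invoking Step IV once more forces the two limits to coincide; this common value is $\xi_T(\alpha)$, and thus $\lim_n\|Tw_n-\xi_T(\alpha)w_n\|=0$ for every R.I.S. $(w_n)$ in $\mathfrak{X}_{I_\alpha}(\mathbb{C})$.

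Next I would set $S=T-\xi_T(\alpha)\iota_{\mathfrak{X}_{I_\alpha}(\mathbb{C})}$, which is again a bounded $\mathbb R$-linear operator (the map $x\mapsto\xi_T(\alpha)x$ is $\mathbb C$-linear, hence $\mathbb R$-linear). For any R.I.S. $(w_n)$ in $\mathfrak{X}_{I_\alpha}(\mathbb{C})$ the previous paragraph gives $Sw_n=Tw_n-\xi_T(\alpha)w_n\to 0$, so Proposition \ref{opd}, applied with $A=I_\alpha$ and $X=\mathfrak{X}_{I_\alpha}(\mathbb{C})$, yields that $S$ is strictly singular. This is the desired conclusion.

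Finally, for uniqueness I would argue that if $\xi_1,\xi_2\in\mathbb C$ both make $T-\xi_j\iota$ strictly singular, then their difference $(\xi_1-\xi_2)\iota=(T-\xi_2\iota)-(T-\xi_1\iota)$ is strictly singular as well. But $\iota$ is an isometric (norm-one) embedding of the infinite-dimensional space $\mathfrak{X}_{I_\alpha}(\mathbb{C})$, so $(\xi_1-\xi_2)\iota$ is bounded below by $|\xi_1-\xi_2|$ and hence an isomorphism onto its range unless $\xi_1=\xi_2$; since a strictly singular operator cannot be an isomorphism on an infinite-dimensional subspace, we conclude $\xi_1=\xi_2$. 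The only substantive input is Step IV together with the mixing property of $\mathfrak F$; everything else is bookkeeping, and the sole point requiring care is to confirm that all the R.I.S. invoked live inside the countable interval $I_\alpha$, and therefore inside the domain of $T$, so that I expect no genuine obstacle here.
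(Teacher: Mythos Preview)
Your proof is correct and follows exactly the same route as the paper: you take the scalar $\xi_T(\alpha)$ whose existence and uniqueness were established in the discussion preceding Proposition~\ref{opd} (via Step~IV and the mixing property of R.I.S.), and then apply Proposition~\ref{opd} to conclude that $T-\xi_T(\alpha)\iota$ is strictly singular. Your write-up simply spells out in more detail what the paper compresses into two lines, and adds an explicit uniqueness argument that the paper leaves implicit.
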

\begin{proof} Let $\xi_{T}(\alpha)$ be the (unique) complex number such that $\lim \| Ty_{n}- \xi_{T}(\alpha)y_{n}\|=0$ for every $(y_{n})_{n}$ $R.I.S$ on $\mf X_{I_{\alpha}}(\mb C)$. Then by the previous Proposition $T-\xi_{T}(\alpha)\iota_{\mf X_{I_{\alpha}} (\mb C)}$ is strictly singular.
\end{proof}

\begin{cor} Let $\alpha\in \Lambda(\omega_{1})$  and $R: \mf{X}_{I_{\alpha}}(\mb C) \to \mf{X}^{I_{\alpha}}(\mb C)$ be a bounded $\mb R$-linear operator. Then $R$ is strictly singular. 
\end{cor}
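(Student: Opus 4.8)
The plan is to reduce the statement to Proposition~\ref{opd} and then exploit the fact that the range of $R$ lives in a subspace whose support is disjoint from that of the domain. First I would observe that strict singularity of $R$ is unaffected by viewing its range inside the larger space $\mf X_{\omega_1}(\mb C)$: composing $R$ with the isometric inclusion $\mf X^{I_{\alpha}}(\mb C)\hookrightarrow \mf X_{\omega_1}(\mb C)$ yields a bounded $\mb R$-linear operator $\tilde R:\mf X_{I_{\alpha}}(\mb C)\to\mf X_{\omega_1}(\mb C)$ which is strictly singular if and only if $R$ is. Thus it suffices, by Proposition~\ref{opd} applied with $A=I_{\alpha}$, to prove that $\lim_n \tilde R y_n=0$ for every $R.I.S$ $(y_n)_{n\in\mb N}$ in $\mf X_{I_{\alpha}}(\mb C)$.

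Next I would invoke Corollary~\ref{dia}: there is a unique scalar $\xi=\xi_{\tilde R}(\alpha)\in\mb C$ with $\tilde R-\xi\,\iota_{\mf X_{I_{\alpha}}(\mb C)}$ strictly singular, so by the same Proposition $\lim_n\|\tilde R y_n-\xi y_n\|=0$ for every $R.I.S$ $(y_n)$ in $\mf X_{I_{\alpha}}(\mb C)$. The crux of the argument is to show $\xi=0$, and for this I would apply the norm-one projection $P_{I_{\alpha}}$. Since $y_n\in\mf X_{I_{\alpha}}(\mb C)$ we have $P_{I_{\alpha}}y_n=y_n$, whereas $\tilde R y_n\in\mf X^{I_{\alpha}}(\mb C)$ gives $P_{I_{\alpha}}\tilde R y_n=0$. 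Hence
\[ |\xi|\,\|y_n\| = \|P_{I_{\alpha}}(\tilde R y_n-\xi y_n)\| \le \|\tilde R y_n-\xi y_n\| \longrightarrow 0. \]
Because every $R.I.S$ is semi-normalized, i.e. $\inf_n\|y_n\|>0$, this forces $\xi=0$.

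With $\xi=0$ in hand, the displayed limit becomes $\lim_n\|\tilde R y_n\|=0$, valid for every $R.I.S$ in $\mf X_{I_{\alpha}}(\mb C)$, and Proposition~\ref{opd} immediately yields that $\tilde R$, and therefore $R$, is strictly singular. I expect essentially no technical obstacle here: the entire content is the clean observation that the support-disjointness of domain and range, combined with the bimonotonicity of the basis (which makes $P_{I_{\alpha}}$ a norm-one projection), forces the canonical diagonal coefficient $\xi$ to vanish. The only point requiring a little care is the reduction in the first paragraph, namely that passing from $R$ to $\tilde R$ preserves strict singularity; this holds because the inclusion of $\mf X^{I_{\alpha}}(\mb C)$ is an isometry and strict singularity is a property of the behaviour of the operator on the infinite-dimensional subspaces of its domain.
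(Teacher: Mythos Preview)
Your proof is correct and uses essentially the same ingredients as the paper: compose with the inclusion, invoke Corollary~\ref{dia} to write $\tilde R=\xi\,\iota_{\mf X_{I_\alpha}(\mb C)}+S$, and exploit the disjointness of supports via a projection. The only difference is which projection you apply: you use $P_{I_\alpha}$ to force $\xi=0$ and then go back through Proposition~\ref{opd}, whereas the paper applies the complementary projection $P^{I_\alpha}$ directly to the equation $\iota_{\mf X^{I_\alpha}(\mb C)}R=\xi\,\iota_{\mf X_{I_\alpha}(\mb C)}+S$, which kills the $\xi\,\iota_{\mf X_{I_\alpha}(\mb C)}$ term outright (since $P^{I_\alpha}$ vanishes on $\mf X_{I_\alpha}(\mb C)$) and recovers $R=P^{I_\alpha}S$ in one step. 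Your route is slightly longer but perfectly valid; note also that once you have $\xi=0$, Corollary~\ref{dia} already gives $\tilde R$ strictly singular without needing to re-invoke Proposition~\ref{opd}.
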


\begin{proof} By the previous result, $\iota_{\mf X^{I_{\alpha}}(\mb C)}R= \lambda_{\alpha}\iota_{\mf X_{I_{\alpha}} (\mb C)}+ S$ with $S$  strictly singular. Then projecting by $P^{I_{\alpha}}$ we obtain $R= P^{I_{\alpha}} \circ \iota_{\mf X^{I_{\alpha}}(\mb C)}R = P^{I_{\alpha}}S $ which is strictly singular.
\end{proof}

\begin{pro}\label{dcs} Let $T$ be a complex structure on $\mathfrak{X}_{\omega_1}(\mathbb{C})$. Then  the linear  operator $D_{T}$ is a bounded complex structure.
\end{pro}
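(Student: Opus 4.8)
The plan is to reduce the statement to the single numerical identity $\xi_T(\alpha)^2=-1$ for every $\alpha\in\Lambda(\omega_1)$, and then to read off both $D_T^2=-Id$ and the boundedness of $D_T$ from it. First I would note that $D_T$ is $\mathbb C$-linear: a diagonal operator with respect to the basis $(e_\beta)_{\beta<\omega_1}$ satisfies $D_T(\lambda x)=\lambda D_T(x)$ for every $\lambda\in\mathbb C$ directly from its definition. Consequently $D_T^2(e_\beta)=\xi_T(\alpha)^2e_\beta$ whenever $\beta\in I_\alpha$, so that the complex-structure equation $D_T^2=-Id$ is equivalent to $\xi_T(\alpha)^2=-1$ for all $\alpha\in\Lambda(\omega_1)$.

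The heart of the argument is to prove this identity from $T^2=-Id$, using that $\xi_T(\alpha)$ governs $T$ on \emph{every} R.I.S. of $\mathfrak X_{I_\alpha}(\mathbb C)$. Fix a limit ordinal $\alpha$, write $\xi=\xi_T(\alpha)=a+bi$ with $a,b\in\mathbb R$, and take an R.I.S. $(w_n)$ in $\mathfrak X_{I_\alpha}(\mathbb C)$. The key observation is that $(iw_n)$ is again an R.I.S. in $\mathfrak X_{I_\alpha}(\mathbb C)$: since $\mathcal K_{\omega_1}(\mathbb C)$ is closed under multiplication by the unimodular rational scalar $i$, one has $\supp(iw_n)=\supp w_n$ and $\|E(iw_n)\|=\|Ew_n\|$ for every interval $E$, so all defining conditions of an R.I.S. are preserved. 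As the same scalar $\xi$ serves every R.I.S. of the block, I obtain simultaneously $\|Tw_n-\xi w_n\|\to0$ and $\|T(iw_n)-\xi(iw_n)\|\to0$. Using only the $\mathbb R$-linearity of $T$,
\[
T(\xi w_n)=a\,Tw_n+b\,T(iw_n)\longrightarrow a\xi w_n+b\xi(iw_n)=\xi^2 w_n,
\]
in the sense that $\|T(\xi w_n)-\xi^2 w_n\|\to0$; while applying $T$ to $\|Tw_n-\xi w_n\|\to0$ and invoking $T^2=-Id$ gives $\|-w_n-T(\xi w_n)\|\to0$. Combining the two estimates yields $|1+\xi^2|\,\|w_n\|\to0$, and since an R.I.S. is seminormalized, $\inf_n\|w_n\|>0$, which forces $\xi^2=-1$.

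It remains to establish boundedness, and here the complex-structure hypothesis provides the promised shortcut. From $\xi_T(\alpha)^2=-1$ we get $\xi_T(\alpha)\in\{i,-i\}$ for every $\alpha$, so the diagonal of $D_T$ is valued in the two-point discrete set $\{i,-i\}$. Recall that $\alpha\mapsto\xi_T(\alpha)$ was shown above to be convergent along every strictly increasing sequence in $\Lambda(\omega_1)$; a convergent sequence valued in $\{i,-i\}$ is eventually constant. I would turn this into a finite-step description: if $\xi_T$ changed value infinitely often, selecting recursively the successive limit ordinals $\delta_1<\delta_2<\cdots$ at which a change occurs would produce a strictly increasing $\omega$-sequence along which $\xi_T(\delta_m)$ alternates between $i$ and $-i$, hence does not converge, contradicting the convergence just recalled. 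Therefore there are finitely many limit ordinals $0=\delta_0<\delta_1<\cdots<\delta_k=\omega_1$ such that $\xi_T$ is constant, say equal to $\epsilon_j i$ with $\epsilon_j\in\{+1,-1\}$, on each interval $J_j=[\delta_{j-1},\delta_j)$; equivalently $D_T=\sum_{j=1}^k\epsilon_j\,i\,P_{J_j}$. Since each $P_{J_j}$ is an interval projection of norm one by bimonotonicity and multiplication by $\epsilon_j i$ is an isometry, this finite sum is bounded, and we simultaneously recover the structural form of $D_T$ asserted in Theorem \ref{bt}.

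The step I expect to require the most care is the identity $\xi_T(\alpha)^2=-1$, precisely because $T$ is only $\mathbb R$-linear and so does not commute with the complex scalar $\xi$. What makes the computation close is the observation that $(iw_n)$ is an R.I.S. governed by the \emph{same} scalar $\xi$, which lets the real and imaginary parts of $\xi$ be handled separately and then recombined into $\xi^2$. The genuinely delicate point is thus the verification that the defining conditions of an R.I.S., as set up in the Appendix, are invariant under multiplication by $i$; this invariance ultimately rests on the $i$-symmetry built into the norming set $\mathcal K_{\omega_1}(\mathbb C)$, which is closed under multiplication by the unimodular rational scalar $i$.
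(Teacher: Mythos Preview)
Your proof is correct, and your derivation of the identity $\xi_T(\alpha)^2=-1$ follows a genuinely different route from the paper's. The paper works operator-theoretically: it uses Corollary~\ref{dia} and the fact that $P^{I_\alpha}T\iota_{\mf X_{I_\alpha}(\mb C)}$ is strictly singular to write $P_{I_\alpha}T\iota_{\mf X_{I_\alpha}(\mb C)}=\xi_T(\alpha)Id+S$ with $S$ strictly singular, then squares this operator and shows $(P_{I_\alpha}T\iota)^2=-Id+S_3$, concluding that $(\xi_T(\alpha)^2+1)Id_{\mf X_{I_\alpha}}$ is strictly singular. You instead stay at the level of sequences, exploiting the $i$-symmetry of the norming set to observe that $(iw_n)$ is again an R.I.S.\ in $\mf X_{I_\alpha}$ governed by the same scalar $\xi$, and then decompose $T(\xi w_n)$ along the real and imaginary parts of $\xi$. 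Your approach is more elementary in that it bypasses Proposition~\ref{opd}, Corollary~\ref{dia}, and the projection calculus entirely; the paper's approach, on the other hand, immediately yields the stronger Remark~\ref{hyp} (it only needs $T^2+Id$ strictly singular rather than $T^2=-Id$), which is later used for the hyperplane result. Your boundedness argument---$\xi_T(\alpha)\in\{i,-i\}$ together with convergence along increasing sequences forces a finite interval decomposition---is the same as the paper's.
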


\begin{proof} Let $T$ be a complex structure on $\mathfrak{X}_{\omega_1}(\mathbb{C})$ and $D_{T}$ the corresponding diagonal operator defined above. Fix  $\alpha\in \Lambda (\omega_1)$. We shall prove that $\xi_{T}(\alpha)^{2}=-1$.  In fact,
\begin{eqnarray*}
 T\circ \iota_{ \mf X_{I_{\alpha}}(\mb C)} &=&   P_{I_{\alpha}}T\circ \iota_{ \mf X_{I_{\alpha}}(\mb C)} +   P^{I_{\alpha}}T\circ \iota_{ \mf X_{I_{\alpha}}(\mb C)} \\
 &=&  P_{I_{\alpha}}T\circ \iota_{ \mf X_{I_{\alpha}}(\mb C)} + S_{1}  
\end{eqnarray*}
where $S_{1} $ is strictly singular. This implies $P_{I_{\alpha}}T\circ\iota_{ \mf X_{I_{\alpha}}(\mb C)}= \xi_{T}(\alpha)Id_{\mf X_{I_{\alpha}} (\mb C)} + S_{2}: \mf X_{I_{\alpha}}(\mb C) \to \mf X_{I_{\alpha}}(\mb C)$ with $S_{2}$ strictly singular.  Now computing:
\begin{eqnarray*}
 (P_{I_{\alpha}}T\iota_{ \mf X_{I_{\alpha}}(\mb C)} )\circ (P_{I_{\alpha}}T \iota_{ \mf X_{I_{\alpha}}(\mb C)} )&=&   P_{I_{\alpha}}T\circ P_{I_{\alpha}}T\iota_{ \mf X_{I_{\alpha}}(\mb C)} \\
 &=&  P_{I_{\alpha}}T\circ  (Id- P^{I_{\alpha}}) T\iota_{ \mf X_{I_{\alpha}}(\mb C)} \\
 &=& P_{I_{\alpha}}T^{2}\iota_{ \mf X_{I_{\alpha}}(\mb C)} - P_{I_{\alpha}}T \underline{P^{I_{\alpha}}T\iota_{ \mf X_{I_{\alpha}}(\mb C)} }\\
 &=& -Id_{\mf X_{I_{\alpha}}(\mb C) } + S_{3}
\end{eqnarray*}
where $S_{3}$ is strictly singular because the underlined operator is strictly singular.  Hence we have that $(\xi_{T}(\alpha)^{2} +1)Id_{ \mf X_{I_{\alpha}} }$ is strictly singular.Which allow us to conclude that $\xi_{T}(\alpha)^{2}=-1$. 
The continuity of $D_T$ is then guaranteed  by  the convergence of $(\xi_{T}(\alpha))_{\alpha\in \Lambda(\omega_1)}$. In deed, we have that there exist ordinal intervals $I_{1}< I_{2}< \ldots < I_{k}$ with $\omega_{1}=\cup_{j=1}^{k} I_{j}$ and such that $D_{T}= \sum_{j=1}^{k}\epsilon_{j}iP_{I_{j}}$ for some signs $(\epsilon_{j})_{j=1}^{n}$.

\end{proof}

\begin{obs} \label{hyp} More generally, the proof of Proposition \ref{dcs} actually shows that if $T$ is a $\mathbb R$-linear bounded operator on $\mathfrak{X}_{\omega_1}(\mathbb{C})$ such that $T^2+Id=S$ for some $S$ strictly singular, then $D_T$ is bounded and $D_T^2=-Id$.
\end{obs}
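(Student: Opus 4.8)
The plan is to rerun the proof of Proposition \ref{dcs} essentially verbatim, isolating the single place where the relation $T^2=-Id$ was used and checking that the weaker hypothesis $T^2+Id=S$ suffices there. First I observe that the construction of the scalars $\xi_T(\alpha)$ and of the diagonal operator $D_T$ given before Proposition \ref{opd}, together with the convergence of $(\xi_T(\alpha))_{\alpha\in\Lambda(\omega_1)}$ established there through Step IV, make no use of any algebraic relation satisfied by $T$; they are available for every bounded $\mb R$-linear operator, and in particular for our $T$. Consequently the only thing left to establish is that $\xi_T(\alpha)^2=-1$ for each $\alpha\in\Lambda(\omega_1)$, after which the concluding argument of Proposition \ref{dcs} applies without change.

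So I fix $\alpha\in\Lambda(\omega_1)$ and abbreviate $\iota=\iota_{\mf X_{I_\alpha}(\mb C)}$. Exactly as in Proposition \ref{dcs}, the Corollary preceding it shows that $P^{I_\alpha}T\iota$ is strictly singular, whence Corollary \ref{dia} gives $P_{I_\alpha}T\iota=\xi_T(\alpha)Id_{\mf X_{I_\alpha}(\mb C)}+S_2$ with $S_2$ strictly singular. Squaring this operator and replacing the inner $P_{I_\alpha}$ by $Id-P^{I_\alpha}$ yields
\[ (P_{I_\alpha}T\iota)^2 = P_{I_\alpha}T^2\iota - P_{I_\alpha}T\,P^{I_\alpha}T\iota, \]
in which the second summand is strictly singular because $P^{I_\alpha}T\iota$ is.

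The one modification occurs in the first summand. Where Proposition \ref{dcs} substituted $T^2=-Id$, I instead substitute $T^2=-Id+S$, obtaining
\[ P_{I_\alpha}T^2\iota = P_{I_\alpha}(-Id+S)\iota = -Id_{\mf X_{I_\alpha}(\mb C)} + P_{I_\alpha}S\iota, \]
and $P_{I_\alpha}S\iota$ is strictly singular since $S$ is and strict singularity is preserved under composition with bounded operators on either side. Hence $(P_{I_\alpha}T\iota)^2=-Id_{\mf X_{I_\alpha}(\mb C)}+S_3$ with $S_3$ strictly singular, precisely as before. Comparing with $(\xi_T(\alpha)Id+S_2)^2=\xi_T(\alpha)^2Id+S_4$, where $S_4$ is strictly singular, forces $(\xi_T(\alpha)^2+1)Id_{\mf X_{I_\alpha}(\mb C)}$ to be strictly singular, and therefore $\xi_T(\alpha)^2=-1$.

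Finally, each $\xi_T(\alpha)\in\{i,-i\}$, and by the convergence property from Step IV every strictly increasing sequence $(\xi_T(\alpha_n))_n$ converges; since the values are confined to the two-point set $\{i,-i\}$, the function $\alpha\mapsto\xi_T(\alpha)$ can switch value only finitely often, for an infinite number of switches would yield an alternating, hence non-convergent, subsequence. Thus $D_T=\sum_{j=1}^k\epsilon_j iP_{I_j}$ for finitely many consecutive intervals $I_1<\cdots<I_k$ covering $\omega_1$ and suitable signs $\epsilon_j$; this operator is plainly bounded, and as the $P_{I_j}$ are mutually orthogonal projections summing to $Id$,
\[ D_T^2=\sum_{j=1}^k\epsilon_j^2 i^2P_{I_j}=-\sum_{j=1}^k P_{I_j}=-Id. \]
I do not expect a genuine obstacle here: the entire argument is a transcription of Proposition \ref{dcs}, and the only point requiring attention is the verification that the extra error term $P_{I_\alpha}S\iota$ is strictly singular, which is immediate from the ideal property of strictly singular operators.
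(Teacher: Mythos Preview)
Your proposal is correct and follows exactly the approach the paper intends: the Remark in the paper does not supply a separate proof but simply asserts that the argument of Proposition~\ref{dcs} goes through under the weaker hypothesis, and you have written out precisely that verification, pinpointing the single place where $T^2=-Id$ enters and replacing it by $T^2=-Id+S$ together with the ideal property of strictly singular operators.
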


Now we can conclude the proof of Theorem \ref{bt}.
\begin{proof}[Proof of Theorem \ref{bt}] Let $T: \mathfrak{X}_{\omega_1}(\mathbb {C} ) \to \mathfrak{X}_{\omega_1}(\mathbb{C}) $ be  a bounded  $\mathbb R$-linear operator which is a complex structure and $D_{T}$ be the diagonal bounded operator  associated to it.  It only  remains to prove that $T-D_{T}$ is strictly singular.  And this follows directly from Proposition \ref{opd}, because by definition $\lim_{n} (T-D_{T})y_{n}=0$ for every  $(y_{n})_{n}$ $R.I.S$ on $\mathfrak{X}_{\omega_1}(\mathbb {C} )$.
\end{proof}

We come back to the study of the complex structures on $\mathfrak{X}_{\omega_{1}}(\mathbb{C})$. Denote by $\mathfrak{D}$ the family of complex structures $D_T$ on $\mathfrak{X}_{\omega_{1}}(\mathbb{C})$ as in Theorem \ref{bt}, i. e., $D_{T}=  \sum_{j=1}^{k}\epsilon_{j}iP_{I_{j}} $   where $(\epsilon_{j})_{j=1}^{k}$  are signs   and   $I_{1}< I_{2}< \ldots < I_{k}$ are ordinal intervals whose extremes are limit ordinals  and such that  $\omega_{1}=\cup_{j=1}^{k} I_{j}$. Notice that  $\mathfrak{D}$ has cardinality $\omega_{1}$.

\

Recall that two spaces are said to be incomparable if neither of them embed into the other. 

\begin{cor} The space  $\mathfrak{X}_{\omega_{1}}(\mathbb{C})$ has $\omega_{1}$ many complex structures up to isomorphism.  Moreover any two non-isomorphic complex structures are incomparable.
\end{cor}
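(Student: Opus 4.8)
The plan is to reduce the problem to the explicit family $\mf D$ and then separate its members up to isomorphism. First I would record the reduction: by Theorem \ref{bt} every complex structure $T$ on $\mf X_{\omega_1}(\mb C)$ satisfies $T = D_T + S$ with $D_T \in \mf D$ and $S$ strictly singular, so $T$ and $D_T$ are two complex structures whose difference is strictly singular. By the theorem of Ferenczi recalled in the introduction \cite{F}, such complex structures are equivalent, i.e. $\mf X_{\omega_1}(\mb C)^T \cong_{\mb C} \mf X_{\omega_1}(\mb C)^{D_T}$. Hence every complex structure is isomorphic to one coming from $\mf D$, and since $|\mf D| = \omega_1$ this already yields at most $\omega_1$ complex structures up to isomorphism. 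It then remains to produce $\omega_1$ of them that are pairwise non-isomorphic, and to upgrade ``non-isomorphic'' to ``incomparable''.

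Next I would describe each structure concretely. For $D = \sum_{j=1}^k \epsilon_j i P_{I_j} \in \mf D$ the complex space $\mf X_{\omega_1}(\mb C)^D$ decomposes as the $\mb C$-linear direct sum $\bigoplus_{j=1}^k Z_j$, where $Z_j = \mf X_{I_j}(\mb C)$ if $\epsilon_j = +1$ and $Z_j = \overline{\mf X_{I_j}(\mb C)}$ if $\epsilon_j = -1$. Because the intervals $I_1 < \dots < I_k$ partition $\omega_1$, exactly one of them is uncountable (the tail) and the others are countable, so $\mf X_{\omega_1}(\mb C)^D$ has exactly one non-separable summand and finitely many separable ones. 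For the lower bound I would fix, for each countable limit ordinal $\alpha$, the structure $D_\alpha$ cutting at $\alpha$ (intervals $[0,\alpha)$ and $[\alpha,\omega_1)$ with a fixed sign pattern), whose separable summand is essentially $\mf X_{[0,\alpha)}(\mb C)$. The claim driving the count is that $\{\mf X_{[0,\alpha)}(\mb C) : \alpha < \omega_1 \text{ limit}\}$ realizes uncountably many isomorphism classes; in fact I expect $\alpha < \beta$ to force $\mf X_{[0,\beta)}(\mb C)$ not to embed into $\mf X_{[0,\alpha)}(\mb C)$, so that the $D_\alpha$ form a strictly increasing chain under embeddability and are pairwise non-isomorphic.

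The rigidity needed for both non-isomorphism and incomparability comes from the operator analysis already available. Given any $\mb R$-linear embedding $U$ realizing a comparison $\mf X_{\omega_1}(\mb C)^D \hookrightarrow \mf X_{\omega_1}(\mb C)^{D'}$ (equivalently a bounded $U$ with $UD = D'U$ that is an isomorphism onto its range), the construction of the diagonal $\xi_U$ together with Proposition \ref{opd} and Corollary \ref{dia} writes $U = D_U + S$ with $S$ strictly singular. Since $U$ is an isomorphism onto its range, $\xi_U$ is nonzero on the relevant blocks, and Step II (each block-generated subspace is real-H.I.) forces $U$ to act on each $Z_j$ as a nonzero scalar plus a strictly singular map into a single matching block of the target. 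Comparing densities pairs the unique non-separable block of the domain with that of the target; the intertwining relation, which passes to the diagonal as $D_U D - D' D_U$ strictly singular, forces the signs on matched blocks to agree; and the interval-length (non-embedding) considerations then force the combinatorial data of $D$ and $D'$ to coincide. Thus a two-sided comparison yields an isomorphism, i.e. non-isomorphic structures are incomparable.

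The main obstacle is precisely the structural input about the building blocks, which is not formal from the operator theorem: that the initial segments $\mf X_{[0,\alpha)}(\mb C)$ give $\omega_1$ distinct isomorphism types (equivalently the non-embedding $\mf X_{[0,\beta)}(\mb C) \not\hookrightarrow \mf X_{[0,\alpha)}(\mb C)$ for $\alpha < \beta$), and that a block $\mf X_I(\mb C)$ is never isomorphic to its conjugate $\overline{\mf X_I(\mb C)}$. Both must be extracted from the fine combinatorics of the norming set $\mc K_{\omega_1}(\mb C)$ — the ordinal coding of the special sequences, which in the complex version is exactly what breaks conjugation symmetry — in the spirit of the subspace analysis of \cite{ALT}. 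Once these two facts are secured, the reduction of the first paragraph and the rigidity of the third close the argument and pin the number of complex structures at exactly $\omega_1$.
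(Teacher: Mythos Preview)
Your reduction to $\mf D$ via Theorem~\ref{bt} and Ferenczi's result is exactly what the paper does, and the observation $|\mf D|=\omega_1$ gives the correct upper bound. The key idea you need is also present in your third paragraph: the intertwining relation $TD=D'T$ passes to the diagonal part of $T$ and forces signs to agree. But you have buried this under a superstructure that is not needed, and you end up listing as ``obstacles'' two substantial facts (non-embedding between initial segments $\mf X_{[0,\alpha)}(\mb C)$, and $\mf X_I(\mb C)\not\cong\overline{\mf X_I(\mb C)}$) that the paper never proves and never uses.

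The paper's argument is entirely local and avoids all of this. Given $D\neq D'$ in $\mf D$, there is some $I_\alpha=[\alpha,\alpha+\omega)$ on which the two signs differ, say $D|_{\mf X_{I_\alpha}}=iId$ and $D'|_{\mf X_{I_\alpha}}=-iId$. If $T:\mf X_{\omega_1}(\mb C)^{D}\hookrightarrow\mf X_{\omega_1}(\mb C)^{D'}$ is any $\mb C$-linear embedding, then $TD=D'T$ as $\mb R$-linear maps. Corollary~\ref{dia} applied to $T|_{\mf X_{I_\alpha}}$ gives $T|_{\mf X_{I_\alpha}}=\xi_T(\alpha)\iota_{\mf X_{I_\alpha}}+S$ with $S$ strictly singular; feeding this into the intertwining relation yields that $\xi_T(\alpha)\bigl(D-D'\bigr)|_{\mf X_{I_\alpha}}=2i\,\xi_T(\alpha)\,Id_{\mf X_{I_\alpha}}$ is strictly singular, hence $\xi_T(\alpha)=0$. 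Thus $T|_{\mf X_{I_\alpha}}$ is strictly singular, contradicting that $T$ is an embedding. This single argument shows simultaneously that \emph{every} pair of distinct elements of $\mf D$ is incomparable (in both directions), and in particular non-isomorphic, so $|\mf D|=\omega_1$ is also the exact count.

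Note in particular that there is no ``matching of blocks'' to carry out: the diagonal $D_U$ produced by the operator theorem is diagonal with respect to the fixed decomposition $\bigoplus_{\alpha\in\Lambda(\omega_1)}\mf X_{I_\alpha}$, so it automatically sends each $\mf X_{I_\alpha}$ to itself. Your density and interval-length considerations, and the two ``obstacles'' in your last paragraph, can simply be deleted.
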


\begin{proof} 
Let $J$ be a complex structure on $\mathfrak{X}_{\omega_{1}}(\mathbb{C})$. By Theorem \ref{bt} we have that $J$ is equivalent to one of the complex structures of the family $\mathfrak{D}$.

To complete the proof it is enough to show that given two different elements of $\mathfrak{D}$ they define non equivalent complex structures. Moreover, we prove that one structure does not embed into the other.  Fix $ J \neq K \in \mathfrak{D}$. Then  there exists an ordinal interval $I_{\alpha}=[\alpha,\alpha+\omega)$ such that,  without loss of generality,  $J|_{\mathfrak{X}_{I_{\alpha}}}= i Id|_{\mathfrak{X}_{I_{\alpha}}}$ and $K|_{\mathfrak{X}_{I_{\alpha}}}= -i Id|_{\mathfrak{X}_{I_{\alpha}}}$. Suppose that there exists $T: \mathfrak{X}_{\omega_{1}}(\mathbb{C})^{J} \rightarrow \mathfrak{X}_{\omega_{1}}(\mathbb{C})^{K}$ an isomorphic embedding. Then T is in particular a $\mb R$-linear operator such that $TJ = KT$.  We write using Corollary \ref{dia},  $T|_{\mathfrak{X}_{I_{\alpha}}}= \xi_{T}(\alpha)\iota_{\mf X_{I_{\alpha}} (\mb C)} +S$ with $S$ strictly singular. Then $\xi_{T}(\alpha) J|_{\mathfrak{X}_{I_{\alpha}}} -  \xi_{T}(\alpha) K|_{\mathfrak{X}_{I_{\alpha}}}=S_1$ where $S_1$ is strictly singular. Im particular for each $x\in \mathfrak{X}_{I_{\alpha}}$,  $S_1x=  2\xi_{T}(\alpha)i x$. It follows from the fact that $\mathfrak{X}_{I_{\alpha}}$ is infinite dimensional that $\xi_{T}(\alpha)=0$. Hence  $T|_{\mathfrak{X}_{I_{\alpha}}}= S$ but this a contradiction because $T$ is an isomorphic embedding.
\end{proof}

The next corollary offers uncountably many examples of Banach spaces with exactly countably many complex structures. 

\begin{cor}The space  $\mathfrak{X}_{\gamma}(\mathbb{C})$  has $\omega$ complex structures up to isomorphism  for every limit ordinal  $\omega^2 \leq \gamma < \omega_1$.
\end{cor}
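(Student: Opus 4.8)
The plan is to reproduce, inside the $1$-complemented subspace $\mathfrak{X}_{\gamma}(\mathbb{C})=\mathfrak{X}_{[0,\gamma)}(\mathbb{C})$, the whole analysis leading to Theorem \ref{bt}, and then to count the resulting diagonal complex structures supported on $[0,\gamma)$. Since $\mathfrak{X}_{\gamma}(\mathbb{C})$ is generated by an initial segment of the basis, every block sequence of $\mathfrak{X}_{\gamma}(\mathbb{C})$ is a block sequence of $\mathfrak{X}_{\omega_1}(\mathbb{C})$, so Steps I--IV, together with Proposition \ref{opd}, Corollary \ref{dia} and Proposition \ref{dcs}, all hold verbatim with $\omega_1$ replaced by $\gamma$.

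First I would apply this transferred machinery to an arbitrary complex structure $T$ on $\mathfrak{X}_{\gamma}(\mathbb{C})$. As $\gamma$ is a limit ordinal, $[0,\gamma)$ is the disjoint union of the atoms $I_{\alpha}=[\alpha,\alpha+\omega)$ with $\alpha\in\Lambda(\gamma)$, and the construction of $\xi_{T}$ on $\Lambda(\gamma)$ produces a bounded diagonal complex structure $D_{T}$ with $T-D_{T}$ strictly singular. By the argument of Proposition \ref{dcs} one has $\xi_{T}(\alpha)^{2}=-1$, and the convergence of $(\xi_{T}(\alpha))_{\alpha}$ forces $D_{T}=\sum_{j=1}^{k}\epsilon_{j}iP_{I_{j}}$ for finitely many intervals $I_{1}<\cdots<I_{k}$ with limit-ordinal endpoints and $[0,\gamma)=\bigcup_{j}I_{j}$. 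Here the non-unconditionality of the basis is exactly the point: only finitely many sign changes yield a bounded operator, so just these finite combinations occur. By Ferenczi's result quoted in the introduction (two complex structures differing by a strictly singular operator are equivalent), $T$ is equivalent to $D_{T}$. Thus every complex structure on $\mathfrak{X}_{\gamma}(\mathbb{C})$ is equivalent to a member of the family $\mathfrak{D}_{\gamma}$ of such diagonal operators; conversely, since the $I_{j}$ are disjoint, each such $D$ satisfies $D^{2}=-\sum_{j}P_{I_{j}}=-Id$, so every member of $\mathfrak{D}_{\gamma}$ is a genuine complex structure.

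Next I would count $\mathfrak{D}_{\gamma}$ and check that its members are pairwise incomparable. A member of $\mathfrak{D}_{\gamma}$ is determined by finitely many breakpoints among $\Lambda(\gamma)$ and a sign on each resulting interval; because $\omega^{2}\le\gamma<\omega_{1}$, the set $\Lambda(\gamma)$ is countably infinite, so $\mathfrak{D}_{\gamma}$ is countably infinite, and hence it has cardinality $\omega$. For incomparability I would copy the preceding corollary: if $J\neq K$ in $\mathfrak{D}_{\gamma}$, they disagree on some atom $I_{\alpha}$, so without loss of generality $J|_{\mathfrak{X}_{I_{\alpha}}}=iId$ and $K|_{\mathfrak{X}_{I_{\alpha}}}=-iId$; any $\mathbb{R}$-linear isomorphic embedding $T:\mathfrak{X}_{\gamma}(\mathbb{C})^{J}\to\mathfrak{X}_{\gamma}(\mathbb{C})^{K}$ satisfies $TJ=KT$, and writing $T|_{\mathfrak{X}_{I_{\alpha}}}=\xi_{T}(\alpha)\iota_{\mathfrak{X}_{I_{\alpha}}(\mathbb{C})}+S$ via Corollary \ref{dia} gives $2\xi_{T}(\alpha)i\,Id_{\mathfrak{X}_{I_{\alpha}}}$ strictly singular, whence $\xi_{T}(\alpha)=0$ and $T|_{\mathfrak{X}_{I_{\alpha}}}$ is strictly singular, contradicting that $T$ is an embedding. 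Therefore distinct members of $\mathfrak{D}_{\gamma}$ are incomparable, and $\mathfrak{X}_{\gamma}(\mathbb{C})$ has exactly $\omega$ complex structures up to isomorphism.

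The main obstacle I anticipate is the transfer in the first two paragraphs: one must verify carefully that the relative projections $P_{I},P^{I}$ restricted to $\mathfrak{X}_{\gamma}(\mathbb{C})$ behave as in the ambient space, that R.I.S.\ and the real-H.I.\ property are genuinely available inside $\mathfrak{X}_{\gamma}(\mathbb{C})$, and above all that $D_{T}$ remains bounded, i.e.\ that $\xi_{T}$ changes value only finitely often. The first two points are immediate from the fact that block sequences of the initial segment are block sequences of the whole basis, while the boundedness is the genuine content inherited from the convergence argument. The two numerical hypotheses then play complementary roles: $\gamma<\omega_{1}$ makes $\Lambda(\gamma)$ countable, giving the upper bound of $\omega$, whereas $\omega^{2}\le\gamma$ makes $\Lambda(\gamma)$ infinite, giving the lower bound; once boundedness of $D_{T}$ is settled, the counting and incomparability steps are routine adaptations of the preceding corollary.
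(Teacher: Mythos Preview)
Your proof is correct, but it takes a different route from the paper. The paper's argument is more economical: given a complex structure $J$ on $\mathfrak{X}_{\gamma}(\mathbb{C})$, it \emph{extends} $J$ to a complex structure $T=JP_{I}+iP^{I}$ on the whole space $\mathfrak{X}_{\omega_{1}}(\mathbb{C})$ (with $I=[0,\gamma)$), applies Theorem~\ref{bt} directly to $T$ to obtain $T=D_{T}+S$, observes that $D_{T}$ must equal $iId$ on $\mathfrak{X}^{I}$ (else some $S|_{\mathfrak{X}_{I_{\alpha}}}=2iId$ would fail to be strictly singular), and then restricts back to $\mathfrak{X}_{\gamma}(\mathbb{C})$. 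You instead re-run the entire machinery of Steps~I--IV, Proposition~\ref{opd}, Corollary~\ref{dia} and Proposition~\ref{dcs} inside the initial segment. Both approaches reach the same diagonal form and then the counting and incomparability arguments coincide. The paper's extension trick avoids re-verifying the transfer you flag as the ``main obstacle'', using Theorem~\ref{bt} as a black box; your localization argument, on the other hand, makes explicit that the structural theory (R.I.S., real-H.I., convergence of $\xi_{T}$) is genuinely intrinsic to each initial segment, which is conceptually informative even if it requires more checking.
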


\begin{proof} Let $J$ be a complex structure on $\mathfrak{X}_{\gamma}(\mathbb{C}) $. We extend $J$ to a complex structure defined in the whole space $\mathfrak{X}_{\omega_{1}}(\mathbb{C})$ by setting $T= JP_{I} + iP^{I}$, where $I=[0, \gamma)$.  It follows that $T= D_{T} +S$  for an strictly singular operator $S$ and a diagonal operator $D_{T}$ like in Theorem \ref{bt}. Notice that $D_{T}x=ix$ for every $x\in \mathfrak{X}^{I}$, otherwise there would be a limit ordinal $\alpha$ such that $S|_{\mathfrak X_{I_{\alpha}}}= 2iId|_{\mathfrak X_{I_{\alpha}}}$.  Hence $JP_{I}= D_{T}P_{I} + S$.  Which implies that  $J$ has the form $J= \sum_{j=1}^{k}\epsilon_{j}iP_{I_{j}} + S_{1}$   where $S_{1}$ is strictly singular on $\mathfrak{X}_{\omega_1}(\mathbb{C})$,  $(\epsilon_{j})_{j=1}^{k}$  are signs   and   $I_{1}< I_{2}< \ldots < I_{k}$ are ordinal intervals whose extremes are limit ordinals  and such that  $\gamma=\cup_{j=1}^{k} I_{j}$. Now the rest of the proof is identical to the proof of the previous corollary. In particular, all the non-isomorphic complex structures on $\mathfrak{X}_{\gamma}(\mathbb{C})$ are incomparable.
\end{proof}

We also have, using the same proof of the previous corollary, that for every increasing sequence of limit ordinals $A=(\alpha_n)_n$, the space $\mf X_A= \bigoplus_{n} \mf X_{I_{\alpha_n}}(\mb C)$, where $I_{\alpha_n}=[\alpha_n, \alpha_n+\omega)$, has exactly infinite countably many different complex structures. Hence there exists a family, with the cardinality of the continuum,  of Banach spaces   such that every  space in it has  exactly $\omega$ complex structures.  

\section{Question and Observations}

Is easy to check that subspaces of even codimension of a real Banach space with complex structure also admit complex structure. An interesting property of $\mathfrak{X}_{\omega_{1}}(\mathbb{C})$ is that any of its real hyperplanes (and thus every real subspace of odd codimension) do not admit complex structure.

\begin{pro} The real hyperplanes of $\mathfrak{X}_{\omega_{1}}(\mathbb{C})$ do not admit complex structure.
\end{pro}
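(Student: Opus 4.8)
The plan is to convert a hypothetical complex structure on a hyperplane into a statement about Fredholm indices and then extract a parity contradiction from the rigidity already established. Write $H=\ker\phi$ for a nonzero bounded $\mathbb{R}$-linear functional $\phi$, fix $v\notin H$ so that $\mathfrak{X}_{\omega_1}(\mathbb{C})=H\oplus\mathbb{R}v$, and let $\iota\colon H\hookrightarrow\mathfrak{X}_{\omega_1}(\mathbb{C})$ be the inclusion and $P$ the bounded projection onto $H$ with kernel $\mathbb{R}v$. Suppose, towards a contradiction, that $I\colon H\to H$ is a complex structure, $I^2=-Id_H$. First I would push $I$ up to the whole space by setting $\hat I=\iota IP$ and compute $\hat I^2=\iota I^2P=-\iota P=-Id+Q$, where $Q=Id-\iota P$ is the rank-one projection onto $\mathbb{R}v$. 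Thus $\hat I^2+Id=Q$ is finite rank, hence strictly singular, and Remark \ref{hyp} applies: the associated diagonal operator $D:=D_{\hat I}$ is a bounded complex structure on $\mathfrak{X}_{\omega_1}(\mathbb{C})$, and $S:=\hat I-D$ is strictly singular.

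The crucial observation is the choice of the right complex structure on the ambient space. Since $\hat I\iota=\iota IP\iota=\iota I$, restricting $\hat I=D+S$ to $H$ gives $\iota I-D\iota=S\iota$, which is strictly singular. In other words, $\iota$ is $\mathbb{C}$-linear modulo strictly singular when $H$ carries $I$ and $\mathfrak{X}_{\omega_1}(\mathbb{C})$ carries $D$. I would then split $\iota$ into its $\mathbb{C}$-linear and conjugate-linear parts with respect to $(I,D)$, namely $T_{\mathbb{C}}=\frac12(\iota-D\iota I)$ and $T_a=\frac12(\iota+D\iota I)$, so that $T_{\mathbb{C}}I=DT_{\mathbb{C}}$ and $T_aI=-DT_a$. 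Using $\iota I=D\iota+S\iota$ together with $D^2=-Id$ one finds $T_a=\frac12 DS\iota$, which is strictly singular; hence $\iota=T_{\mathbb{C}}+T_a$ with $T_{\mathbb{C}}$ $\mathbb{C}$-linear from $H^{I}$ to $\mathfrak{X}_{\omega_1}(\mathbb{C})^{D}$ and $T_a$ strictly singular.

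To finish I would run a Fredholm-index parity argument. As a real operator $\iota$ is Fredholm with $\ker\iota=0$ and $\ran\iota=H$ of real codimension $1$, so its real index is $-1$. Since $T_a$ is strictly singular, and strictly singular operators form the perturbation class of the Fredholm operators, $T_{\mathbb{C}}=\iota-T_a$ is again real-Fredholm of index $-1$. But $T_{\mathbb{C}}$ is $\mathbb{C}$-linear for the structures $I$ and $D$, so its kernel and cokernel are complex vector spaces; their real dimensions are even, and the real index of $T_{\mathbb{C}}$ equals twice its complex index, hence is even. This contradicts $-1$, and therefore no such $I$ can exist.

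The step I expect to be the real point, and the only place where genuine insight is needed, is recognizing that the inclusion cannot be compared to the native complex structure given by multiplication by $i$ — against which it is in general not essentially $\mathbb{C}$-linear, since $D-iId$ is a nonzero diagonal projection whenever some $\epsilon_j=-1$ — but must instead be compared to the specific diagonal structure $D$ produced by Remark \ref{hyp}. Only with this choice does the conjugate-linear part collapse to a strictly singular operator, allowing the odd real index $-1$ of the inclusion to clash with the evenness forced by complex linearity. Everything else (the computation of $\hat I^2$, the identities for $T_{\mathbb{C}}$ and $T_a$, and the invariance of the index under strictly singular perturbations) is routine.
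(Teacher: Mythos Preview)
Your proof is correct. The paper takes a much shorter route: it quotes \cite[Proposition 13]{FG2}, which reduces the problem to verifying the \emph{lifting property} for the ideal of real strictly singular operators on $\mathfrak{X}_{\omega_1}(\mathbb{C})$ (i.e.\ every $T$ with $T^2+Id$ strictly singular differs from a genuine complex structure by a strictly singular operator), and then observes that Remark \ref{hyp} together with the argument of Theorem \ref{bt} gives exactly this.

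Your argument is genuinely different in presentation, though it rests on the same structural input. Rather than invoking the Ferenczi--Galego black box, you unfold what is essentially the index--parity mechanism behind it: extend $I$ to $\hat I$ on the whole space, use Remark \ref{hyp} to obtain the diagonal complex structure $D$ with $\hat I-D$ strictly singular, and then observe that the inclusion $\iota$ becomes $\mathbb{C}$-linear from $H^{I}$ to $\mathfrak{X}_{\omega_1}(\mathbb{C})^{D}$ modulo a strictly singular operator. The contradiction between the odd real Fredholm index of $\iota$ and the even index forced by complex linearity of $T_{\mathbb{C}}$ is exactly the obstruction encoded abstractly in the lifting criterion. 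What you gain is a self-contained proof that does not require the reader to look up \cite{FG2}; what the paper gains is brevity. Your emphasis that the comparison must be made against $D$ and not against the native $iId$ is the right conceptual point, and it is implicit in the paper's approach as well, since the lifting is precisely $T\mapsto D_T$.
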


\begin{proof}By the results of Ferenczi and E. Galego  \cite [Proposition 13]{FG2}  it is sufficient to prove that the ideal of all $\mb R$-linear strictly singular operators on $\mathfrak{X}_{\omega_{1}}(\mathbb{C})$ has the  lifting property, that is, for any $\mb R$-linear isomorphism on $\mathfrak{X}_{\omega_{1}}(\mathbb{C})$  such that $T^2+Id$ is strictly singular, there exists a strictly singular operator $S$ such that $(T-S)^2=-Id$. The proof now follows easily from the Remark \ref{hyp}.
\end{proof}

We now pass to present some open questions related to the results exposed in this paper. The first question is about a remark mentioned in the introduction and the Ferenczi's space $X(\mb C)$  with exactly two complex structures. 

\

{\bf Question 1.}
 For every $1\leq p <\infty$. How many complex structures has $\ell_p(X(\mb C))$?

\

Clearly the space $\mathfrak{X}_{\omega_{1}}(\mathbb{C})$  is non separable. Hence a natural question is:

\

{\bf Question 2.}
Does  there exist a separable Banach space with exactly $\omega_1$ complex structures?

\

{\bf Question 3.}
  Does there exist for every infinite cardinal $\kappa$  a Banach space with $\kappa$-many non-equivalent complex structures?
  
\

One open problem  in the theory of complex structure is to know if the existence of more regularity in the space guarantees that it admits unique complex structure.

\

{\bf Question 4.} 
Does there exist a real Banach space with unconditional basis admitting more than one complex structure?

\

The question is still interesting in spaces with even more regularity than an unconditional basis. For example, when  a real Banach space $X$ has a symmetric basis. In this case, $X$  admits at least one  complex structure, because it is isomorphic to its square. 

\

{\bf Question 5}
Does every real Banach space with symmetric basis have unique complex structure?

\

Question 5 is is strongly related with the well-known open problem:  \emph{Is every Banach space $X$, with a symmetric basis, primary?}.  In fact, a positive answer to this problem implies a positive solution for Question 5.  We just have to note that the complexification of a space with symmetric basis has  a symmetric basis, and recall  Kalton's result:  a Banach space such that its complexification is a primary space has unique complex structure.  

\section{Appendix}

The purpose of this section is to give a proof for the results in the Step I, II and III. Several proofs are very similar to the corresponding ones in \cite{ALT}.  In order to make this paper as self contained as possible, we reproduce them in detail.

\

First we clarify the definition of the norming set by defining what being a special sequence means. All the definitions we present in this part are the corresponding translation of \cite{ALT} for the complex case.

\subsection{Coding and Special sequences}

 Recall that $[\omega_1]^2= \{ (\alpha, \beta)\in \omega_1^2 \, : \, \alpha< \beta\}$.
 \begin{defi} A function $\varrho:[\omega_1]^2 \rightarrow \omega$ such that 
 
 \begin{enumerate}
 \item $\varrho (\alpha, \gamma) \leq \max\{\varrho(\alpha, \beta), \varrho(\beta, \gamma) \}$ for all $\alpha< \beta< \gamma<\omega_1$.
 
 \item $\varrho (\alpha, \beta) \leq \max\{\varrho(\alpha, \gamma), \varrho(\beta, \gamma) \}$ for all $\alpha< \beta< \gamma<\omega_1$.
 
 \item The set $\{ \alpha<\beta :  \varrho(\alpha,\beta)\leq n\}$ is finite for all $\beta<\omega_1$ and $n\in \mathbb N$
 
 \end{enumerate}
 is called a $\varrho$-function.
 \end{defi}
 The existence of $\varrho$-functions is due to Todorcevic  \cite{T}. Let us fix  a $\varrho$-function $\varrho:[\omega_1]^2 \rightarrow \omega$  and all the following  work relies on that particular choice of $\varrho$.
 
  \begin{defi} Let $F$  be a finite subset of $\omega_1$ and $p\in \mathbb N$ , we write 
 \[\rho_F=\rho_{\varrho}(F)= \max_{\alpha, \beta \in F}\varrho(\alpha, \beta).\] 
 \[ \overline{F}^p =\{ \alpha\leq \max F: \text{ there is $\beta \in F$ such that } \alpha\leq \beta  \text{ and }   \,\varrho(\alpha, \beta)\leq p\}
 \]
 \end{defi}
 
\textbf{$\sigma_{\varrho}$-coding and the special sequences}

We denote by $\mathbb Q_s(\omega_1, \mathbb C)$ the set of finite sequences $(\phi_1, w_1, p_1,  \ldots , \phi_d, w_d, p_d)$ such that 
  
  \begin{enumerate}
  \item For all  $i\leq d$,  $\phi_i\in c_{00}(\omega_1, \mathbb C)$ and  for all  $\alpha< \omega_1$  the real and the imaginary part of $\phi(\alpha)$ are rationals. 
  \item $(w_i)_{i=1}^d, (p_i)_{i=1}^d \in \mathbb N^d$ are strictly increasing sequences.
  \item $p_i\geq \rho_{( \cup_{k=1}^i \supp \phi_k )}$ for every $i\leq d$.
  \end{enumerate}
 
Let $\mathbb Q_s(\mathbb C)$ be the set of finite sequences $(\phi_1, w_1, p_1, \phi_2, w_2, p_2,  \ldots , \phi_d,$  $ w_d, p_d)$ satisfying properties  (1), (2) above and for every $i\leq d$, $\phi_i \in c_{00}(\omega, \mathbb C)$. Then $\mathbb Q_s(\mathbb C)$ is a countable set while $\mathbb Q_s(\omega_1, \mathbb C)$ has cardinality $\omega_1$. Fix a one to one function $\sigma:  \mathbb Q_s(\mathbb C) \rightarrow \{ 2j: j  \text{ is odd}\}$ such that 
 
 \[  \sigma (\phi_1, w_1, p_1, \ldots , \phi_d, w_d, p_d)> \max \{ p_d^2, \frac{1}{\epsilon^2}, \max \supp \phi_d\}
  \]
where $\epsilon = \min \{ | \phi_k(e_{\alpha})| : \alpha \in \supp \phi_k, \, k=1, \ldots, d\}$.  Given a finite subset $F$ of $\omega_1$, we denote by $\pi_F: \{ 1, 2, \ldots, \#F\} \rightarrow F$ the natural order preserving map, i.e.  $\pi_F$ is the increasing numeration of $F$.

\

Given $\Phi =(\phi_1, w_1, p_1, \ldots , \phi_d, w_d, p_d)\in \mathbb Q_s(\mathbb C)$, we set
  \[ G_{\Phi}= \overline{\cup_{i=1}^d \supp \phi_i}^{p_d}.
  \]
Consider the family $\pi_{G_{\Phi}}(\Phi)= (\pi_G(\phi_1), w_1, p_1, \pi_G(\phi_2), w_2, p_2, \ldots , \pi_G(\phi_d),$ $ w_d,  p_d)$ where
  
$$ \pi_G(\phi_k)(n)  = \left\{
\begin{array}{ll}
\displaystyle \phi_k(\pi_{G_{\Phi}} (n)), \quad \text{if} \, \, n\in G_{\Phi}  \\ \\
\displaystyle 0, \quad \text{otherwise.} \, \, 
\end{array}
\right.
$$
Finally $\sigma_p: \mathbb Q_s(\omega_1, \mathbb C) \rightarrow \{ 2j: \text{ $j$ odd}\}$ is defined by  $\sigma_p(\Phi)= \sigma(\pi_G(\Phi))$.

\begin{defi}\label{tp1} A sequence $\Phi= (\phi_1, \phi_2, \ldots, \phi_{n_{2j+1}})$ of functionals of  $ \mathcal{K}_{\omega_1}(\mathbb{C})$  is called  a $2j+1$ special sequence  if 

\

(SS 1.) $\supp \phi_1 < \supp \phi_2< \cdots< \supp \phi_{n_{2j+1}} $. For each $k\leq n_{2j+1}$, $\phi_k$ is of type $I$, $w(\phi_k)=m_{2j_k}$  with $j_1$ even and $m_{2j_1}> n_{2j+1}^2$.

\

(SS 2.) There exists a strictly increasing sequence $(p_1^{\Phi}, p_2^{\Phi}, \ldots, p_{n_{2j+1}-1}^{\Phi} )$  of naturals numbers such that for all $1\leq i\leq n_{2j+1}-1$ we have that $w(\phi_{i+1})= m_{\sigma_{\varrho}(\Phi_i)}$ where
\[  \Phi_i= (\phi_1, w(\phi_1), p_1^{\Phi}, \phi_2, w(\phi_2), p_2^{\Phi}, \ldots, \phi_i, w(\phi_i), p_i^{\Phi})
\]
\end{defi}

Special sequences in separable examples with  one to one codings are in general simpler: they are of the form $(\phi_{1}, w(\phi_{1}), \ldots , \phi_{k}, w(\phi_{k}))$. Their main feature  is that if  $(\phi_{1}, w(\phi_{1}), \ldots , \phi_{k}, w(\phi_{k}))$ and $(\psi_{1}, w(\psi_{1}), \ldots , \psi_{l}, w(\psi_{l}))$ are two of them, there exists  $i_{o}\leq \min \{ k,l\}$  with the property that 

\begin{eqnarray}\label{ss2} 
(\phi_{i}, w(\phi_{i}))= (\psi_{i}, w(\psi_{i})) \, \text{ for all} \, i\leq i_{0}\\
\{ w(\phi_{i}) \, : \, i_{0}\leq i \leq k\} \cap  \{ w(\psi_{i}) \, : \, i_{0}\leq i \leq l\}= \emptyset
\end{eqnarray}
In non-separable spaces, one to one codings are obviously impossible, and (1), (2) are no longer true. Fortunately, there is a similar feature to  (1), (2)  called the tree-like interference of a pair of special sequences (See \cite [Lemma 2.9]{ALT}):  Let $\Phi= (\phi_{1}, \ldots , \phi_{n_{2j+1}})$ and $\Psi= (\psi_{1}, \ldots , \psi_{n_{2j+1}})$ be two $2j+1$-special sequences, then there exist  two numbers $0\leq \kappa_{\Phi, \Psi} \leq \lambda_{\Phi, \Psi} \leq n_{2j+1}$  such that the following conditions hold:
\begin{itemize}
\item[TP. 1] For all $i\leq \lambda_{\Phi, \Psi}$,  $w(\phi_{i})= w(\psi_{i})$ and $p_{i}^{\Phi}= p_{i}^{\Psi}$.
\item[TP. 2] For all $i< \kappa_{\Phi, \Psi}$, $\phi_{i}=\psi_{i}$.
\item[TP. 3] For all $\kappa_{\Phi, \Psi}< i < \lambda_{\Phi, \Psi}$
\[ \supp \phi_{i} \cap \overline{ \supp \psi_{1} \cup \cdots \cup \supp \psi_{\lambda_{\Phi, \Psi} -1}}^{p_{\lambda_{\Phi, \Psi}}-1}= \emptyset \]\[
\text{and} \, \,  \supp \psi_{i} \cap \overline{ \supp \phi_{1} \cup \cdots \cup \supp \phi_{\lambda_{\Phi, \Psi}-1} }^{p_{\lambda_{\Phi, \Psi}}-1}= \emptyset
\]
\item[TP. 4] $\{ w(\phi_{i}) \, : \, \lambda_{\Phi, \Psi}< i \leq n_{2j+1} \} \cap \{ w(\psi_{i}) \, : \, i\leq n_{2j+1}\}=\emptyset$ and $\{ w(\psi_{i}) \, : \, \lambda_{\Phi, \Psi}< i \leq n_{2j+1}\} \cap \{ w(\phi_{i}) \, : \,  i\leq n_{2j+1}\}= \emptyset$.

\end{itemize}

 \subsection{Rapidly increasing sequences ($R.I.S$)}
 For the proof of Step I we shall construct a family of block sequences on $\mathfrak{X}_{\omega_{1}}(\mathbb{C})$ commonly called \emph{rapidly increasing sequences} ($R.I.S$). These sequences are very useful because one has good estimates of upper bounds on $|f(x)|$ for $f\in \mc K_{\omega_{1}}(\mb C)$ and $x$ averages of $R.I.S$.  
  
 For the construction of the family $\mf F$ the only difference from the general theory in \cite{ALT} is that our interest now is to study bounded $\mb R$-linear operators on the complex space $\mathfrak{X}_{\omega_{1}}(\mathbb{C})$. Hence, all the construction of $R.I.S$ in a particular block sequence $(x_n)_{n\in \mb N}$   must be on its \emph{real} linear span.  We point out here that there are no problems with this, because all the combinations of the vectors  $(x_{n})_{n\in \mb N}$ to obtain $R.I.S$ use rational scalars. 

\begin{defi}[$R.I.S$] We say that a block sequence  $(x_k)_k$ of $\mathfrak{X}_{\omega_1}(\mathbb{C})$ is a  $(C,\epsilon)-R.I.S$, $C, \epsilon>0$, when there exists a strictly increasing sequence of natural numbers $(j_k)_k$ such that:
\begin{itemize}
\item[(i)]$\|x_k\|\leq C$;
\item[(ii)]$| \mbox{supp} \; x_k|\leq m_{j_{k+1}}\epsilon$;
\item[(iii)] For all the functionals  $\phi$ of $\mathcal{K}_{\omega_1}(\mathbb{C})$ of type $I$,  with $\omega(\phi)<m_{j_k}$, $|\phi(x_k)|\leq \displaystyle \frac{C}{\omega(\phi)}$.
\end{itemize}
\end{defi}

The following remark is immediately consequence of  this definition.

\begin{obs}
Let  $\epsilon'< \epsilon$. Every $(C, \epsilon)$-R.I.S  has a subsequence which is a $(C, \epsilon')$-R.I.S.
 And for every strictly increasing sequence of ordinals $(\alpha_n)_n$ and every $\epsilon>0$, $(e_{\alpha_n})_n$ is a $(1,\epsilon)$-R.I.S.
\end{obs}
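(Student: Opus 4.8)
The plan is to verify both assertions directly from the definition of a $(C,\epsilon)$-R.I.S.; neither needs any of the deeper machinery, only the fast growth of the sequence $(m_j)$ and the elementary fact that every functional in $\mathcal{K}_{\omega_1}(\mathbb{C})$ has all coordinates bounded by $1$ in modulus. I would record this last fact first: by the minimality of $\mathcal{K}_{\omega_1}(\mathbb{C})$ one checks by induction on the construction that $|\phi(\alpha)|\leq 1$ for every $\phi\in\mathcal{K}_{\omega_1}(\mathbb{C})$ and every $\alpha<\omega_1$. Indeed it holds for the $e^{*}_{\alpha}$, it is preserved under multiplication by $\theta$ with $|\theta|\leq 1$, under restriction to intervals and under rational convex combinations, and it is preserved under the $(m^{-1}_{j},n_{j})$-operations, since there the constituent functionals have successive supports, so each coordinate receives a contribution from at most one summand, which is then divided by $m_j\geq 2$. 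In particular $|\phi(e_\alpha)|\leq 1$, and taking $\phi=e^{*}_{\alpha}$ gives $\|e_\alpha\|=1$.

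For the first assertion, let $(x_k)_k$ be a $(C,\epsilon)$-R.I.S. with witnessing sequence $(j_k)_k$, and fix $\epsilon'<\epsilon$. I would build a subsequence $(x_{k_l})_l$ recursively, the only point being to repair condition (ii): for a fixed $k$ the number $|\supp x_k|$ is a constant, and replacing $\epsilon$ by the smaller $\epsilon'$ weakens the available bound $|\supp x_k|\leq m_{j_{k+1}}\epsilon$. Since $(j_k)_k$ is a strictly increasing sequence of naturals and $m_j\to\infty$, we have $m_{j_k}\to\infty$, so given $k_l$ I can choose $k_{l+1}>k_l$ large enough that $m_{j_{k_{l+1}}}\epsilon'\geq m_{j_{k_l+1}}\epsilon\geq |\supp x_{k_l}|$. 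Setting $j'_l=j_{k_l}$ produces a strictly increasing sequence, and the three clauses for $(x_{k_l})_l$ with parameters $(C,\epsilon')$ now hold: clause (i) is inherited, clause (ii) is exactly what was arranged, and clause (iii) for weights $\omega(\phi)<m_{j'_l}=m_{j_{k_l}}$ is literally the original clause (iii) for $x_{k_l}$. Hence $(x_{k_l})_l$ is a $(C,\epsilon')$-R.I.S.

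For the second assertion, given an increasing sequence $(\alpha_n)_n$ and $\epsilon>0$, I would choose any strictly increasing $(j_n)_n$ with $m_{j_1}\geq 1/\epsilon$ and verify the three clauses with $C=1$. Clause (i) is $\|e_{\alpha_n}\|=1$, noted above. Clause (ii) reads $|\supp e_{\alpha_n}|=1\leq m_{j_{n+1}}\epsilon$, which holds because $m_{j_{n+1}}\geq m_{j_1}\geq 1/\epsilon$. For clause (iii), let $\phi$ be a functional of type $I$, written $\phi=\frac{1}{\omega(\phi)}\sum_i \phi_i$ with successive $\phi_i$; since $e_{\alpha_n}$ is supported on the single ordinal $\alpha_n$, at most one summand $\phi_{i_0}$ meets it, whence $|\phi(e_{\alpha_n})|=\frac{1}{\omega(\phi)}|\phi_{i_0}(e_{\alpha_n})|\leq \frac{1}{\omega(\phi)}$ by the coordinate bound. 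This is the required estimate with $C=1$, valid for every functional of type $I$, so in particular for those with $\omega(\phi)<m_{j_n}$.

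The computations are entirely routine; the only places that demand a little care are the inductive coordinate bound $|\phi(\alpha)|\leq 1$, which underlies both the normalization $\|e_\alpha\|=1$ and the estimate in clause (iii), and the index bookkeeping in the first part, where one must remember that clause (ii) is governed by the successor index $j_{k_{l+1}}$ rather than by $j_{k_l}$, so that passing to a sufficiently sparse subsequence genuinely recovers the lost factor $\epsilon/\epsilon'$.
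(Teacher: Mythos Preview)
Your proof is correct and matches the paper's approach: the paper records this remark as an ``immediate consequence of this definition'' and gives no proof, so you have simply written out the routine verification that the paper omits. Both parts are handled exactly as intended---a sparse subsequence to absorb the factor $\epsilon/\epsilon'$ in clause~(ii), and the single-coordinate support of $e_{\alpha_n}$ together with the coordinate bound $|\phi(\alpha)|\le 1$ for the second assertion.
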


\begin{obs}\label{mr} Let $(x_{n})_{n}$ and $(y_{n})_{n}$ be two $(C,\epsilon)$- R.I.S such that $\sup_{n} \max $ $\supp  x_{n} = \sup_{n} \max \supp y_{n} $. Then there exists $(z_{n})_{n}$ a $(C,\epsilon)$- R.I.S. such that $z_{2n-1}\in \{ x_{k}\}_{k\in \mb N}$  and $z_{2n}\in \{y_{k}\}_{k\in \mb N}$.  
\end{obs}

\begin{proof} Suppose that $(t_{k})_{k}$ and $(s_{k})_{k}$ are  increasing sequences of  positive integers satisfying the definition of R.I.S for $(x_{k})_{k}$ and $(y_{k})_{k}$ respectively.  We construct $(z_{k})_{k}$  as follows. Let $z_{1}= x_{1}$ and $j_{1}= t_{1}$. Pick $s_{k_{1}}$ such that 
$ x_{1}< y_{s_{k_{1}}} $ and $t_{2}<s_{k_{1}}$. Then we define $j_{2}= s_{k_{1}}$ and $z_{2}= y_{s_{k_{1}}}$.  Notice that 

\begin{itemize}
\item[(i)]$\|z_1\|\leq C$;
\item[(ii)]$| \mbox{supp} \; z_1|\leq m_{t_{2}}\epsilon  \leq m_{s_{k_{1}}}\epsilon=m_{j_{2}}\epsilon$;
\item[(iii)] For all the functionals  $\phi$ of $\mathcal{K}_{\omega_1}(\mathbb{C})$ of type $I$,  with $\omega(\phi)<m_{j_1}$, $|\phi(z_1)|\leq \displaystyle \frac{C}{\omega(\phi)}$.
\end{itemize}
Continuing with this process we obtain the desired sequence. 

\end{proof}

\begin{teo}\label{w3} Let $(x_k)_k$ be a normalized block sequence of $\mathfrak{X}_{\omega_1}$  and  $\epsilon >0$. Then there exists a normalized block subsequence $(y_n)_n$ in $\spa_{\mathbb R} \{ x_k\}$  which is a $(3,\epsilon)-R.I.S$.
\end{teo}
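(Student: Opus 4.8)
The plan is to follow the classical route for mixed Tsirelson spaces and to build the required R.I.S out of normalized $\ell_1^{n_j}$-averages with rapidly increasing indices $j$, taking care that every vector produced lies in the \emph{real} span of $(x_k)$. Recall that a normalized vector $x$ is a $2$-$\ell_1^{n_j}$-average if $x = (\sum_{i=1}^{n_j} y_i)/\|\sum_{i=1}^{n_j} y_i\|$ for some normalized blocks $y_1 < \cdots < y_{n_j}$ with $\|\sum_{i=1}^{n_j} y_i\| \geq n_j/2$.

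First I would establish the basic building block: for every $j \in \mathbb{N}$ and every block subsequence lying in $\spa_{\mathbb{R}}\{x_k\}$ there is a normalized $2$-$\ell_1^{n_j}$-average in that real span. The existence is proved by contradiction in the standard inductive fashion: if no such average existed, one shows that iterated averages have norm tending to $0$, contradicting the lower $\ell_1$-estimate furnished by the $(m_{2j}^{-1}, n_{2j})$-operations, namely $\|\tfrac{1}{n_{2j}}\sum_{i=1}^{n_{2j}} z_i\| \geq 1/m_{2j}$ for normalized blocks $z_1 < \cdots < z_{n_{2j}}$, which is sustainable only because $n_j/m_j \to \infty$ by the fast growth of the sequences. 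The crucial point for our setting is that, since $\mathcal{K}_{\omega_1}(\mathbb{C})$ is closed under multiplication by rational complex scalars of modulus $\leq 1$ (Property~1) and under the $(m_{2j}^{-1}, n_{2j})$-operation, the norming functionals in the construction may be chosen with real rational coefficients; hence the whole argument stays inside $\spa_{\mathbb{R}}\{x_k\}$ exactly as in \cite{ALT}.

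Next I would record the decisive estimate on how such averages are charged by functionals of small weight: if $x$ is a $2$-$\ell_1^{n_j}$-average and $\phi \in \mathcal{K}_{\omega_1}(\mathbb{C})$ is of type~I with $\omega(\phi) < m_j$, then $|\phi(x)| \leq C/\omega(\phi)$ for an absolute constant $C \leq 3$. The reason is the usual one: a weight-$\omega(\phi)$ functional can charge only a number of constituent blocks $y_\ell$ bounded in terms of $\omega(\phi)$, while the average spreads its mass over the far larger number $n_j$ of blocks, so the remaining blocks contribute a term of order $m_j/n_j$, negligible once $n_j$ is large relative to $m_j$. This is the complex analogue of the ``basic inequality'' and follows by the same tree-analysis of $\phi(x)$ as in \cite{ALT}, the complex scalars being handled through the closedness of $\mathcal{K}_{\omega_1}(\mathbb{C})$ under rational rotations.

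With these two ingredients I would assemble the R.I.S. Fix $\epsilon > 0$ and choose $j_1 < j_2 < \cdots$ recursively: at stage $k$ pick $y_k \in \spa_{\mathbb{R}}\{x_\ell\}$ a normalized $2$-$\ell_1^{n_{j_k}}$-average with $y_{k-1} < y_k$; then, its support being a fixed finite set, choose $j_{k+1}$ so large that $|\supp y_k| \leq m_{j_{k+1}}\epsilon$. Now $\|y_k\| = 1 \leq 3$ gives (i), while (ii) holds by the choice of $j_{k+1}$, and for (iii) any type~I functional $\phi$ with $\omega(\phi) < m_{j_k}$ falls under the estimate of the previous paragraph, yielding $|\phi(y_k)| \leq 3/\omega(\phi)$. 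Hence $(y_k)_k$ is a normalized $(3,\epsilon)$-R.I.S in $\spa_{\mathbb{R}}\{x_k\}$, as desired. The main obstacle is the existence statement of the first step: one must check that the inductive ``no-$\ell_1^{n_j}$-average'' contradiction survives the passage to the real span. Everything hinges on the fact that the $(m_{2j}^{-1}, n_{2j})$-operation together with closedness under rational complex scalars lets us realize the required lower $\ell_1$-estimates by real rational combinations; granting this, the analytic estimates of \cite{ALT} transfer essentially verbatim.
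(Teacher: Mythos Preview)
Your proposal is correct and follows essentially the same route as the paper: build $2$-$\ell_1^{n_j}$-averages in the real span via the standard contradiction argument (this is the paper's Theorem~\ref{w1}), invoke the small-weight estimate $|\phi(y)|\leq 3/\omega(\phi)$ for such averages, and then assemble the R.I.S.\ by recursively choosing $j_{k+1}$ large enough that $|\supp y_k|\leq \epsilon\, m_{j_{k+1}}$. One small clarification on the point you flag as the main obstacle: the construction stays in $\spa_{\mathbb R}\{x_k\}$ simply because iterated sums and normalization are real-scalar operations on the $x_k$---the lower $\ell_1$-estimate $\|\sum z_i\|\geq n_{2j}/m_{2j}$ holds regardless of whether the witnessing functionals have real coefficients, so no special care about ``real rational functionals'' is needed.
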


For the proof of Theorem \ref{w3} we first construct a  simpler type of sequence.
\begin{defi} Let $X$ be a Banach space,  $C\geq 1$ and  $k \in \mathbb{N}$. A normalized vector  $y$  is called a $C-\ell^{k}_{1}$-average  of  $X$, when  there exist a block sequence $(x_1,...,x_k)$  such that
\begin{itemize}
\item[(1)] $y=(x_1+\ldots+x_k)/k$;
\item[(2)] $\|x_i\|\leq C$,  for all  $i=1,...,k$
\end{itemize}
\end{defi}

In the next result we want  to emphasize  that this special type of sequence are really constructed on the real structure of the space $\mf X_{\omega_{1}}(\mb C)$. 
\begin{teo}  \label{w1}For every normalized block sequence $(x_n)$ of  $\mathfrak{X}_{\omega_1}(\mathbb{C})$, and every integer $k$,  there exist $z_1<\ldots<z_k$ in  $\spa_{\mathbb{R}}(x_n)$, such that $\ (z_1+\ldots+z_k)/k$  is a  $2-\ell^{k}_{1}$-average.
\end{teo}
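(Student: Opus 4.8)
The plan is to reduce the statement to a purely quantitative fact about norms of sums of normalized blocks, and then to derive a contradiction by opposing the universal lower estimate coming from the $(m^{-1}_{2j},n_{2j})$-operations against an upper estimate that is forced by the hypothetical \emph{absence} of $\ell^k_1$-averages. The whole point is that a $2$-$\ell^k_1$-average is nothing but a witness that $k$ consecutive normalized blocks can be made to behave like the unit vectors of $\ell_1^k$.

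\emph{Reduction.} First I would observe that it suffices to produce \emph{normalized} real blocks $w_1<\cdots<w_k$ in $\spa_{\mb R}(x_n)$ with $\|w_1+\cdots+w_k\|\geq k/2$. Indeed, setting $\lambda=k/\|w_1+\cdots+w_k\|$ we get $\lambda\leq 2$, so the vectors $z_i=\lambda w_i$ satisfy $\|z_i\|=\lambda\leq 2$, while $y=(z_1+\cdots+z_k)/k=(w_1+\cdots+w_k)/\|w_1+\cdots+w_k\|$ is normalized; hence $y$ is a $2$-$\ell^k_1$-average. (If one wants to stay with rational combinations, as emphasized just before the statement, one replaces $\lambda$ by a rational approximation, losing an arbitrarily small amount in the constant.) So the entire task is to find such $w_i$ in the real span.

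\emph{Lower estimate.} The basic mechanism is that one even operation already produces $\ell_1$-behaviour of a fixed length: given \emph{any} normalized blocks $w_1<\cdots<w_{n_{2j}}$, choose for each $i$ a functional $\phi_i\in\mc K_{\omega_1}(\mb C)$ with $\phi_i(w_i)$ real and $\phi_i(w_i)\approx\|w_i\|=1$; since $\mc K_{\omega_1}(\mb C)$ is closed under restriction to intervals we may take $\supp\phi_i\subseteq\ran w_i$, so that $\phi_1<\cdots<\phi_{n_{2j}}$ and the cross terms $\phi_i(w_{i'})$ vanish for $i\neq i'$. By Property~2 the functional $\phi=\frac{1}{m_{2j}}\sum_{i=1}^{n_{2j}}\phi_i$ lies in $\mc K_{\omega_1}(\mb C)$, and evaluating it gives $\|\sum_{i=1}^{n_{2j}}w_i\|\geq\phi(\sum_i w_i)\approx n_{2j}/m_{2j}$. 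Thus the normalized average of $n_{2j}$ successive blocks always has norm at least $\approx 1/m_{2j}$.

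\emph{Contradiction and iteration.} Suppose, for contradiction, that $\spa_{\mb R}(x_n)$ contains no $2$-$\ell^k_1$-average; by the Reduction this says that every $k$-tuple of consecutive normalized real blocks has sum of norm $<k/2$. I would then iterate this hypothesis along a tree of averages: grouping $n_{2j}$ successive normalized blocks into groups of $k$ and normalizing each group, the hypothesis forces each normalization constant to be $<k/2$, and one propagates this through $l\approx\log_k n_{2j}$ levels to bound the normalized average of the $n_{2j}$ blocks by a quantity that decays geometrically in $l$. Since $(n_j)$ grows far faster than $(m_j)$ (so that $n_{2j}>m_{2j}^{\log_2 k}$ for fixed $k$ and $j$ large), this geometric upper bound eventually drops below $1/m_{2j}$, contradicting the lower estimate of the previous paragraph; this contradiction proves the existence of the desired $w_i$.

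\emph{Main obstacle.} The delicate step is exactly this propagation, i.e.\ converting the length-$k$ upper bound into a geometrically decaying bound for the length-$n_{2j}$ average. Because the basis of $\mf X_{\omega_1}(\mb C)$ is only bimonotone and not unconditional, one cannot naively ``pull out'' the varying normalization constants from a sum of normalized group-vectors. This is handled, exactly as in \cite{ALT}, by analysing an arbitrary norming functional of the long sum through the tree decomposition supplied by the inductive definition of $\mc K_{\omega_1}(\mb C)$: the contribution coming from the even $(m^{-1}_{2j},n_{2j})$-operations is governed by the no-average hypothesis, while the contribution of the special (odd) functionals is shown to be a lower-order term, absorbed by the growth conditions on $(m_j)$ and $(n_j)$. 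This functional-tree estimate is the technical heart of the argument and the only place where the hypothesis is genuinely used; the remaining steps are bookkeeping with the bimonotone basis.
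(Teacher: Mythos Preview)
Your overall strategy---reduce to finding $k$ normalized blocks whose sum has norm at least $k/2$, get a lower bound on long sums from an $(m_{2j}^{-1},n_{2j})$-operation, and get an upper bound by iterating the no-average hypothesis along a $k$-ary tree---is exactly the paper's argument (and the standard Gowers--Maurey one). The reduction and the lower estimate are fine.

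The ``Main obstacle'' paragraph, however, misidentifies a non-existent difficulty and proposes an unnecessary and misguided fix. The propagation step is completely elementary and requires no analysis of norming functionals, no tree decomposition of elements of $\mc K_{\omega_1}(\mb C)$, and in particular the special (odd) functionals play \emph{no role whatsoever} here. The point is that you should not normalize the groups first and then try to ``pull out'' the constants (which, as you note, bimonotonicity alone does not allow). Instead, argue directly in the contrapositive. With $N=k^n$ and $x(i,j)=\sum_{t=(j-1)k^{i}+1}^{jk^{i}}x_t$, prove by induction on $i$ that $\|x(i,j)\|\leq (k/2)^{i}$. For the inductive step, suppose $\|x(i,j)\|>(k/2)^{i}$ and set
\[
z_l \;=\; \frac{k}{\|x(i,j)\|}\,x(i-1,(j-1)k+l),\qquad 1\leq l\leq k.
\]
Then $(z_1+\cdots+z_k)/k=x(i,j)/\|x(i,j)\|$ is normalized, and by the inductive hypothesis $\|z_l\|\leq k\,(k/2)^{i-1}/(k/2)^{i}=2$, so $(z_1+\cdots+z_k)/k$ is a $2$-$\ell_1^k$-average in $\spa_{\mb R}(x_n)$, contradicting the assumption. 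This is the entire content of the paper's line ``It is proved by induction on $i$ that $\|x(i,j)\|\leq 2^{-i}k^{i}$''; no unconditionality and no functional-tree estimate is used or needed. The growth condition you state (equivalent to choosing $j$ and $n$ with $2^n>m_{2j}$ and $k^n<n_{2j}$) then finishes the contradiction exactly as you describe.
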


\begin{proof} 
The proof is standard. Suppose that the result is false. Let   $j$ and  $n$ be natural numbers with 
\begin{eqnarray*}
2^{n}>m_{2j}\\
n_{2j}>k^n.
\end{eqnarray*}

Let $N=k^n$ and  $x=\displaystyle \sum^{N}_{i=1}x_i$.  For each $1\leq i\leq n$ and every $1\leq j \leq k^{n-i}$, we define,
\begin{eqnarray*}
 x(i,j)=\displaystyle \sum^{jk^i}_{t=(j-1)k^i+1}x_t.
 \end{eqnarray*}
 
Hence, $x(0,j)=x_j$ and $x(n,1)=x$. \\
It is proved by induction on $i$ that $\|x(i,j)\|\leq 2^{-i}k^i$,  for all $i, j$.  
In  particular, $\|x\|=\|x(n,1)\|\leq 2^{-n}k^n= 2^{-n}N$.  Then by \emph{ Property 1.} of definition in the norming set
\begin{eqnarray*}
   \|x\|\geq \displaystyle \frac{1}{m_{2j}}\sum^{n_{2j}}_{t=1}\|x_t\|=\frac{n_{2j}}{m_{2j}}>\frac{N}{m_{2j}}. 
\end{eqnarray*}
Hence, 
\begin{eqnarray*}
  2^{-n}N&>&\frac{N}{m_{2j}}\\
  m_{2j}&>&2^n,
\end{eqnarray*}
which is is a contradiction.
\end{proof}

Finally,  for the construction of $R.I.S$  we observe these simple facts (\cite[Remark 4.10]{ALT})
\begin{itemize}
\item  If $y$ is a  $C-\ell^{n_j}_{1}$-average of  $\mathfrak{X}_{\omega_1}(\mathbb{C})$ and  $\phi \in \mathcal{K}_{\omega_1}(\mathbb{C})$  has weight  $\omega(\phi)<m_j$, then  $|\phi(y)|\leq \frac{3C}{2\omega(\phi)}$;
\item Let  $(x_k)_k$ be a block sequence of  $\mathfrak{X}_{\omega_1}(\mathbb{C})$ such that there exists a strictly increasing sequence of positive integers $(j_k)_k$ and $\epsilon>0$ satisfying:
\begin{itemize}
\item[a)] Each $x_k$ is a  $2-\ell^{n_{j_k}}_{1}$-average;
\item[b)] $|supp \; x_k|< \epsilon m_{j_{k+1}}$.

\end{itemize}
Then $(x_k)_k$ is a  $(3,\epsilon)-R.I.S$.
\end{itemize}

\subsection{Basic Inequality}

To prove Step II and III we need a  crucial result called  \emph{the basic inequality}  which is very important  to find  good estimations for  the norm of certain combinations of $R.I.S$  in  $\mf X_{\omega_{1}}(\mb C)$. First we need to introduce  the \emph{mixed Tsirelson spaces}.

The mixed Tsirelson space  $T [(m^{-1}_{j}, n_j)_j]$ is defined by  considering the completion of  $c_{00}(\omega, \mathbb{C})$ under  the norm  $\|. \|_{0}$  given by the following implicit formula
\begin{eqnarray*}
\|x\|_{0}=\max \left \{ \|x\|_{\infty}, \, \, \displaystyle \sup_{j} \sup \frac{1}{m_j}\sum^{n_j}_{i=1}\|E_ix\|_{0}\right \},
\end{eqnarray*}
The supremum  inside  the formula is taken over all the sequences  $E_1<\ldots<E_{n_j}$ of subsets of  $\omega$.   Notice that in this space the canonical Hamel basis $(e_{n})_{n<\omega}$ of $c_{00}(\omega, \mathbb{C})$ is  1-subsymmetric and 1-unconditional basis.

\

We can give an alternative definition for the norm of  $ T[(m^{-1}_{j}, n_j)_j]$  by defining the following norming set. Let  $W[(m_j^{-1}, n_j)] \subseteq c_{00}(\omega, \mathbb{C})$ the minimal set of $c_{00}(\omega, \mathbb{C})$  satisfying the following properties:
\begin{enumerate}
\item For every $\alpha<\omega$, $e^{*}_{\alpha}\in W[(m_j^{-1}, n_j)]$. If  $\phi \in W[(m_j^{-1}, n_j)] $ and $\theta=\lambda+ i\mu$ is a complex number with $\lambda$ and $\mu$ rationals and $|\theta|\leq 1$, $\theta \phi \in W[(m_j^{-1}, n_j)] $;
\item For every $\phi\in W[(m_j^{-1}, n_j)] $ and $E\subseteq \omega$, $E\phi \in W[(m_j^{-1}, n_j)] $;
\item For every $j\in \mathbb{N}$ and $\phi_1<\ldots<\phi_{n_j}$ in $W[(m_j^{-1}, n_j)] $, $(1/m_{j})\sum^{n_j}_{i=1}\phi_i\in W[(m_j^{-1}, n_j)] $;
\item $W[(m_j^{-1}, n_j)]$ is closed under convex rationals combinations. 
\end{enumerate}

 \begin{teo}[Basic Inequality for  $R.I.S$]  Let  $(x_n)_n$ be a  $(C,\epsilon)-R.I.S$ of  $\mathfrak{X}_{\omega_1}(\mathbb{C})$  and $(b_k)_k\in c_{00}(\mathbb{C},\mathbb{N})$.  Suppose that for some  $j_0 \in \mathbb{N}$ we have that for every $f \in \mathcal{K}_{\omega_1}(\mathbb{C})$  with weight  $w(f)=m_{j_0}$ and for every interval $E$ of $\omega_{1}$, 
 \begin{eqnarray*}
 \left|f\left(\sum_{k \in E}b_kx_k \right)\right|\leq C\left(\displaystyle \max_{k\in E}|b_k|+\epsilon\sum_{k \in E}|b_k |\right).
 \end{eqnarray*}
Then for every  $f\in \mathcal{K}_{\omega_1}(\mathbb{C})$ of type $I$,  there exist $g_1, g_2 \in c_{00}(\mathbb{C},\mathbb{N})$  such that
 \begin{eqnarray*}
 \left|f\left(\sum_{k \in E}b_kx_k \right)\right|\leq C(g_1+g_2)\left(\displaystyle \sum_{k \in E}|b_k |e_k\right),
 \end{eqnarray*} 
 where $g_1=h_1$ ou $g_1=e^{*}_{t}+h_1$, $t \notin supp\, h_1$, e $h_1 \in W[(m_j^{-1}, 4n_j)] $  such that $h_1 \in conv_{\mathbb{Q}}\left\{h\in W[(m_j^{-1}, 4n_j)]: w(f)=w(f)\right\}$ and $m_j$ does not appear as a weight  of a node in the tree analysis of $h_{1}$, and  $\|g_2\|_{\infty}\leq \epsilon$.
  \end{teo}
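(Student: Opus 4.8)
The plan is to argue by induction on the height of a tree analysis $(f_s)_{s\in\mc T}$ of the functional $f$, where $\mc T$ is a finite tree with root $\emptyset$, $f_\emptyset=f$, each non-leaf node satisfies $f_s=\frac{1}{m_{j_s}}\sum_t f_t$ over its immediate successors $t$ (the node being either the result of an $(m_{2j}^{-1},n_{2j})$-operation or a special functional), and each leaf is of the form $\theta e^*_\alpha$ with $|\theta|\le1$. The inductive claim I would carry is the conclusion itself, made uniform over all intervals $E$: for each node functional $f_s$ there are $g_1,g_2$, with $g_2\in c_{00}(\mb C,\mb N)$ and $\|g_2\|_\infty\le\epsilon$ and $g_1$ of the stated shape in $W[(m_j^{-1},4n_j)]$, such that $|f_s(\sum_{k\in E}b_kx_k)|\le C(g_1+g_2)(\sum_{k\in E}|b_k|e_k)$. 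The output of the root node is then the desired estimate.

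First the base case and the bookkeeping of indices. For a leaf $f_s=\theta e^*_\alpha$ the block structure forces $f_s(x_k)\ne0$ for at most one $k$, and $|x_k(\alpha)|\le\|x_k\|\le C$ by R.I.S. property (i), so $g_1=e^*_k$ and $g_2=0$ work. At an interior node I would split the indices $k\in E$ by comparing $w(f_s)$ with $m_{j_k}$: whenever the relevant functional is of type $I$ with weight $<m_{j_k}$, R.I.S. property (iii) gives $|f_s(x_k)|\le C/w(f_s)$, and these negligible terms are precisely what is collected into the error functional $g_2$; the bound $\|g_2\|_\infty\le\epsilon$ then follows from the rapid growth of $(m_j),(n_j)$ together with the support control of R.I.S. property (ii). For the remaining indices the range of $x_k$ genuinely meets the ranges of the immediate successors, and there one recurses.

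Next, the type-$I$ nodes. If $f_s=\frac{1}{m_{2j}}\sum_{i=1}^{n_{2j}}f_i$ with $f_1<\cdots<f_{n_{2j}}$, I would apply the inductive hypothesis to each $f_i$ on the interval of indices $k$ with $\mathrm{ran}\,x_k\subseteq\mathrm{ran}\,f_i$, and treat separately the at most two boundary indices per successor whose range straddles the gap between consecutive $f_i$. Assembling the resulting functionals through the $(m_{2j}^{-1},4n_j)$-operation of $W$ produces the new $g_1$ at weight $m_{2j}$; the passage from $n_j$ to $4n_j$ is exactly what absorbs these boundedly many boundary terms and leaves room for the modulus bookkeeping in the complex setting, where $W[(m_j^{-1},4n_j)]$ is closed under multiplication by rational complex scalars of modulus at most one, so that only $|b_k|$ and $|\theta|$ ever enter the estimates.

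The main obstacle, as always in results of this kind, is the special (odd weight) functionals, and this is where the distinguished weight $m_{j_0}$ and the hypothesis enter. When a node has weight exactly $m_{j_0}$ I would \emph{not} recurse but apply the hypothesis directly: the bound $C(\max_{k\in E}|b_k|+\epsilon\sum_{k\in E}|b_k|)$ contributes a single node $e^*_t$ (the index attaining the maximum), which is the source of the optional summand in $g_1=e^*_t+h_1$, while the $\epsilon$-part is pushed into $g_2$; this is also why the weight $m_{j_0}$ never reappears as a node weight of $h_1$. For a genuine special functional $f_s=\frac{1}{m_{2j+1}}\sum_i\phi_i$ I would invoke the tree-like interference of special sequences (TP.1--TP.4) together with the injectivity built into the $\sigma_\varrho$-coding: the weights $w(\phi_i)$ increase rapidly and essentially do not repeat, so outside a bounded initial segment the $\phi_i$ carry weights too large to match any fixed weight and incompatible with the $m_{j_k}$, hence contribute only to the error term; the special node can therefore be estimated as if it had $O(1)$ effective summands and folded into $g_2$ after scaling by $1/m_{2j+1}$, rather than recorded in $g_1$. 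The delicate point, which I expect to consume most of the work, is to make this counting uniform in $E$ and compatible with the recursion so that the accumulated error still satisfies $\|g_2\|_\infty\le\epsilon$; the coding inequality $\sigma(\cdots)>\max\{p_d^2,1/\epsilon^2,\max\supp\phi_d\}$ is exactly what guarantees the separation of weights needed for this bound.
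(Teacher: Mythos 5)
The paper gives no proof of this theorem at all: its ``proof'' is the single line ``See [ALT, Section 8.2]'', so the only meaningful comparison is with the argument of Argyros--Lopez-Abad--Todorcevic that is being imported. Your overall strategy --- induction on the tree analysis of $f$; for each node $f_s$ splitting the indices $k$ according to whether $w(f_s)<m_{j_k}$, pushing the resulting small contributions into $g_2$ via R.I.S.\ properties (ii) and (iii); recursing on the remaining indices; using $4n_j$ in the auxiliary space to absorb the boundary indices; and breaking the recursion at nodes of weight $m_{j_0}$ by applying the hypothesis directly (which is indeed the source of the optional $e^*_t$ summand and of the absence of $m_{j_0}$ from the tree analysis of $h_1$) --- is the standard one and matches that reference in outline.

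The genuine gap is your treatment of the special (odd-weight) functionals. You propose to invoke the tree-like interference TP.1--TP.4 and the coding to argue that a special node has only $O(1)$ effective summands and can be folded entirely into $g_2$. This fails: the basic inequality must hold for \emph{every} $f$ in the norming set, including a special functional $\frac{1}{m_{2j+1}}\sum_i\phi_i$ whose components are perfectly adapted to the given R.I.S.\ --- this is exactly the situation for dependent sequences, where $\phi_i(x_i)=1$ for each $i$. Such an $f$ gives $|f(\sum_kb_kx_k)|$ of order $\frac{1}{m_{2j+1}}\sum_i|b_{k_i}|$ over up to $n_{2j+1}$ indices, which cannot be dominated by $C(e^*_t+g_2)(\sum_k|b_k|e_k)$ with $\|g_2\|_\infty\le\epsilon$. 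In the actual proof a special node of weight $m_{2j+1}\ne m_{j_0}$ is treated exactly like an even-weight node: one recurses on its children and records a node of weight $m_{2j+1}$ in the auxiliary functional $h_1$; this is precisely why $W[(m_j^{-1},4n_j)]$ is built with operations for all $j$, odd as well as even, and why $h_1$ is required only to avoid the single weight $m_{j_0}$. The tree-like interference properties enter only later, in the estimates for $(1,j)$- and $(0,j)$-dependent sequences (Propositions \ref{w5} and following), not in the basic inequality itself. A secondary omission: the norming set is rationally convex (property 4 of its definition), so the tree analysis also has convex-combination nodes, which is where the requirement $h_1\in conv_{\mathbb{Q}}\{\dots\}$ in the conclusion comes from; your induction must carry that case as well.
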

\begin{proof} See \cite [Section 8.2]{ALT}
\end{proof}
  
The following results are consequences of the basic inequality. The proof of this properties in our case is the same as in \cite{ALT}.
 \begin{pro} \label{cbi}Let  $f \in \mathcal{K}_{\omega_1}(\mathbb{C})$ or $f\in W[(m_j^{-1}, 4n_j)]$  of type  $I$. Consider $j \in \mathbb{N}$ and  $l\in \left[\displaystyle \frac{n_j}{m_j},n_j\right]$. Then for every set $F\subseteq c_{00}(\omega_{1}, \mb C)$  of cardinality $l$, 
 
$$\left|f\left(\frac{1}{l}\sum_{\alpha \in F}e_\alpha \right)\right|\leq \left\{
\begin{array}{ll}
\displaystyle \frac{2}{w(f)m_j}, \quad \text{if} \, \, w(f)<m_j,  \\ \\
\displaystyle \frac{1}{w(f)} \quad \text{if} \, \, w(f)\geq m_j.
\end{array}
\right.
$$
If the tree analysis of $f$ does not contain nodes of weight $m_j$, then
\[  \left|f\left(\frac{1}{l}\sum_{\alpha \in F}e_\alpha \right)\right|\leq \frac{2}{m_j^3}
\]
\end{pro}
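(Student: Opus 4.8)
The plan is to prove the two displayed estimates together by induction on the tree analysis of the functional, and then to isolate the refined estimate as the special feature that appears precisely when the resonant weight $m_j$ is absent. Throughout I write $f=\frac{1}{w(f)}\sum_{r}f_r$ for the decomposition of a type $I$ functional into its immediate successors in the tree analysis, where the $f_r$ have successive, pairwise disjoint supports and their number is at most $4n_{i}$ when $w(f)=m_{i}$. This recursive structure is available verbatim both for $f\in\mathcal{K}_{\omega_1}(\mathbb{C})$ and for $f\in W[(m_j^{-1},4n_j)]$, so one induction handles both cases. Alternatively, for $f\in\mathcal{K}_{\omega_1}(\mathbb{C})$ one may first apply the Basic Inequality to the $(1,\epsilon)$-$R.I.S$ $(e_\alpha)_{\alpha\in F}$ to replace $f$ by a functional $g_1\in W[(m_j^{-1},4n_j)]$ of the same weight, up to an error $g_2$ with $\|g_2\|_\infty\le\epsilon$ contributing at most $\epsilon$ to the estimate, and then argue entirely inside the mixed Tsirelson norming set.

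The case $w(f)\ge m_j$ needs no information about $l$. Since the immediate successors at every node have pairwise disjoint supports, each basis vector $e_\alpha$ is seen by exactly one successor at each level, so unfolding the tree gives $|f(e_\alpha)|\le 1/w(f)$ for every $\alpha$, and therefore
\[
\left| f\left( \frac{1}{l}\sum_{\alpha\in F}e_\alpha \right) \right| \le \frac{1}{l}\sum_{\alpha\in F}|f(e_\alpha)| \le \frac{1}{w(f)},
\]
which is the second branch of the first estimate.

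For $w(f)<m_j$ the whole point is to exploit the lower bound $l\ge n_j/m_j$. For each $\alpha\in F\cap\supp f$ let $P_\alpha$ be the product of the weights of the nodes on the branch from the root to the leaf $e^{*}_{\alpha}$, so that $|f(e_\alpha)|\le 1/P_\alpha$ and it suffices to bound $\frac{1}{l}\sum_\alpha 1/P_\alpha$ by $2/(w(f)m_j)$. I would split $F$ according to whether the branch of $\alpha$ meets a node of weight $\ge m_j$. For the points whose branch meets such a node one has $P_\alpha\ge w(f)\,m_j$, the root weight times that of a strictly lower node, so these contribute at most $\frac{1}{l}\cdot\frac{l}{w(f)m_j}=\frac{1}{w(f)m_j}$. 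For the remaining points every weight on the branch is $\le m_{j-1}$; here I would use that a type $I$ functional all of whose weights are $<m_j$ cannot efficiently distribute an average of length $l\ge n_j/m_j$, because the admissible branching at a weight-$m_k$ node with $k\le j-1$ is only $4n_k\le 4n_{j-1}$, while $n_j$ is super-exponentially larger than $m_j\,n_{j-1}$; quantifying this by a secondary induction on the low subtree, comparing $|F\cap\supp f|$ with $l$, again yields a contribution at most $\frac{1}{w(f)m_j}$. Adding the two parts gives $\frac{2}{w(f)m_j}$.

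For the refinement, suppose no node of the tree has weight exactly $m_j$. Then the first weight met on any branch that reaches or exceeds $m_j$ is in fact $\ge m_{j+1}=m_j^{4}$, since consecutive terms satisfy $m_{j+1}=m_j^{4}$. Re-running the split above with $m_j$ replaced by $m_{j+1}$ on the high part gives there a contribution $\le 1/(w(f)m_j^{4})\le 1/m_j^{3}$, while the low part, whose weights are all $\le m_{j-1}$ and hence still far from resonance with a length $l\approx n_j$, is controlled by the same scale-gap argument; combining yields $\frac{1}{l}\big|f(\sum_{\alpha\in F}e_\alpha)\big|\le 2/m_j^{3}$. I expect the genuine obstacle to be exactly this low-part estimate together with the extraction of the precise constants: one must convert the qualitative statement ``a functional with weights $\le m_{j-1}$ is off-resonance against a length-$n_j$ average'' into the quantitative bounds $1/(w(f)m_j)$ and $1/m_j^{3}$, and this is where the exact recursions $m_{j+1}=m_j^{4}$ and $n_{j+1}=(4n_j)^{\log_2 m_{j+1}^{3}}$ must be used carefully, since a single weight-$m_{j-1}$ node would already overshoot $2/m_j^{3}$ if the off-resonance gain were not taken into account.
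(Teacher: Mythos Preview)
The paper does not actually prove this proposition: its entire proof is the citation ``\cite[Proposition 4.6]{ALT}''. So there is no in-paper argument to compare against; the relevant comparison is with the Argyros--Lopez-Abad--Todorcevic proof.

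Your outline is the standard one and matches what is done in \cite{ALT} (and earlier in Schlumprecht and Gowers--Maurey): one argues by induction on the tree analysis, the case $w(f)\ge m_j$ is the trivial pointwise bound $|f(e_\alpha)|\le 1/w(f)$, and for $w(f)<m_j$ one separates the contribution of branches that reach a node of large weight from those confined to small weights. Your remark that the refined bound $2/m_j^{3}$ comes from replacing $m_j$ by $m_{j+1}=m_j^{4}$ on the ``high'' part is exactly right, and you have correctly located the only genuine difficulty: the quantitative control of the ``low'' subtree, where every weight is $\le m_{j-1}$, against an average of length $l\ge n_j/m_j$. In \cite{ALT} this is handled not by your informal counting of leaves but by folding it into the induction hypothesis itself---one proves the two-branch estimate simultaneously for all weights by induction on tree height, so that when one expands $f=\tfrac{1}{m_i}\sum_r f_r$ with $i<j$, each $f_r$ already satisfies the appropriate bound on the portion of $F$ it sees, and the growth conditions $m_{j+1}=m_j^{4}$, $n_{j+1}=(4n_j)^{s_j}$ ensure the sum of at most $4n_i\le 4n_{j-1}$ such pieces, divided by $m_i$, stays below $2/(m_i m_j)$ (respectively $2/m_j^{3}$). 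Your sketch is honest about leaving this step unspecified; filling it in is routine once the induction is set up in this joint form, and is carried out in detail in the cited reference.
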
 
 
\begin{proof} \cite[Proposition 4.6]{ALT}
\end{proof}
 
\begin{pro} \label{db}Let  $(x_k)_k$ be a $(C,\epsilon)-R.I.S$ of  $\mathfrak{X}_{\omega_1}(\mathbb{C})$  with  $\epsilon \leq \displaystyle \frac{1}{n_j}$, $l\in \left[\displaystyle \frac{n_j}{m_j},n_j\right]$ and let  $f \in \mathcal{K}_{\omega_1}(\mathbb{C})$  of type $I$. Then, \\
 
$\left|f\left(\frac{1}{l}\sum^{l}_{k=1}x_k\right)\right|\leq \left\{
\begin{array}{ll}
\displaystyle \frac{3C}{w(f)m_j}, \quad \text{if} \, \, w(f)<m_j  \\ \\
\displaystyle \frac{C}{w(f)} + \frac{2C}{n_j}, \quad \text{if} \, \, w(f)\geq m_j.
\end{array}
\right.
$
\\
Consequentely,  if  $(x_k)^{l}_{k=1}$ is a  normalized $(C,\epsilon)-R.I.S$  with  $\epsilon\displaystyle  \leq \frac{1}{n_{2j}}$, $l \in \left[\displaystyle \frac{n_{2j}}{m_{2j}},n_{2j}\right]$,  then
\begin{eqnarray*}
\frac{1}{m_{2j}}\leq \left \Vert  \displaystyle \frac{1}{l}\sum^{l}_{k=1}x_k\right \Vert \leq \frac{2C}{m_{2j}}.
\end{eqnarray*}
\end{pro}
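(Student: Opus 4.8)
The plan is to transfer the estimate from the rapidly increasing sequence $(x_k)_k$ to the unit vector basis of the mixed Tsirelson space $W[(m_j^{-1},4n_j)]$ by means of the Basic Inequality, and then to read off the bound from Proposition \ref{cbi}, which is precisely the corresponding estimate for the flat average $\frac1l\sum_k e_k$ in that space.

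I would apply the Basic Inequality to the finite sequence $(x_k)_{k=1}^{l}$ with coefficients $b_k=1$ and $E=\{1,\dots,l\}$; its standing hypothesis is secured by property (iii) in the definition of a $R.I.S$, which is exactly what governs the action of a functional of small weight on the blocks $x_k$. Dividing by $l$, this produces $g_1,g_2$ with
\[
\left| f\!\left(\tfrac1l\sum_{k=1}^{l}x_k\right)\right|\le C(g_1+g_2)\!\left(\tfrac1l\sum_{k=1}^{l}e_k\right),
\]
where $\|g_2\|_\infty\le\epsilon$ and $g_1=h_1$ or $g_1=e^{*}_t+h_1$, with $h_1\in W[(m_j^{-1},4n_j)]$ of type $I$ and $w(h_1)=w(f)$. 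I then estimate the three contributions separately. The term $g_2$ contributes at most $C\epsilon\le C/n_j$, and the possible $e^{*}_t$ contributes at most $C/l\le Cm_j/n_j$; both are of lower order by the rapid growth of $(m_j)$ and $(n_j)$. For the main piece I apply Proposition \ref{cbi} with the given index $j$ (for which $l\in[n_j/m_j,n_j]$): since $w(h_1)=w(f)$ this gives $|h_1(\frac1l\sum_k e_k)|\le \frac{2}{w(f)m_j}$ when $w(f)<m_j$ and $\le \frac1{w(f)}$ when $w(f)\ge m_j$. Collecting the pieces and absorbing the error terms into the leading one yields exactly the two asserted bounds $\frac{3C}{w(f)m_j}$ and $\frac{C}{w(f)}+\frac{2C}{n_j}$.

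For the concluding norm equivalence I argue the two inequalities directly. The lower bound is immediate from the implicit norm formula: choosing $E_k=\ran x_k$ for $k=1,\dots,l$ (successive intervals, with $l\le n_{2j}$), the $(m_{2j}^{-1},n_{2j})$-term gives
\[
\left\|\tfrac1l\sum_{k=1}^{l}x_k\right\|\ge\frac1{m_{2j}}\sum_{k=1}^{l}\left\|E_k\Big(\tfrac1l\sum_i x_i\Big)\right\|=\frac1{m_{2j}}\sum_{k=1}^{l}\frac{\|x_k\|}{l}=\frac1{m_{2j}},
\]
since the sequence is normalized. The upper bound follows by taking the supremum over $\phi\in\mathcal K_{\omega_1}(\mathbb C)$ in the implicit formula: the $\|\cdot\|_\infty$ part is at most $1/l\le m_{2j}/n_{2j}$, while the type $I$ part is controlled by the first assertion applied with index $2j$, whose largest value (attained near $w(\phi)=m_{2j}$) is $\frac{C}{m_{2j}}+\frac{2C}{n_{2j}}\le \frac{2C}{m_{2j}}$, because $n_{2j}\ge 2m_{2j}$.

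I expect the main obstacle to be the correct use of the Basic Inequality: checking that its hypothesis is genuinely in force for the chosen $R.I.S$, and, above all, keeping careful track of the auxiliary functionals $e^{*}_t$ and $g_2$ it produces so that each error term is dominated by the leading term. This final bookkeeping is exactly where the specific fast growth of the sequences $(m_j)$ and $(n_j)$ is used.
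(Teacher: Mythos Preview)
Your argument is correct and follows the paper's proof almost verbatim: apply the Basic Inequality with constant coefficients, bound the $g_2$ and $e_t^*$ pieces by $C/n_j$ and $C/l\le Cm_j/n_j$ respectively, invoke Proposition~\ref{cbi} for the main term $h_1$, and absorb the lower-order errors. The only difference is the lower bound $\|\frac1l\sum x_k\|\ge 1/m_{2j}$: the paper picks norming functionals $x_k^*\in\mathcal K_{\omega_1}(\mathbb C)$ with $x_k^*(x_k)=1$ and $\ran x_k^*\subseteq\ran x_k$, then tests against $\frac{1}{m_{2j}}\sum_k x_k^*\in\mathcal K_{\omega_1}(\mathbb C)$, whereas you read the same inequality directly off the implicit norm formula with $E_k=\ran x_k$. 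These are two phrasings of the same computation. One small imprecision: the ``standing hypothesis'' of the Basic Inequality concerning a distinguished weight $m_{j_0}$ is an optional refinement (it lets you exclude $m_{j_0}$ from the tree of $h_1$) and is not needed here; property~(iii) of the R.I.S.\ definition is what makes the Basic Inequality apply, but not via that particular clause.
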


\begin{proof} 
Let  $(x_k)_k$ be a $(C,\epsilon)-R.I.S $ and take  $b=\left(\frac{1}{l},\ldots,\frac{1}{l},0,0,\ldots \right)\in c_{00}(\mathbb{N}, \mathbb{C})$. It  follows from the basic inequality  that for every  $f\in \mathcal{K}_{\omega_1}(\mathbb{C})$ of type $I$,  there exist $h_1\in W[(m_j^{-1}, 4n_j)]$  with $\omega(h_1)=\omega(f)$, $t\in \mathbb{N}$ and $g_2\in c_{00}(\mathbb{N})$ with $\|g\|_\infty\leq\epsilon$  such that
\begin{eqnarray*}
\left|f\left(\displaystyle\frac{1}{l}\sum^{l}_{k=1}x_k \right)\right|&\leq& C\left(e^{*}_{t}+h_1+g_2\right)\left(\frac{1}{l}\sum^{l}_{k=1} e_k \right).
\end{eqnarray*}
moreover, 
\begin{eqnarray*}
\left|  g_{2} \left(\displaystyle\frac{1}{l}\sum^{l}_{k=1}e_k \right)\right|&\leq& \|g_2\|_\infty \left\|\frac{1}{l}\sum_{k \in E}e_k\right\|_1 \leq \epsilon\leq \frac{1}{n_j}.
\end{eqnarray*} 

Now by  the estimatives on the auxiliary space $T[(m^{-1}_{j}, 4n_j)_j]$  of  the Proposition \ref{cbi},  we have

\begin{itemize}
\item If  $\omega(f)<m_j$,
\begin{eqnarray*}
\left|f\left(\displaystyle\frac{1}{l}\sum^{l}_{k=1}x_k \right)\right|&\leq& C\left(\frac{1}{l}+\frac{2}{\omega(f)m_j}+\frac{1}{n_j}\right)\\
&\leq& C\left(\frac{m_j}{n_j}+\frac{2}{\omega(f)m_j}+\frac{1}{n_j}\right)\\
&\leq& \frac{3C}{\omega(f)m_j}
\end{eqnarray*}
\item If  $\omega(f)\geq m_j$,
\begin{eqnarray*}
\left|f \left(\displaystyle\frac{1}{l}\sum^{l}_{k=1}x_k \right)\right|&\leq& C\left(\frac{1}{l}+\frac{C}{\omega(f)}+\frac{1}{n_j}\right)\\
&\leq& \frac{C}{\omega(f)}+\frac{2C}{n_j}
\end{eqnarray*}
\end{itemize}
And  notice 
\begin{itemize}
\item $\displaystyle \frac{3C}{\omega(f)m_{2j}}\leq \frac{2C}{m_{2j}}$,  if   $ \omega(f)<m_{2j}$,
\item $\displaystyle \frac{C}{\omega(f)}+\frac{2C}{n_{2j}}\leq \frac{C}{m_{2j}}+\frac{C}{m_{2j}}=\frac{2C}{m_{2j}}$, if  $\omega(f)\geq m_{2j}$.
\end{itemize}
 We conclude from the fact that  $\mathcal{K}_{\omega_1}(\mathbb{C})$  is the norming set: $$ \|  (1/l) \sum^{l}_{k=1} x_k  \|  \leq  2C /m_{2j}.$$
For the proof the second part of the theorem, let $(x_k)^{l}_{k=1}$  be a normalized $(C,\epsilon)-R.I.S$ with  $\epsilon\leq \frac{1}{n_{2j}}$, $l\in \left[\frac{n_{2j}}{m_{2j}},n_{2j}\right]$. For every  $k\leq l$,  we consider $x^{*}_{k}\in \mathcal{K}_{\omega_1}(\mathbb{C})$,  such that  $x^{*}_{k}(x_k)= 1$  and $\ran x^{*}_{k}\subseteq \ran x_k$, then  $x^*=\frac{1}{m_{2j}}\sum^{l}_{k=1}x^{*}_{k}\in \mathcal{K}_{\omega_1}(\mathbb{C})$ and  $x^*\left(\frac{1}{l}\sum^{l}_{k=1}x_k\right) = \frac{1}{m_{2j}}$. Hence, $\frac{1}{m_{2j}}\leq \left\| \frac{1}{l}\sum^{l}_{k=1}x_k\right\|$. 
\end{proof}

\subsection{Proof Step II}
Now we introduce another type of sequences in order to construct the conditional frame in $\mf X_{\omega_{1}}(\mb C)$. In fact, this space has no unconditional basic sequence.

\begin{defi} A pair $(x,\phi)$ with  $x \in \mathfrak{X}_{\omega_1}(\mathbb{C})$ and  $\phi \in \mathcal{K}_{\omega_1}(\mathbb{C})$, is called a $(C,j)$- exact pair when:
\begin{itemize}
\item[(a)] $\|x\|\leq C$,  $\omega(\phi)=m_j$ and  $\phi(x)=1$.
\item[(b)] For each $\psi \in  \mathcal{K}_{\omega_1}(\mathbb{C})$ of type $I$ and $\omega(x)=m_i$, $i\neq j$, we have

$ |\psi(x)|\leq \left\{
\begin{array}{ll}
\displaystyle \frac{2C}{m_i}, \quad \text{if} \, \, i<j  \\ \\
\displaystyle \frac{C}{m^{2}_{j}}  \quad \text{if} \, \, i>j
\end{array}
\right.
$
\end{itemize}
\end{defi}

\begin{pro} \label{ep} Let  $(x_n)_n$ be a normalized block sequence of  $\mathfrak{X}_{\omega_1}(\mathbb{C})$.  Then for every  $j \in \mathbb{N}$, there exist $(x,\phi)$  such that  $ x \in \spa_{\mathbb{R}}(x_n)$, $\phi \in \mathcal{K}_{\omega_1}(\mathbb{C})$ and  $(x,\phi)$ is a $(6,2j)$-exact pair.
\end{pro}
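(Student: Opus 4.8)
The plan is to construct the vector $x$ as a suitable average of a $2$-$\ell_1^{n_{2j}}$-average built inside the real span, and to take $\phi$ as the functional witnessing the lower norm estimate from Proposition~\ref{db}. Concretely, I would first apply Theorem~\ref{w3} to the given normalized block sequence $(x_n)_n$ to extract a normalized $(3,\epsilon)$-$R.I.S$ $(y_k)_k$ in $\spa_{\mathbb R}(x_n)$, choosing $\epsilon \leq 1/n_{2j}$. Then I would set
\[
x = \frac{1}{\|(1/l)\sum_{k=1}^{l} y_k\|}\cdot \frac{1}{l}\sum_{k=1}^{l} y_k, \qquad l \in \left[\frac{n_{2j}}{m_{2j}}, n_{2j}\right],
\]
so that $x$ is normalized and lies in $\spa_{\mathbb R}(x_n)$. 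For the functional, for each $k\leq l$ I would pick $y_k^* \in \mathcal K_{\omega_1}(\mathbb C)$ with $y_k^*(y_k)=1$ and $\ran y_k^* \subseteq \ran y_k$, and set $\phi = \frac{1}{m_{2j}}\sum_{k=1}^{l} y_k^*$, which is the result of an $(m_{2j}^{-1}, n_{2j})$-operation and hence belongs to $\mathcal K_{\omega_1}(\mathbb C)$ with $\omega(\phi)=m_{2j}$. After normalizing $x$ I would rescale so that $\phi(x)=1$; since Proposition~\ref{db} gives $\frac{1}{m_{2j}} \leq \|(1/l)\sum y_k\| \leq \frac{6}{m_{2j}}$ (using $C=3$), the normalization constant is controlled and condition (a), $\|x\|\leq 6$, $\omega(\phi)=m_{2j}$, $\phi(x)=1$, will follow.

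The second part, verifying the exact-pair inequality (b), is where the real work lies, and I expect it to be the main obstacle. For an arbitrary type $I$ functional $\psi$ with $\omega(\psi)=m_i$, $i\neq 2j$, I would apply Proposition~\ref{db} directly to the $R.I.S$ average $\frac{1}{l}\sum_{k=1}^l y_k$. In the case $i < 2j$ (so $\omega(\psi)=m_i < m_{2j}$, taking the index $j$ in Proposition~\ref{db} to be $2j$) the bound $\frac{3C}{\omega(\psi)m_{2j}}$ together with the normalization factor of order $m_{2j}$ yields an estimate of order $\frac{C}{m_i}$, giving $|\psi(x)|\leq 2C/m_i$. In the case $i > 2j$ I would instead set the running index in Proposition~\ref{db} equal to $i$; since $\omega(\psi)=m_i \geq m_i$, the estimate $\frac{C}{\omega(\psi)} + \frac{2C}{n_i}$ combined with the $m_{2j}$-normalization gives a bound of order $\frac{C}{m_i m_{2j}}\leq \frac{C}{m_{2j}^2}$ (using $m_i \geq m_{2j+1}\geq m_{2j}$ and the fast growth of $(m_j)$), which is exactly the required $\frac{C}{m_{2j}^2}$-type bound.

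The delicate point throughout is keeping the constants compatible: the normalized $R.I.S$ has constant $C=3$, but after rescaling $x$ to satisfy $\phi(x)=1$ the effective constant doubles, which is why the target constant in the statement is $6$ rather than $3$. I would therefore track carefully that $\|x\|\leq 6$ and that the two branches of (b) absorb the normalization factor (which is between $m_{2j}/6$ and $m_{2j}$) without exceeding the stated bounds; the rapid growth conditions $m_{j+1}=m_j^4$ and $n_{2j}>m_{2j}$ provide the slack needed. Since all combinations of the $y_k$ use rational scalars and the $R.I.S$ was built in the \emph{real} span (as emphasized after Theorem~\ref{w3}), the vector $x$ genuinely lies in $\spa_{\mathbb R}(x_n)$, so no complex-linearity issue arises. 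The remaining verifications are routine applications of the estimates already established in Propositions~\ref{cbi} and~\ref{db}, exactly as in the real case \cite{ALT}.
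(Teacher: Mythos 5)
Your construction is essentially the paper's: extract a normalized $(3,1/n_{2j})$-$R.I.S$ in $\spa_{\mathbb R}(x_n)$ via Theorem \ref{w3}, take $x$ to be the $(m_{2j}/n_{2j})$-weighted sum of its first $n_{2j}$ terms (equivalently, your rescaled $\ell_1$-average) and $\phi$ the $(m_{2j}^{-1},n_{2j})$-combination of norming functionals, then read off both the norm bound and condition (b) from Proposition \ref{db}; the only cosmetic difference is that the paper takes $\phi_i(y_i)>1-\epsilon$ and perturbs $x$ by rational coefficients to force $\phi(x)=1$, rather than assuming the norm of $y_k$ is exactly attained on $\mathcal K_{\omega_1}(\mathbb C)$. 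One step of yours needs repair: in the case $i>2j$ you cannot apply Proposition \ref{db} ``with running index $i$,'' since its hypotheses require $l\in[n_i/m_i,n_i]$ and $\epsilon\le 1/n_i$, both of which fail for $l=n_{2j}$ and $\epsilon=1/n_{2j}$ when $i>2j$. The correct move is the second branch of Proposition \ref{db} with index $2j$ (since $w(\psi)=m_i\ge m_{2j}$), which after the $m_{2j}$-rescaling gives $|\psi(x)|\le m_{2j}\left(\frac{3}{m_i}+\frac{6}{n_{2j}}\right)$, and the growth conditions $m_i\ge m_{2j+1}=m_{2j}^4$ and $n_{2j}\ge m_{2j}^3$ then yield the required bound of order $1/m_{2j}^2$.
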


\begin{proof}  Fix $(x_n)_n$ a normalized block sequence of $\mathfrak{X}_{\omega_1}(\mathbb{C})$  and a positive integer $j$. By the Proposition \ref{w3}  there exists $(y_n)_n$ a normalized  $(3, 1/n_{2j})-R.I.S$    in  $\spa_{\mathbb{R}}(x_n)$.  For every  $1\leq i\leq n_{2j}$  and  $\epsilon>0$, we take $\phi_i \in \mathcal{K}_{\omega_1}(\mathbb{C})$ such that  $\phi_i(y_i)>1-\epsilon$ , and  $\phi_i< \phi_{i+1}$.  Let  $x= (m_{2j}/n_{2j} )\sum_{i=1}^{n_{2j}}y_i$ and  $\phi=(1/m_{2j})\sum_{i=1}^{n_{2j}}\phi_i \in  \mathcal{K}_{\omega_1}(\mathbb{C})$.  By perturbating $x$ by a rational coefficient on the support of some $y_i$ we may assume that then  $\phi(x)= 1$ and using Proposition \ref{db}  we conclude that $(x,\phi)$ is a $(6,2j)$-exact pair.
\end{proof}

\begin{defi} Let $j\in \mathbb N$. A sequence $(x_1, \phi_1,  \ldots, x_{n_{2j+1}}, \phi_{n_{2j+1}})$ is called  a $(1, j)$-dependent sequence when

\

(DS. 1) $\supp x_1 \cup \supp \phi_1 < \ldots < \supp x_{n_{2j+1}} \cup \supp \phi_{n_{2j+1}}$

\

(DS. 2) The sequence $\Phi = (\phi_1, \ldots , \phi_{n_{2j+1}})$ is a $2j+1$-special sequence.

\

(DS. 3) $(x_i, \phi_i)$ is a $(6, 2j_i)$-exact pair.  $ \#\supp x_i \leq m_{2_{j+1}}/n^2_{2j+1}$ for every $1\leq i \leq n_{2j+1}$ 

\ 

(DS. 4).  For every $(2j+1)$-special sequence $\Psi = (\psi_1, \ldots , \psi_{n_{2j+1}})$ we have that 
\[ \bigcup_{k_{\Phi,\Psi}<i<\lambda_{\Phi,\Psi}} \supp x_i \cap \bigcup_{k_{\Phi,\Psi}<i<\lambda_{\Phi,\Psi} }\supp \psi_i  = \emptyset.
\]
where $k_{\Phi,\Psi}, \lambda_{\Phi,\Psi}$ are numbers introduced in Definition \ref{tp1}.
\end{defi}

\begin{pro} For every   normalized block sequence   $(y_n)_n$ of $\mathfrak{X}_{\omega_1}(\mathbb{C})$, and every  natural number $j$ there exists a $(1,j)$-dependent sequence $(x_1, \phi_1, \ldots , $ $x_{n_{2j+1}},  \phi_{n_{2j+1}})$ such that $x_i $  is in the $\mb R$-span of  $(y_n)_n$ for every $i=1, \ldots , n_{2j+1}$.
\end{pro}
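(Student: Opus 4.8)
The plan is to build the sequence by a single induction on $i=1,\dots,n_{2j+1}$, producing at step $i$ an exact pair $(x_i,\phi_i)$ with $x_i$ in the real span of a tail of $(y_n)_n$, the weights being dictated by the coding $\sigma_{\varrho}$. Proposition \ref{ep} is precisely the engine for this: for any prescribed even index it returns a $(6,\cdot)$-exact pair living in $\spa_{\mb R}(y_n)$. Thus (DS.3)(a) and the requirement that each $x_i$ be a real combination of $(y_n)_n$ come for free, while (DS.2) will hold by construction because we will \emph{define} $w(\phi_{i+1})=m_{\sigma_{\varrho}(\Phi_i)}$. For the base step I would fix an even $j_1$ with $m_{2j_1}>n_{2j+1}^2$ and use Proposition \ref{ep} to get a $(6,2j_1)$-exact pair $(x_1,\phi_1)$ in $\spa_{\mb R}(y_n)$, which respects (SS.1).

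At step $i+1$, given $(x_1,\phi_1,\dots,x_i,\phi_i)$, I choose parameters $p_1^{\Phi}<\dots<p_i^{\Phi}$ strictly increasing and so large that $p_i^{\Phi}\ge \rho\big(\bigcup_{k\le i}(\supp x_k\cup\supp \phi_k)\big)$ (more than the $\mb Q_s$-requirement, but harmless), form $\Phi_i$, and set the next weight to be $m_{\sigma_{\varrho}(\Phi_i)}$; since $\sigma_{\varrho}(\Phi_i)$ is even, Proposition \ref{ep} applies. I then run Proposition \ref{ep} on the tail of $(y_n)_n$ lying strictly past $\max(\supp x_i\cup\supp\phi_i)$, which secures the separation (DS.1). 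The size bound in (DS.3), which is what makes $(x_i)_i$ rapidly increasing, is arranged by over-choosing the $p_i^{\Phi}$: because $\sigma$ satisfies $\sigma(\Phi_i)>(p_i^{\Phi})^2$, taking $p_i^{\Phi}\ge n_{2j+1}^2\,\#\supp x_i$ (possible, as $\#\supp x_i$ is already determined before $\phi_{i+1}$ is chosen) forces $m_{2j_{i+1}}=m_{\sigma_{\varrho}(\Phi_i)}$ to dwarf $n_{2j+1}^2\,\#\supp x_i$, so $\#\supp x_i\le m_{2j_{i+1}}/n_{2j+1}^2$.

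The real work is (DS.4), the tree-like interference with an \emph{arbitrary} competing special sequence $\Psi$, and this is where the $\varrho$-function does its job. The one extra thing I must arrange during the induction is that $\max\supp x_i=\max\supp \phi_i$ (a harmless adjustment of the last norming functional in the exact pair so that $\ran\phi_i$ reaches the top of $\ran x_i$); together with $p_i^{\Phi}\ge \rho(\supp x_i\cup\supp\phi_i)$ this gives $\supp x_i\subseteq \overline{\supp\phi_i}^{\,p_i^{\Phi}}$, since every $\alpha\in\supp x_i$ has $\alpha\le \max\supp\phi_i=:\beta\in\supp\phi_i$ with $\varrho(\alpha,\beta)\le \rho(\supp x_i\cup\supp\phi_i)\le p_i^{\Phi}$. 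Now let $\Psi=(\psi_1,\dots,\psi_{n_{2j+1}})$ be any $2j+1$-special sequence and let $\kappa_{\Phi,\Psi}\le\lambda_{\Phi,\Psi}$ be the indices furnished by the tree-like interference property (TP.1--TP.4). For $\kappa_{\Phi,\Psi}<i<\lambda_{\Phi,\Psi}$ one has $p_i^{\Phi}<p_{\lambda_{\Phi,\Psi}}^{\Phi}$ and $\supp\phi_i\subseteq\bigcup_{k<\lambda_{\Phi,\Psi}}\supp\phi_k$, hence
\[
\supp x_i\subseteq \overline{\supp\phi_i}^{\,p_i^{\Phi}}\subseteq \overline{\textstyle\bigcup_{k<\lambda_{\Phi,\Psi}}\supp\phi_k}^{\,p_{\lambda_{\Phi,\Psi}}-1}.
\]
By TP.3 the right-hand set is disjoint from $\supp\psi_{i'}$ for every $\kappa_{\Phi,\Psi}<i'<\lambda_{\Phi,\Psi}$, whence $\supp x_i\cap\supp\psi_{i'}=\emptyset$ for all such $i,i'$, which is exactly (DS.4). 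I expect this last point—forcing $\supp x_i$ into the $\varrho$-closure of $\supp\phi_i$ by range-matching the exact pair and overshooting $p_i^{\Phi}$—to be the only genuinely delicate step; everything else is a faithful transcription of the construction in \cite{ALT}, carried out in the real span so that Proposition \ref{ep} keeps the $x_i$ real.
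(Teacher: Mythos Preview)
Your proposal is correct and follows essentially the same inductive construction as the paper: base step via Proposition~\ref{ep} with $m_{2j_1}>n_{2j+1}^2$, weights at later steps dictated by $\sigma_{\varrho}$, parameters $p_i^{\Phi}$ chosen large enough to absorb both the $\varrho$-diameter of $\bigcup_{k\le i}(\supp x_k\cup\supp\phi_k)$ and the quantity $n_{2j+1}^2\,\#\supp x_i$, and (DS.4) read off from (TP.3). The paper's proof simply records the three inductive conditions and asserts that (DS.4) ``follows from (3) and (TP.3)'' without further detail; your verification of (DS.4) via the range-matching trick $\max\supp x_i=\max\supp\phi_i$ (so that $\supp x_i\subseteq\overline{\supp\phi_i}^{\,p_i^{\Phi}}$) is a clean way to make that implication explicit, and is easily arranged inside Proposition~\ref{ep} by extending the range of the last functional in the exact pair.
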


\begin{proof} 
Let  $(y_n)_n$ be a normalized block sequence of $\mathfrak{X}_{\omega_1}(\mathbb{C})$ and $j\in \mathbb N$.  We construct the sequence  $(x_1, \phi_1, \ldots , $ $x_{n_{2j+1}},  \phi_{n_{2j+1}})$ inductively. First using Proposition \ref{ep}  we choose   a $(6,2j_1)$-exact pair  $(x_1, \phi_1)$ such that $j_1$ is even,  $m_{2j_1}> n_{2j+1}^2$ and $x_1\in \spa_{\mathbb R} (y_n)_n$. Assume that we have constructed $(x_1, \phi_1, \ldots , $ $x_{l-1},  \phi_{l-1})$ such that there exists $(p_1, \ldots, p_{l-1})$ satisfying

\begin{enumerate}

\item  $\supp x_1 \cup \supp \phi_1 < \ldots < \supp x_{l-1} \cup \supp \phi_{l-1} $, where $x_i \in \spa_{\mathbb R} (y_n)_n$ and $(x_i, \phi_i)$ is a $(6,2j_i)$-exact pair.
\item For $1<i\leq l-1$, $w(\phi_i)= \sigma_{\varrho}(\phi_1, w(\phi_1), p_1,  \ldots, \phi_{i-1}, w(\phi_{i-1}), p_{i-1})$.
\item For $1\leq i<l-1$,  $p_i\geq \max\{ p_{i-1},p_{F_i} \}$, where $F_i = \cup_{k=1}^i \supp \phi_k \cup \supp x_k$. 
\end{enumerate}
To complete the inductive construction   choose  $p_{l-1}\geq \max \{ p_{l-2}, p_{F_{i-1}}, n_{2j+1}^2$ $\# \supp x_{l-1}\}$ and  $2j_l= \sigma_{\varrho} ( \phi_1, w(\phi_1), p_1,  \ldots, \phi_{l-1}, w(\phi_{l-1}), p_{l-1} )$.  Hence take a $(6,2j_l)$-exact pair $(x_l, \phi_l)$ such that $x_l \in  \spa_{\mathbb R} (y_n)_n$ and $\supp x_{l-1}\cup \supp \phi_{l-1} < \supp x_l \cup \supp \phi_l$. 
Notice that properties $(DS.1), (DS.2)$ and $(DS.3)$ are clear by definition of the sequence and $(DS.4)$ follows from (3) and $(TP.3)$.     
\end{proof}

Modifying a little the previous argument we obtain the following:

\begin{pro}\label{w6} For every  two  normalized block sequences $(y_n)_n$ and $(z_n)_n$  of $\mathfrak{X}_{\omega_1}(\mathbb{C})$, and every $j \in \mathbb N$ there exists a $(1,j)$-dependent sequence $(x_1, \phi_1, \ldots $ $\ldots, x_{n_{2j+1}}, \phi_{n_{2j+1}})$ such that $x_{2l-1} \in \spa_{\mathbb R}(y_n)$ and $x_{2l} \in \spa_{\mathbb R}(z_n)$ for every $l=1, \ldots , n_{2j+1}$.\qed
\end{pro}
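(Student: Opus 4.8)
The plan is to run exactly the inductive construction from the proof of the preceding proposition, changing only the source from which each vector $x_i$ is drawn. The single structural fact that makes this possible is Proposition \ref{ep}: for \emph{any} normalized block sequence and any prescribed even weight it produces a $(6,2j_i)$-exact pair $(x_i,\phi_i)$ with $x_i$ in the \emph{real} span of that block sequence. Thus at stage $i$ we are free to feed Proposition \ref{ep} either the sequence $(y_n)_n$ or the sequence $(z_n)_n$ according to the parity of $i$, and the construction of the special sequence $\Phi=(\phi_1,\ldots,\phi_{n_{2j+1}})$ via the $\sigma_\varrho$-coding is completely unaffected by this choice, since the coding reads off only the previously chosen functionals and the naturals $p_i$.

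Concretely, I would build $(x_1,\phi_1,\ldots,x_{n_{2j+1}},\phi_{n_{2j+1}})$ together with an increasing sequence $(p_i)$ of naturals exactly as before. For the base step I pick a $(6,2j_1)$-exact pair $(x_1,\phi_1)$ with $j_1$ even, $m_{2j_1}>n_{2j+1}^2$ and $x_1\in\spa_{\mb R}(y_n)_n$. For the inductive step, having produced $(x_1,\phi_1,\ldots,x_{l-1},\phi_{l-1})$ and $p_1,\ldots,p_{l-2}$, I choose $p_{l-1}\geq\max\{p_{l-2},p_{F_{l-1}},n_{2j+1}^2\,\#\supp x_{l-1}\}$ and set $2j_l=\sigma_{\varrho}(\phi_1,w(\phi_1),p_1,\ldots,\phi_{l-1},w(\phi_{l-1}),p_{l-1})$, precisely as in the previous proof. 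Then I apply Proposition \ref{ep} to $(y_n)_n$ when $l$ is odd and to $(z_n)_n$ when $l$ is even, obtaining a $(6,2j_l)$-exact pair $(x_l,\phi_l)$ with $x_l$ in the required real span. By construction this forces $x_{2l-1}\in\spa_{\mb R}(y_n)$ and $x_{2l}\in\spa_{\mb R}(z_n)$, which is the only new demand of the statement.

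The one point that requires care --- and the only place where alternating between two block sequences could conceivably cause trouble --- is the support-ordering requirement $\supp x_{l-1}\cup\supp\phi_{l-1}<\supp x_l\cup\supp\phi_l$ of (DS.1): I must realize $x_l$ inside a \emph{prescribed} block sequence yet with support lying entirely to the right of everything already constructed. This is available because a block sequence has supports cofinal in $\omega_1$, so Proposition \ref{ep} applied to a sufficiently far tail of $(y_n)_n$ (resp.\ $(z_n)_n$) yields an exact pair supported arbitrarily far out; the cardinality constraint on $\#\supp x_l$ demanded in (DS.3) is likewise met by the support control inherent in Proposition \ref{ep}. Once the ordering is secured, the remaining properties follow verbatim from the previous argument: (DS.2) holds because the weights $w(\phi_i)=m_{2j_i}$ are dictated by the $\sigma_\varrho$-coding, (DS.3) is exactly the output of Proposition \ref{ep}, and (DS.4) follows from the lower bound imposed on $p_{l-1}$ together with property (TP.3) of the tree-like interference of two special sequences.
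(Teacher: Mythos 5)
Your proposal is correct and is exactly the modification the paper intends: the paper offers no separate argument for this proposition, merely remarking that it follows by ``modifying a little the previous argument,'' and alternating the source of the exact pairs by parity (via Proposition \ref{ep}) while leaving the $\sigma_\varrho$-coding and the choice of the $p_i$ untouched is precisely that modification. One small correction to your justification of the interleaving step: the supports of a countable block sequence are not ``cofinal in $\omega_1$''; what one actually uses is that the two block sequences have mutually cofinal supports, which holds in the intended application (Proposition \ref{w8}), where both are block subsequences of a common block sequence.
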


Another consequence of the basic inequality is the following proposition.

\begin{pro}\label{w5} Let  $(x_1, \phi_1, ...,x_{n_{2j+1}}, \phi_{n_{2j+1}} )$ be a $(1,j)$dependent sequence. Then:
\begin{enumerate}
\item$\|\displaystyle\frac{1}{n_{2j+1}}\sum^{n_{2j+1}}_{i=1}x_i\|\geq\frac{1}{m_{2j+1}}$
\item$\|\displaystyle \frac{1}{n_{2j+1}}\sum^{n_{2j+1}}_{i=1}(-1)^{i+1}x_i\|\leq \frac{1}{m^{3}_{2j+1}}$
\end{enumerate} 
\end{pro}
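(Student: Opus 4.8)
The plan is to estimate the norms of $y^{+}=\frac{1}{n_{2j+1}}\sum_{i=1}^{n_{2j+1}}x_i$ and $y=\frac{1}{n_{2j+1}}\sum_{i=1}^{n_{2j+1}}(-1)^{i+1}x_i$ by testing them against the norming set $\mathcal K_{\omega_1}(\mathbb C)$, using the special-sequence structure of $\Phi=(\phi_1,\dots,\phi_{n_{2j+1}})$ for the lower bound and the full force of the Basic Inequality for the upper bound.

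For (1) I would simply exhibit a single norming functional. Since $\Phi$ is $(2j+1)$-special by (DS.2), the vector $\phi=\frac{1}{m_{2j+1}}\sum_{i=1}^{n_{2j+1}}\phi_i$ belongs to $\mathcal K_{\omega_1}(\mathbb C)$ by Property 3 of the norming set. By (DS.1) the sets $\supp x_i\cup\supp\phi_i$ are successive, so $\phi_i(x_k)=0$ whenever $i\neq k$, while the exact-pair condition gives $\phi_i(x_i)=1$. Hence $\phi(y^{+})=\frac{1}{m_{2j+1}}$, which is real, and therefore $\|y^{+}\|\ge\frac{1}{m_{2j+1}}$.

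For (2), the crucial H.I.\ estimate, I would combine the Basic Inequality with the conditional (tree-like) interference of special sequences, as in \cite{ALT}. First, the vectors $(x_i)$ form a $(6,\epsilon)$-$R.I.S$ for a suitably small $\epsilon$: each $x_i$ is the first member of a $(6,2j_i)$-exact pair whose weights $m_{2j_i}$ increase rapidly through the coding in (DS.2), and $\#\supp x_i\le m_{2j+1}/n_{2j+1}^{2}$ by (DS.3). I would then apply the Basic Inequality with $j_0=2j+1$ and coefficients $b_i=(-1)^{i+1}$. The hard part, and the technical heart of the whole argument, is to verify its hypothesis: that for every $f\in\mathcal K_{\omega_1}(\mathbb C)$ of weight $m_{2j+1}$ and every interval $E$ one has $|f(\sum_{i\in E}(-1)^{i+1}x_i)|\le C(\max_{i\in E}|b_i|+\epsilon\sum_{i\in E}|b_i|)$. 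Such an $f$ is a special functional $\frac{1}{m_{2j+1}}\sum_i\psi_i$ with $\Psi=(\psi_i)$ a $(2j+1)$-special sequence, and I would compare $\Phi$ with $\Psi$ through the interference numbers $\kappa_{\Phi,\Psi}\le\lambda_{\Phi,\Psi}$ of Definition \ref{tp1}. For $i<\kappa_{\Phi,\Psi}$ one has $\psi_i=\phi_i$, so $\psi_i(x_i)=1$; but these diagonal contributions enter with alternating signs over an interval of indices and hence telescope to modulus at most $1$, which is precisely what the alternating signs are for. The $O(1)$ indices near $\kappa_{\Phi,\Psi}$ and $\lambda_{\Phi,\Psi}$ contribute boundedly, while for $i>\lambda_{\Phi,\Psi}$ the weights $w(\psi_i)$ are disjoint from every $w(\phi_k)$ by (TP.4) and the supports are separated by (DS.4), so the exact-pair estimates force $|\psi_i(x_k)|$ to be negligible. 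Dividing by $m_{2j+1}$ then yields the desired hypothesis.

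Finally, I would feed this into the Basic Inequality to obtain $g_1\in W[(m_{2j+1}^{-1},4n_{2j+1})]$ whose tree analysis avoids the weight $m_{2j+1}$, together with $g_2$ satisfying $\|g_2\|_\infty\le\epsilon$, such that $|f(y)|\le C(g_1+g_2)(\frac{1}{n_{2j+1}}\sum_i e_i)$ for every type-$I$ functional $f$. Since the tree analysis of $g_1$ omits the weight $m_{2j+1}$ and $n_{2j+1}\in[n_{2j+1}/m_{2j+1},\,n_{2j+1}]$, the last estimate of Proposition \ref{cbi} gives $|g_1(\frac{1}{n_{2j+1}}\sum_i e_i)|\le 2/m_{2j+1}^{3}$, while $|g_2(\frac{1}{n_{2j+1}}\sum_i e_i)|\le\epsilon$. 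As $\|y\|_\infty$ is negligible and the bound is uniform over the norming set, a bound of order $m_{2j+1}^{-3}$ results, and the rapid growth of $(m_j)$ and $(n_j)$ absorbs the remaining constants to give $\|y\|\le 1/m_{2j+1}^{3}$. I expect the complex setting to require only the routine check that all these estimates survive the passage from $\mathbb C$-linear to $\mathbb R$-linear combinations, which is guaranteed because the $R.I.S$ and exact pairs are built with rational complex coefficients.
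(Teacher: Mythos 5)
Your proposal is correct and follows essentially the same route as the paper: part (1) is exactly the evaluation of the special functional $\frac{1}{m_{2j+1}}\sum_i\phi_i$ against $\sum_i x_i$, and part (2) is the Basic Inequality combined with the estimate of Proposition \ref{cbi} for functionals whose tree analysis avoids the weight $m_{2j+1}$, with the hypothesis of the Basic Inequality verified through the tree-like interference of special sequences. The paper simply cites \cite[Proposition 3.7]{ALT} for these details, so your sketch is in fact more explicit than the printed proof.
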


\begin{proof}The first inequality is clear since the functional $\phi=1/m_{2j+1}\sum_{i=1}^{n_{2j+1}} \phi_i$ $ \in \mathcal{K}_{\omega_1}(\mathbb{C})$ and $\phi (\sum^{n_{2j+1}}_{i=1}x_i)= n_{2j+1}/m_{2j+1}$.   The second is obtained  by the Basic Inequality. For the complete proof see \cite[Proposition 3.7]{ALT}.
\end{proof}

We now can give a proof of Step II.

\begin{pro} \label{w8} Let $(y_n)_n$  be a normalized block sequence of  $\mathfrak{X}_{\omega_1}(\mathbb{C})$. Then  the closure of the \emph{real} span  of $(y_{n})_{n}$  is  H.I.
\end{pro}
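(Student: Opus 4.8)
The plan is to verify the $\epsilon$-criterion for hereditary indecomposability recalled before Step~II: given two infinite-dimensional closed subspaces $Y,Z$ of $W:=\overline{\spa}_{\mathbb R}(y_n)_n$ and $\epsilon>0$, I must produce unit vectors $y\in Y$, $z\in Z$ with $\|y-z\|<\epsilon$. First I would reduce to block subspaces. By a standard gliding-hump (Bessaga--Pe\l czy\'nski) argument, any infinite-dimensional subspace of $W$ contains, up to an arbitrarily small perturbation, a normalized real block subsequence of $(y_n)_n$, hence a normalized block sequence of the basis $(e_\alpha)_\alpha$. Since these perturbations can be made to decay fast enough to be absorbed into $\epsilon$ at the end, it suffices to work with two normalized block sequences $(u_n)_n$ almost inside $Y$ and $(v_n)_n$ almost inside $Z$.

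The heart of the argument is the interference between the two subspaces encoded by a dependent sequence. Applying Proposition~\ref{w6} to $(u_n)_n$ and $(v_n)_n$, I obtain for each $j\in\mathbb N$ a $(1,j)$-dependent sequence $(x_1,\phi_1,\ldots,x_{n_{2j+1}},\phi_{n_{2j+1}})$ with $x_{2l-1}\in\spa_{\mathbb R}(u_n)_n$ and $x_{2l}\in\spa_{\mathbb R}(v_n)_n$ for every $l$. Setting
\[
a=\sum_{i\ \mathrm{odd}}x_i\in\spa_{\mathbb R}(u_n)_n,\qquad b=\sum_{i\ \mathrm{even}}x_i\in\spa_{\mathbb R}(v_n)_n,
\]
I have $a+b=\sum_{i=1}^{n_{2j+1}}x_i$ and $a-b=\sum_{i=1}^{n_{2j+1}}(-1)^{i+1}x_i$, so Proposition~\ref{w5} yields
\[
\|a+b\|\geq\frac{n_{2j+1}}{m_{2j+1}},\qquad \|a-b\|\leq\frac{n_{2j+1}}{m^3_{2j+1}}.
\]
Thus $a$ and $b$ are almost equal while their sum is large, which is exactly the mechanism preventing a direct-sum decomposition.

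It then remains a purely elementary estimate to pass to nearly-equal unit vectors. From the two bounds, $\|a\|\geq\tfrac12(\|a+b\|-\|a-b\|)\geq\frac{n_{2j+1}}{2m_{2j+1}}(1-m_{2j+1}^{-2})$, while $\big|\,\|a\|-\|b\|\,\big|\leq\|a-b\|$; combining these gives
\[
\left\|\frac{a}{\|a\|}-\frac{b}{\|b\|}\right\|\leq\frac{2\|a-b\|}{\|a\|}\leq\frac{4}{m^2_{2j+1}-1}.
\]
Since $m_{2j+1}\to\infty$ as $j\to\infty$, choosing $j$ large makes the right-hand side $<\epsilon$; taking $y=a/\|a\|$, $z=b/\|b\|$ and undoing the initial block-perturbation (which moves $a,b$ by an amount controllable through the $\delta_n$, as the $x_i$ have norm $\leq 6$ and finite support) produces the desired unit vectors in $Y$ and $Z$.

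I expect the only genuine difficulty to be already packaged into Proposition~\ref{w5}, whose second inequality rests on the basic inequality together with the tree-like interference (TP.1)--(TP.4) of a pair of special sequences; this is what forces the alternating average $\sum(-1)^{i+1}x_i$ to be small. The remaining ingredients are routine, the only point requiring care being that all spans and the whole dependent sequence must be taken in the \emph{real} structure of $\mathfrak{X}_{\omega_1}(\mathbb{C})$ — which is precisely what Proposition~\ref{w6} guarantees, so no extra work is needed there.
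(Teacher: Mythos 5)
Your proposal is correct and follows essentially the same route as the paper: reduce to real block subspaces, use Proposition~\ref{w6} to build a $(1,j)$-dependent sequence alternating between the two subspaces, and apply the two estimates of Proposition~\ref{w5} to the sum and the alternating sum. The only cosmetic differences are that you make the gliding-hump reduction and the final normalization explicit, while the paper works directly with the equivalent criterion $\|z-w\|\leq\epsilon\|z+w\|$.
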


\begin{proof}  Let $(y_n)_n$ be a  normalized block sequence  of $\mathfrak{X}_{\omega_1}(\mathbb{C})$.  Fix  $\epsilon> 0$  and two block subsequences $(z_n)_n$ and  $(w_n)_n$  in $\spa_{\mb R} (y_{n})_{n}$.  Take an integer  $j$ such that $m_{2j+1}\epsilon>1$. By Proposition  \ref{w6} there exist   a  $(1,j)$-dependent sequence  $(x_1, \phi_1, ...,x_{n_{2j+1}}, \phi_{n_{2j+1}} )$  such that  $x_{2i-1}\in \spa_{\mathbb{R}}(z_n)$ and  $x_{2i}\in \spa_{\mathbb{R}}(w_n)$. We define $z= (1/n_{2j+1})\sum^{n_{2j+1}}_{i=1(odd)}x_i$  and $w= 1/n_{2j+1}\sum^{n_{2j+1}}_{i=1(even)}x_i$. Notice that  $z \in \spa_{\mathbb{R}}(z_n)$ and  $w \in \spa_{\mathbb{R}}(w_n)$. Then by Proposition  \ref{w5}  we get  $\|z+w\|\geq 1/m_{2j+1}$  and  $\|z-w\|\geq 1/m^{2}_{2j+1}$. Hence $\|z-w\|\leq \epsilon \|z+w\|$.
\end{proof}

\subsection{ Proof of Step III}

\begin{defi}
A sequence $(z_1, \phi_1, \ldots , z_{n_{2j+1} }, \phi_{n_{2j+1} })$ is called a $(0,j)$- dependent sequence  when  it satisfies the following conditions:

\begin{itemize}
 \item (0DS.1) The sequence $\Phi= (\phi_1, \ldots, \phi_{n_{2j+1}})$ is a $2j+1$-special sequence and $\phi_i(z_k)=0$ for every $1\leq i,k \leq n_{2j+1}$.
 \item (0DS.2) There exists $\{ \psi_1, \ldots , \psi_{n_{2j+1}} \} \subseteq \mathcal{K}_{\omega_1}(\mathbb{C})$ such that $w(\psi_i)= w(\phi_i)$, $\# \supp z_i \leq w(\phi_{i+1})/n_{2j+1}^2$ and $(z_i, \psi_i)$ is a $(6, 2j_1)$- exact pair for every $1\leq i \leq n_{2j+1}$.
 \item (0DS.3) If $H=(h_1, \ldots , h_{n_{2j+1} })$ is an arbitrary $2j+1$-special sequence, then
 \[  \left ( \bigcup_{\kappa_{\Phi, H}<i< \lambda_{\Phi, H}} \supp z_i \right ) \cap  \left ( \bigcup_{\kappa_{\Phi, H}<i< \lambda_{\Phi, H}} \supp h_i \right ) = \emptyset.
  \]

\end{itemize}
\end{defi}

\begin{pro}  For every $(0,j)$-dependent sequence $(x_{1}, \phi_{1}, \ldots ,  x_{n_{2j+1}}, $ $\phi_{n_{2j+1}})$ we have that
\[ \left \|  \frac{1}{n_{2j+1}} \sum_{k=1}^{n_{2j+1}} x_{k} \right \|\leq \frac{1}{m_{2j+1}^{2}}.
\]
\end{pro}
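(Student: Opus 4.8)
The plan is to bound $|f(\frac{1}{n_{2j+1}}\sum_{k=1}^{n_{2j+1}}x_k)|$ uniformly over $f\in\mathcal{K}_{\omega_1}(\mathbb{C})$, which by definition of the norm gives the estimate. The whole argument parallels Proposition \ref{w5}(2) and the corresponding statement in \cite{ALT}; the novelty is that the role played there by the alternating signs $(-1)^{i+1}$ (which produce cancellation when the canonical weight-$m_{2j+1}$ functional is applied) is here played by the annihilation condition (0DS.1), namely $\phi_i(x_k)=0$, so that the \emph{aligned} average is already small. The price is that one only reaches $1/m_{2j+1}^2$ rather than the $1/m_{2j+1}^3$ of the alternating case.

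First I would record that the vectors $(x_k)$ form a $(C,\epsilon)$-R.I.S., with $C$ a fixed multiple of the exact-pair constant $6$ and $\epsilon\le 1/n_{2j+1}$ coming from the support bound in (0DS.2): indeed each $x_k$ comes from a $(6,2j_k)$-exact pair so $\|x_k\|\le 6$, the support bound gives R.I.S. condition (ii), and exact-pair condition (b) gives R.I.S. condition (iii), since the weights $w(\psi_k)=w(\phi_k)$ increase rapidly along the special sequence. This lets me run the Basic Inequality with $j_0=2j+1$ and $b_k=1/n_{2j+1}$.

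The crux, and the main obstacle, is verifying the hypothesis of the Basic Inequality at the distinguished weight: for every $f\in\mathcal{K}_{\omega_1}(\mathbb{C})$ with $w(f)=m_{2j+1}$ and every interval $E$ of $\omega_1$,
\[ \Bigl| f\Bigl(\sum_{k\in E} b_k x_k\Bigr)\Bigr| \le C\Bigl(\max_{k\in E}|b_k| + \epsilon\sum_{k\in E}|b_k|\Bigr). \]
Such an $f$ is (an interval restriction of) a special functional $\frac{1}{m_{2j+1}}\sum_i h_i$ with $H=(h_i)$ a $2j+1$-special sequence, so I would compare $H$ with the special sequence $\Phi=(\phi_i)$ of the dependent sequence through the tree-like interference, obtaining $\kappa_{\Phi,H}\le\lambda_{\Phi,H}$. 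For $i<\kappa_{\Phi,H}$, property TP.2 gives $h_i=\phi_i$, which annihilate every $x_k$ by (0DS.1); for $\kappa_{\Phi,H}<i<\lambda_{\Phi,H}$, property (0DS.3) together with TP.3 forces the supports apart; and for $i>\lambda_{\Phi,H}$, property TP.4 makes $w(h_i)$ disjoint from all the weights $w(\psi_k)$, so exact-pair condition (b) bounds each $|h_i(x_k)|$ by a negligible quantity. Only the two boundary indices $\kappa_{\Phi,H},\lambda_{\Phi,H}$ survive, each contributing a single term of size $\le 6$, which after the factors $1/m_{2j+1}$ and $b_k=1/n_{2j+1}$ is absorbed into the $\max|b_k|$ term. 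The delicate point hidden here is the interval restriction of $f$: a sub-interval of $\phi_i$ need not annihilate $x_i$ outright, and one controls this using the averaging ($\ell_1^{n}$-average) structure of the exact-pair vector $x_i$, exactly as in the real case.

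With the hypothesis in hand, the Basic Inequality yields, for every type-$I$ functional $f$, a domination $|f(\frac{1}{n_{2j+1}}\sum_k x_k)|\le C(g_1+g_2)(\frac{1}{n_{2j+1}}\sum_k e_k)$ in the auxiliary space, where the tree analysis of $g_1\in W[(m_j^{-1},4n_j)]$ contains no node of weight $m_{2j+1}$ and $\|g_2\|_\infty\le\epsilon$. Since $n_{2j+1}$ lies at the top of the range $[n_{2j+1}/m_{2j+1},n_{2j+1}]$, the ``no node of weight $m_{2j+1}$'' case of Proposition \ref{cbi} gives $|g_1(\frac{1}{n_{2j+1}}\sum_k e_k)|\le 2/m_{2j+1}^3$, while the $g_2$ term is at most $\epsilon\le 1/n_{2j+1}$. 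Since $n_{2j+1}$ is enormously larger than $m_{2j+1}$ and $m_{2j+1}$ is large, $C(2/m_{2j+1}^3+\epsilon)\le 1/m_{2j+1}^2$, the desired bound. Apart from the bookkeeping above, the only thing left to check is that every step survives the passage from real to complex scalars; this is immediate, because the interference lemma is purely combinatorial in supports and weights and $\mathcal{K}_{\omega_1}(\mathbb{C})$ is closed under rational-complex multiples of the unit ball, so the \cite{ALT} argument transfers verbatim.
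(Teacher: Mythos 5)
Your outline is correct and follows exactly the route the paper takes: the paper gives no argument of its own here, simply citing \cite[Proposition 5.23]{ALT}, whose proof is precisely the scheme you describe (the $x_k$ form a R.I.S., the tree-like interference properties TP.1--TP.4 together with (0DS.1)--(0DS.3) verify the Basic Inequality hypothesis at weight $m_{2j+1}$, and the auxiliary-space estimate of Proposition \ref{cbi} then yields the bound). Nothing to add beyond the bookkeeping you already flag.
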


\begin{proof} \cite[Proposition 5.23]{ALT}.
\end{proof}

\begin{pro} Let $(y_n)_n$ be a  $(C,\epsilon)-R.I.S$, $Y=\overline{\spa}_{\mb C}(y_n)$,  and  $T:Y\longrightarrow \mathfrak{X}_{\omega_1}(\mathbb{C})$  a $\mathbb{R}$-linear bounded operator. Then $\displaystyle\lim_{n \rightarrow \infty}d(Ty_n, \mathbb{C}y_n)=0$. 
\end{pro}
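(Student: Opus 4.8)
The plan is to argue by contradiction and to manufacture, from the hypothetical failure, a $(0,j)$-dependent sequence to which the preceding norm estimate applies. Suppose the conclusion fails: then there are $\delta>0$ and a subsequence, still denoted $(y_n)_n$, with $d(Ty_n,\mathbb{C}y_n)\ge\delta$ for every $n$. Since the norm is given by $\mathcal{K}_{\omega_1}(\mathbb{C})$ and, by Proposition \ref{bal}, $\overline{\mathcal{K}_{\omega_1}(\mathbb{C})}^{\omega^*}=B_{\mathfrak{X}^*_{\omega_1}(\mathbb{C})}$, I can use the duality formula $d(Ty_n,\mathbb{C}y_n)=\sup\{|g(Ty_n)| : g\in B_{\mathfrak{X}^*_{\omega_1}(\mathbb{C})},\, g(y_n)=0\}$ to pick, for each $n$, a finitely supported $g_n\in\mathcal{K}_{\omega_1}(\mathbb{C})$ with $g_n(y_n)=0$ (arranging the support off $\ran y_n$, using that $\mathcal{K}_{\omega_1}(\mathbb{C})$ is closed under interval restriction) and $|g_n(Ty_n)|\ge\delta/3$. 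Here the mere $\mathbb{R}$-linearity of $T$ causes no trouble: $Ty_n$ is a genuine vector of $\mathfrak{X}_{\omega_1}(\mathbb{C})$, so the $\mathbb{C}$-linear functional $g_n$ evaluates on it to a complex number, and after rotating $g_n$ by a suitable scalar (allowed by Property 1 of the norming set) I may assume $g_n(Ty_n)$ is nearly real and positive.

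Next I would upgrade the individual vectors to exact pairs carrying this \emph{off-direction} information. Following Proposition \ref{ep}, I form $(6,2j_i)$-exact pairs $(z_i,\psi_i)$ with $z_i$ in the real span of a block of the $(y_n)$, where the even weights $2j_i$ are \emph{not} free but are dictated successively by the coding $\sigma_\varrho$, so that $\Phi=(\phi_1,\dots,\phi_{n_{2j+1}})$ becomes a $2j+1$-special sequence. The crux is to produce, for each $i$, a second functional $\phi_i$ of the \emph{same} weight $m_{2j_i}=w(\psi_i)$ that detects the component of $Tz_i$ transverse to $z_i$. Assembling $\phi_i$ as an $(m_{2j_i}^{-1},n_{2j_i})$-combination of restrictions of the functionals $g_k$ from the first paragraph, placed with successive supports disjoint from $\bigcup_k \ran z_k$, should give $\phi_i(z_k)=0$ for all $i,k$ together with a lower bound $|\phi_i(Tz_i)|\ge\delta'$ for some $\delta'>0$ depending only on $\delta$ and $C$. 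Choosing the supports and the parameters $p_i$ exactly as in the earlier dependent-sequence constructions then secures conditions (0DS.1)--(0DS.3), so that $(z_1,\phi_1,\dots,z_{n_{2j+1}},\phi_{n_{2j+1}})$ is a genuine $(0,j)$-dependent sequence.

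Finally I would play the two estimates against each other. Put $z=\frac{1}{n_{2j+1}}\sum_{k} z_k$ and let $\phi=\frac{1}{m_{2j+1}}\sum_i\phi_i\in\mathcal{K}_{\omega_1}(\mathbb{C})$ be the associated special functional, so $\|\phi\|\le 1$. The preceding proposition gives $\|z\|\le m_{2j+1}^{-2}$, while evaluating $\phi$ on $Tz$ and letting the diagonal terms $\phi_i(Tz_i)$ dominate yields $|\phi(Tz)|\ge \delta'/(2m_{2j+1})$; combining these, $\|T\|\ge |\phi(Tz)|/\|z\|\ge \tfrac{1}{2}\delta' m_{2j+1}$. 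Since $j$ was arbitrary and $m_{2j+1}\to\infty$, this contradicts the boundedness of $T$, whence $\lim_n d(Ty_n,\mathbb{C}y_n)=0$. I expect the main obstacle to be precisely the middle step: transferring the uniform off-direction gap from the $y_n$ to exact-pair functionals $\phi_i$ of the coding-prescribed weight, and, above all, bounding the off-diagonal contributions $\phi_i(Tz_k)$ with $i\ne k$ for an operator that is only $\mathbb{R}$-linear. This is exactly where the support separation and the tree-like interference encoded in (0DS.3) must be invoked, and where the arguments of \cite{ALT} have to be checked to survive the passage to the complex, merely real-linear, setting.
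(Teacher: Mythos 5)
Your overall architecture coincides with the paper's: argue by contradiction, manufacture a $(0,j)$-dependent sequence whose vectors are averages of the $y_n$ and whose functionals catch the transverse part of $Ty_n$, and then play the upper estimate $\|z\|\le 12/m_{2j+1}^{2}$ against the lower estimate $|\phi(Tz)|\ge \delta/m_{2j+1}$; the endgame is exactly the paper's. But the middle step, which you yourself flag as the main obstacle, contains a genuine gap that your sketch does not close. To assemble $\phi_i=\frac{1}{m_{2j_i}}\sum_{k\in F_i}f_k$ as a legitimate element of $\mathcal{K}_{\omega_1}(\mathbb{C})$ via an $(m_{2j_i}^{-1},n_{2j_i})$-operation, the functionals $f_k$ must be \emph{successive} ($f_1<f_2<\cdots$), must satisfy $f_k(Ty_{n_k})\ge\delta/2$, and must annihilate all the $y_{n_m}$. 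You propose to obtain them by restricting the Hahn--Banach functionals $g_n$ to intervals ``placed with successive supports disjoint from $\bigcup_k\ran z_k$,'' but you have no control over where the mass of $Ty_{n_k}$ sits: the vectors $Ty_{n_k}$ need not form a block sequence, their ranges can overlap or be nested arbitrarily, and an interval restriction of $g_{n_k}$ forced into a prescribed block position may simply lose the quantity $g_{n_k}(Ty_{n_k})\ge\delta/3$ that you need to retain.

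The paper's mechanism for closing this gap is a minimality argument absent from your proposal. One sets $\gamma_0=\min\{\gamma<\omega_1:\ \exists A\in[\mathbb{N}]^{\infty},\ \inf_{n\in A}d(P_{\gamma}Ty_n,\mathbb{C}y_n)>0\}$, a limit ordinal because the basis is shrinking. Minimality gives, for every $\gamma<\gamma_0$, that $\inf_{n\in A}d(P_{\gamma}Ty_n,\mathbb{C}y_n)=0$; hence one can choose $n_2<n_3<\cdots$ in $A$ and ordinals $\gamma_1<\gamma_2<\cdots<\gamma_0$ so that the transverse component of $Ty_{n_k}$ is concentrated in $[\gamma_{k-1},\gamma_k]$, i.e.\ $d(P_{[\gamma_{k-1},\gamma_k]}Ty_{n_k},\mathbb{C}y_{n_k})>\delta/2$. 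Applying Hahn--Banach together with Proposition \ref{bal} \emph{inside} these pairwise disjoint successive intervals produces functionals $f_k=h_kP_{[\gamma_{k-1},\gamma_k]\cap\ran Ty_{n_k}}$ that are automatically successive, norm the transverse part, and (after a case analysis on the position of $\sup_n\max\supp y_n$ relative to $\gamma_0$) avoid the supports of the $y_{n_m}$. Without this localization your $(0,j)$-dependent sequence cannot be built, so the proposal as written is not a proof; the rest of your argument (weights dictated by $\sigma_\varrho$, verification of (0DS.1)--(0DS.3) via the tree-like interference, and the final contradiction with the boundedness of $T$) matches the paper once the $f_k$ are in hand.
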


\begin{proof} 
Suppose that $\displaystyle\lim_{n \rightarrow \infty}d(Ty_n, \mathbb{C}y_n)\neq0$.  Then there exists  an infinite subset $B\subseteq \mathbb N$ such that $\inf_{n\in B} d(Ty_n, \mathbb{C}y_n)> 0$. We shall show that  for every $\epsilon>0$ there exists $y\in Y$ such that $\|y\|<\epsilon\|Ty\|$ and this is a contradiction.

{\bf Claim 1} There exists a limit ordinal $\gamma_0$, $A \subseteq \mathbb N$ infinite and $\delta>0$ such that 
\[ \inf_{n\in A} d(P_{\gamma_0}Ty_n, \mathbb C y_n)> \delta \]

To prove this claim we observe that

\[ \gamma_0= \min\{ \gamma<\omega_1 :  \exists A\in [\mathbb N]^{\infty}  \inf_{n\in A} d(P_{\gamma}Ty_n, \mathbb C y_n)> 0\}
\]
is a limit ordinal.  In fact,  by the assumption the set on the right side is not empty. And if $\gamma_0$ is not limit, then we have $\gamma_0= \beta +1$. The sequence $(y_n)_n$  is weakly null (because $(e_{\alpha})_{\alpha}$ is shrinking) and then
\[ \lim_{n\rightarrow \infty} e_{\beta+1}^*Ty_n=0
\]
And for large $n$ and every $\lambda \in \mathbb C$
\begin{eqnarray*}
 \|P_{\beta} Ty_n- \lambda y_n\| &\geq&   \|P_{\beta +1} Ty_n- \lambda y_n\|- \|e_{\beta+1}^*Ty_n \|\\
 &\geq& \delta-| e_{\beta+1}^*Ty_n|\geq \delta/2,
\end{eqnarray*}
which  is a contradiction.

\

{\bf Claim 2}  Fix $\gamma_0$ and $A\subseteq \mb N$ as in  Claim 1. Then there exist   a sequence $n_2< n_3< \dots $ in $A$, a sequence of functionals $f_2, f_3, \ldots$ in $\mc K_{\omega_1}(\mb C)$ and   a sequence of ordinals $\gamma_1< \gamma_2 < \ldots < \gamma_0$ such that

\begin{enumerate}
\item $d( P_{[\gamma_{k}, \gamma_{k+1}] }Ty_{n_k}, \mathbb C y_{n_k}) \geq \delta/2$
\item $f_kT_{y_{n_k}} \geq \delta/2$
\item $f_k(y_{n_k})=0$
\item $\ran f_k \subseteq \ran Ty_{n_k}$
\item $\supp f_k \cap \supp y_{n_m} = \emptyset$ when $m\neq k$
\end{enumerate}
To prove this claim, let $\xi= \sup \max \supp y_n$.  We analyze the three possibilities for $\xi$:

\

Case a.) $\xi < \gamma_0$. 
 Let $n_1= \min A$ and choose $\xi <\gamma_1 < \gamma_0$ such that 
\[ \| P_{\gamma_0}Ty_{n_1} - P_{\gamma_1}Ty_{n_1}\|< \delta/2,
\]
hence,  $d(P_{\gamma_1}Ty_{n_1}, \mathbb Cy_{n_1}) > \delta/2$. By minimality of $\gamma_0$ we have 
\[ \inf_{n\in A} d(P_{\gamma_1} Ty_n, \mathbb C y_n)=0,
\]
then we can choose $n_2>n_1$ in $A$ such that $d(P_{\gamma_1} Ty_{n_2}, \mathbb C y_{n_2})< \delta/2$ and this implies that
\[ d( (P_{\gamma_0}- P_{\gamma_1}) Ty_{n_2}, \mathbb C y_{n_2})> \delta/2.
\]
Approximating the vector $(P_{\gamma_0}- P_{\gamma_1}) Ty_{n_2}$  choose $\gamma_0> \gamma_2> \gamma_1$ such that
$ \| (P_{\gamma_0}- P_{\gamma_2}) Ty_{n_2}\| 
$ is so small in order to guarantee 
that  $$ d(P_{[\gamma_1, \gamma_2]} Ty_{n_2}, \mathbb C y_{n_2}) > \delta/2.$$ Using  the complex Hahn-Banach theorem, there exists $g_2\in \mathit B_{ \mathfrak{X}^*_{\omega_1}(\mathbb{C}) }$ such that 
\begin{itemize}
\item $ g_2(P_{[\gamma_1, \gamma_2]} Ty_{n_2})> \delta/2$
\item $g_2( y_{n_2})= 0$
\end{itemize}
and  by Proposition \ref{bal} we can choose  $h_2 \in  \mathcal{K}_{\omega_1}(\mathbb{C})$   such that
 $ h_2(P_{[\gamma_1, \gamma_2]} Ty_{n_2})> \delta/2$ and   $h_2( y_{n_2}) $ is arbitrarily small. Replacing $h_2$ by $\alpha h_2 + \beta k_2$ where $|\alpha|+ |\beta|=1$, $k_2(y_{n_2})$ is close enough to 1, and $k_2\in \mathcal{K}_{\omega_1}(\mathbb{C})$ we may assume that $h_2(y_{n_2})=0$.

Let $f_2= h_2 P_{[\gamma_1, \gamma_2] \cap \ran Ty_{n_2}} \in \mathcal{K}_{\omega_1}(\mathbb{C})$.  Again by minimality of $\gamma_0$, there exists $n_3>n_2$ in $A$ such that $d(P_{\gamma_{2}}Ty_{n_{3}}, \mb Cy_{n_{3}})< \delta/2$ and we can choose $\gamma_{0}>\gamma_{3}>\gamma_{2}$ satisfying
\[ d(P_{[\gamma_{2}, \gamma_{3}]}Ty_{n_{3}}, \mb C y_{n_{3}} ) > \delta/2.
\]
Again by Hahn-Banach  and by Proposition \ref{bal} there exists a functional $h_{3} \in \mc K_{\omega_{1}}(\mb C)$ such that
\begin{itemize}
\item $ h_3(P_{[\gamma_2, \gamma_3]} Ty_{n_3})> \delta/2$
\item $h_3( y_{n_3})=0$
\end{itemize}
then we define $f_3= h_3 P_{[\gamma_2, \gamma_3] \cap \ran Ty_{n_3}} \in \mathcal{K}_{\omega_1}(\mathbb{C})$.   The previous argument gives us the way to construct the sequences of  Claim 2.  Properties  (1)-(5) are easy to check, in particular property (5) is true because $\min \supp f_{k} > \xi> \max \supp y_{n_{l}}$ for every positive integers $k,l$.

\

Case b.) $\xi> \gamma_{0}$. In this case we start by picking $n_{1}\in A$ such that $\min \supp y_{n_{1}}> \gamma_{0}$. Then we repeat exactly the same argument that in case a.).

\

Case c.) $\xi=\gamma_{0}$.  We basically repeat the same argument of the case a.)  with the additional  care of maintaining property (5) true. That is, each time we choose the ordinal  $\gamma_{k+1}$ (with $\gamma_{0}> \gamma_{k+1}> \gamma_{k}$) we take it such that $\gamma_{k+1}>  \max \supp y_{n_{k+1}}$. 

\

{\bf Claim 3} There exists  a $(0,j)$- dependent sequence $(z_1, \phi_1, \ldots , z_{n_{2j+1}})$ such that 
\begin{itemize}
\item $z_i \in X$ for every $1\leq i \leq n_{2j+1}$
\item $\ran  \, \phi_k \subseteq \ran Ty_k$ and $\phi_k(Tz_k)> \delta/2$
\end{itemize}

Let $j$ with $m_{2j+1}> 24/\epsilon \delta$. Choose $j_1$ even such that $m_{2j_1} > n_{2j+1}^2$ (see definition of special sequence)  and $F_1 \subseteq A$ with $\#F_1= n_{2j_1}$ such that $(y_{n_k})_{k\in F_1}$ is a $(3, 1/n_{2j_1}^2)$-R.I.S.  Then define 

\[  \phi_1= \frac{1}{m_{2j_1}} \sum_{i\in F_1} f_i \in \mathcal{K}_{\omega_1}(\mathbb{C})  \, \text{  and  }   z_1= \frac{m_{2j_1}}{n_{2j_1}} \sum_{k\in F_1} y_k
\]
observe that $w(\phi_1)= m_{2j_1}$,  $\phi_1(Tz_1) = \frac{1}{n_{2j_1} } \sum_{i\in F_1} f_i \left (  \sum_{k\in F_1} Ty_k \right )> \delta/2$ and $\phi_1(z_1)= \frac{1}{n_{2j_1}} \sum_{i\in F_1} f_i( \sum_{k\in F_1}) =0$.  Select
\[ p_1 \geq \max \{ p_{\varrho}( \supp z_1 \cup \supp Tz_1 \cup \supp \phi_1),  n_{2j+1}^2\#\supp z_1\}
\]
denote $2j_2= \sigma_{\varrho}(\phi_1, m_{2j_1}, p_1)$. Then take $F_2 \subseteq A$ with $\# F_2= n_{2j_2}$ and $F_2 > F_1$ such that $(y_k)_{k\in F_2}$ is $(3, 1/n_{2j_2}^2)- R.I.S$ and define 

\[  \phi_2= \frac{1}{m_{2j_2}} \sum_{i\in F_2} f_i \in \mathcal{K}_{\omega_1}(\mathbb{C})  \, \text{  and  }   z_2= \frac{m_{2j_2}}{n_{2j_2}} \sum_{k\in F_2} y_k
\]
So we have $\phi_1< \phi_2$, $\phi_2(Tz_2)> \delta$ and $\phi_2(z_1)= \phi_2(z_2)=0$. Pick
\begin{eqnarray*}
 &p_2& \geq \max \{ p_1, p_{\varrho}( \supp z_1\cup \supp z_2 \cup \supp Tz_1 \cup \supp Tz_2 \cup \supp \phi_1  \cup  \\
 & & \supp \phi_2), n_{2j+1}^2\#\supp z_2 \}
   \end{eqnarray*}
and set $2j_3= \sigma_{\varrho}( \phi_1, m_{2j_1}, p_1, \phi_2, m_{2j_2}, p_2)$. Continuing  with this procedure  we form a sequence $(z_1, \phi_1, \ldots , z_{n_{2j+1} }, \phi_{n_{2j+1} })$. Now we  check that  this is a $(0,j)$-dependent sequence.

\

Property  (0DS.1) is clear, because of the construction of the functionals  their weights satisfies $w(\phi_{i+1})= m_{\sigma_{\varrho}(\Phi_i)}$ where $\Phi_i=  (\phi_1, w(\phi_1), p_1, \ldots$ ,  $\phi_i, w(\phi_i), p_i)$. 

Property (0DS.2)  We proceed to the construction of the sequence $\{ \psi_{1}, \dots ,$  $ \psi_{n_{2j+1} } \}$ in $ \mathcal{K}_{\omega_1}(\mathbb{C})$ such that  $(z_{i}, \psi_{i})$ is a $(6, 2j_{i})$-exact pair  and  $w(\psi_{i})= w(\phi_{i})$ for every $1\leq i \leq n_{2j+1}$.  The other condition $\# \supp z_i \leq w(\phi_{i+1})/n_{2j+1}^2$ is already obtained by the construction of the weights. For each $z_{i}$ there exists a subset $F_{i}\subseteq A$ with $\#F_{i}= n_{{2j_{i}}}$ such that  $z_{i}= (m_{2j_{i}}/ n_{2j_{i}}) \sum_{k\in F_{i}} y_{n_{k}}$  where $(y_{n_{k}})_{k\in F_{i}}$ is a $(3, 1/n_{2j_{i}} ^{2} )$ R.I.S.  Now we follow the same arguments as in Proposition \ref{ep}. For every $k\in F_{i}$ we take $f_{n_{k}}\in \mc K_{\omega_{1}}(\mb C)$  such that $f_{n_{k}}(y_{n_{k}}) = 1$ and $f_{n_{k}}<f_{n_{k+1}}$. Then $\psi_{i} = (1/m_{2j_{i}})\sum_{k\in F_{i}}f_{n_{k}}\in  K_{\omega_{1}}(\mb C)$  and $(z_{i}, \phi_{i})$ is a   $(6, 2j_{i})$-exact pair. 

Property (0DS. 3) Let $H=(h_{1}, \ldots , h_{n_{2j+1}})$ be an arbitrary $2j+1$-special sequence.  We consider two cases: a) Suppose that $\max \supp z_{k} \leq \max \supp \phi_{k}$ for every $1\leq k\leq n_{2j+1}$. Then $\supp z_{k} \subseteq \supp \overline{\phi_{\lambda_{\Phi, H}-1}}^{p_{\lambda_{\Phi, H}-1}}$ for every $ \kappa_{\Phi, H}< k< \lambda_{\Phi, H}$. Then for the second part of (TP. 3) we obtain the desired result. (b) Suppose  that $\max \supp \phi_{k} \leq  \max \supp z_{k}$ for every $1\leq k\leq n_{2j+1}$. Then $\supp \phi_{k} \subseteq \supp \overline{ z_{\lambda_{\Phi,H}-1}}^{p_{\lambda_{\Phi, H}-1}}$ for every  $ \kappa_{\Phi, H}< k< \lambda_{\Phi, H}$, and the result follows from the first part of (TP3).

\

Fix a $(0,j)$-dependent sequence as obtained in the previous claim, and define
\[ z=(1/n_{2j+1}) \sum_{k=1}^{n_{2j+1}} z_{k}  \,   \text{ and } \, \phi= (1/m_{2j+1}) \sum_{k=1}^{n_{2j+1}}\phi_{k}.
\]

Then $\phi (Tz)= (1/n_{2j+1}) \sum_{k=1}^{n_{2j+1}} \phi_{k}(Tz)\geq \delta/m_{2j+1}$ and $\|z\|\leq 12/m_{2j+1}^{2}$. Hence, $\| Tz\|\geq \delta/m_{2j+1}\geq \delta m_{2j+1} \|z\|/12> \epsilon \|z\|$ and this completes the proof.

\end{proof}

\bibliographystyle{unsrt}

\end{document}